\newcommand{\bC}{{\mathbb C}}
\newcommand{\bP}{{\mathbb P}}
\newcommand{\bR}{{\mathbb R}}
\newcommand{\bN}{{\mathbb N}}
\newcommand{\bT}{{\mathbb T}}
\newcommand{\bZ}{{\mathbb Z}}
\newcommand{\bk}{{\mathbf k}}
\newcommand{\cF}{\mathcal F}
\newcommand{\cM}{\mathcal M}
\newcommand{\cY}{\mathcal Y}
\newcommand{\scrL}{\EuScript L}
\newcommand{\scrO}{\EuScript O}
\newcommand{\Def}{\operatorname{Def}}
\newcommand{\bfk}{\mathbf{k}}
\newcommand{\id}{\operatorname{id}}
\renewcommand{\mod}{\operatorname{mod}}
\newcommand{\Coh}{\operatorname{Coh}}
\newcommand{\Ext}{\operatorname{Ext}}
\newcommand{\Sym}{\operatorname{Sym}}
\newcommand{\Spin}{\operatorname{Spin}}
\newcommand{\dbar}{\overline{\partial}}
\renewcommand{\max}{\operatorname{max}}
\def\co{\colon\thinspace}
\numberwithin{equation}{subsection}
\newtheorem{thm}{Theorem}[section]
\newtheorem{cor}[thm]{Corollary}
\newtheorem{lem}[thm]{Lemma}
\newtheorem{lemma}[thm]{Lemma}
\newtheorem{prop}[thm]{Proposition}
\newtheorem{defin}[thm]{Definition}
\newtheorem{def-lem}[thm]{Definition-Lemma}
\theoremstyle{remark}
\newtheorem{rem}[thm]{Remark}
\newtheorem{example}[thm]{Example}
\newcommand{\superscript}[1]{\ensuremath{^{\textrm{#1}}} }
\renewcommand{\th}[0]{\superscript{th}}
\newcommand{\st}[0]{\superscript{st}}
\newcommand{\comment}[1]{}
\DeclareMathOperator*{\colim}{colim}
\title[Local HMS]{Homological Mirror Symmetry for local SYZ singularities}
\author[M.~Abouzaid]{Mohammed Abouzaid}
\author[Z.~Sylvan]{Zack Sylvan}
\date{\today} 
\thanks{The first author was supported by NSF grant DMS-1308179,  DMS-1609148, and DMS-1564172, as well as the Simons Foundation through its ``Homological Mirror Symmetry'' Collaboration grant.}
\begin{document}

\begin{abstract}
Gross and Siebert identified a class of singular Lagrangian torus fibrations which arise when smoothing toroidal degenerations, and which come in pairs that are related by mirror symmetry. We identify an immersed Lagrangian in each of these local models which supports a moduli space of objects that is isomorphic to the mirror space, and prove a homological mirror statement along the way.  
\end{abstract}

\maketitle
\tableofcontents
\section{Introduction}

For each pair $(m,n)$ of integers, consider the smooth hypersurface
\begin{equation}
X_{m,n} \equiv \{ (x_0, \ldots, x_m, y_1, \ldots, y_n) \in \bC^{m+1} \times \left( \bC^* \right)^{n} | \prod x_i = 1 + \sum y_j \},   
\end{equation}
which we equip with the restriction of the standard K\"ahler form on $\bC^{m+1} \times \left( \bC^* \right)^{n} $.
\begin{example}
For $m=0$, we obtain $(\bC^*)^n$. For $m=1$, we obtain the conic bundle
  \begin{equation}
    x_0 x_1 = 1 + \sum y_i.    
  \end{equation}
   For $n=1$, we obtain the hypersurface $\prod x_i = 1 + y $ in $ \bC^m \times \bC^*$, which is isomorphic to the complement in $\bC^m$ of the hypersurface $\prod x_i =1$.
\end{example}
It is well-known that $\left( \bC^* \right)^{n} $ is self-mirror, hence in our notation above that $X_{0,n}$ is mirror to $X_{m,0}$. The case $n=1$ or $m=1$ was extensively studied in \cite{AbouzaidAurouxKatzarkov2016}, where it was argued that a version of SYZ mirror symmetry holds for these pairs. There are by now several proofs of Homological mirror symmetry of other special cases, starting with the one given by Seidel for the Fukaya category of $X_{1,1}$ \cite{Seidel2013}, and including the one by Chan, Pomerleano and Ueda \cite{ChanPomerleanoUeda2016} for $X_{1,2}$, and recently by Pomerleano for $X_{1,n}$ \cite{Pomerleano2021} (there is also a literature on the sheaf theoretic side, which is reviewed in \cite{Gammage2021}, where the next result is also independently proved). The first result of this paper is an extension to the general case:
\begin{thm} \label{thm:HMS}
The wrapped Fukaya category of $X_{m,n}$ is equivalent to the (derived) category of coherent sheaves on $X_{n,m}$.
\end{thm}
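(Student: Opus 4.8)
The plan is to argue by induction on $m$, reducing $X_{m,n}$ to $X_{m-1,n}$ through a Lefschetz-type fibration and matching this with a parallel reduction of $\Coh(X_{n,m})$ to $\Coh(X_{n,m-1})$. The base case $m=0$ is the classical self-mirror statement for $X_{0,n} = (\bC^*)^n$: the wrapped Fukaya category is split-generated by a cotangent fiber whose endomorphism algebra is the ring of Laurent polynomials $\bC[y_1^{\pm}, \dots, y_n^{\pm}]$, and a standard argument identifies the resulting category with $\Coh((\bC^*)^n)$. This anchors the induction and simultaneously fixes the mirror dictionary on generators (fibers $\leftrightarrow$ skyscrapers, the wrapping product $\leftrightarrow$ the algebra structure). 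Since the statement is symmetric under $(m,n)\leftrightarrow(n,m)$ with the A- and B-sides exchanged, a single induction on the first index with $n$ free suffices.

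First I would set up the fibration. Projecting $X_{m,n}$ to the coordinate $x_m \in \bC$ exhibits it as a family whose fiber over $x_m = t \neq 0$ is, after rescaling $x_0 \mapsto x_0/t$, identified with $X_{m-1,n}$, and whose fiber over $x_m = 0$ is the degenerate locus $\{1 + \sum y_j = 0\} \times \bC^m$. Thus $X_{m,n}$ is a Liouville total space fibered over $\bC$ with smooth fiber $X_{m-1,n}$ and a single singular fiber at the origin, around which the monodromy is rotation in the $x_0$-direction. On the mirror, $X_{n,m}$ projects to $y_m \in \bC^*$ with fiber over $y_m = s$ isomorphic, by absorbing $s$ into the constant term, to $X_{n,m-1}$; this realizes $X_{n,m}$ as the total space of a family of copies of $X_{n,m-1}$ over $\bC^*$ whose monodromy is a line-bundle twist. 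The two reductions are mirror to each other: decreasing $m$ on the symplectic side matches decreasing the second index on the algebraic side, preserving the relation $X_{m-1,n} \leftrightarrow X_{n,m-1}$.

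The heart of the argument is a gluing (descent) formula computing $\Wrap(X_{m,n})$ from the fiber category together with the data at the singular fiber. Using the machinery of Liouville sectors and stops, the singular fiber at $x_m = 0$ contributes a stop, and I would express $\Wrap(X_{m,n})$ as built from $\Wrap(X_{m-1,n})$ by a mapping-torus construction encoding the monodromy autoequivalence, followed by a stop-removal exact triangle that fills in the origin. On the B-side the parallel statement is that $\Coh(X_{n,m})$ is recovered as a mapping torus of $\Coh(X_{n,m-1})$ under the line-bundle twist. Invoking the inductive hypothesis $\Wrap(X_{m-1,n}) \simeq \Coh(X_{n,m-1})$ and checking that the geometric monodromy corresponds to the line-bundle twist then yields the equivalence for $X_{m,n}$.

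The main obstacle is the gluing step: making precise the descent formula for the wrapped category of the fibration, including correct bookkeeping of the stop at $x_m = 0$, and proving that the proposed generators—iterated products of cotangent fibers and thimbles—actually split-generate $\Wrap(X_{m,n})$. Equally delicate is the identification of the geometric monodromy autoequivalence with the algebraic line-bundle twist as $A_\infty$-functors, not merely on objects; I expect pinning this down at the chain level to require the explicit mirror map and the immersed-Lagrangian model of \cite{AbouzaidAurouxKatzarkov2016}.
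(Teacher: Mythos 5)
Your overall instinct---decompose $X_{m,n}$, match the decomposition with one of $\Coh(X_{n,m})$, and feed in known mirror statements for the pieces---is the right one, but the specific decomposition you propose breaks down on both sides, and not for technical reasons. On the B-side, the family $y_m \co X_{n,m}\to\bC^*$ is \emph{not} a locally trivial family of copies of $X_{n,m-1}$: the fibre over $y_m=s$ is $\{\prod_{i=0}^n x_i = (1+s)+\sum_{j=1}^{m-1}y_j\}$, and the rescaling absorbing $1+s$ into the constant term exists only for $s\neq -1$. At $s=-1$ the fibre is a genuinely different variety (for $m=1$ it is the singular hypersurface $\prod x_i=0$; already for the conic bundle $X_{1,1}\to\bC^*_y$ the fibre over $y=-1$ is nodal). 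So $\Coh(X_{n,m})$ is not a mapping torus of $\Coh(X_{n,m-1})$ under a line-bundle twist, and the inductive step has nothing to match against. On the A-side the situation is dual but equally problematic: $x_m\co X_{m,n}\to\bC$ has no critical points at all (the $dy_j$-components of the differential of the defining equation are identically $-1$), the complement of the zero fibre is biholomorphic to the product $X_{m-1,n}\times\bC^*$ with trivial monodromy, and the jump from the generic fibre $X_{m-1,n}$ to the zero fibre $H_{n-1}\times\bC^m$ happens entirely at infinity. There is therefore no Lefschetz-type stop-removal triangle ``filling in the origin''; all of the gluing data lives in the contact boundary, which the mapping-torus formalism does not see.

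The paper avoids induction entirely. It presents $X_{n,m}$ (by forgetting $x_0$) as the complement of the proper transform of $\Pi_{n-1}\times(\bC^*)^m$ in the blowup of $\bC^n\times(\bC^*)^m$ along $\Pi_{n-1}\times H_{m-1}$, so that Orlov's blowup formula gives a semi-orthogonal decomposition $\langle D^b\Coh(\Pi_{n-1}\times H_{m-1}),\, D^b\Coh(\bC^n\times(\bC^*)^m)\rangle$ and passing to the complement is a localization. Symplectically, $X_{m,n}$ is glued from two Liouville sectors completing to $H_{n-1}\times\bC^{m+1}$ and $(\bC^*)^{n+m}$ along the common stop $H_{n-1}\times(\bC^*)^m$; Sylvan's gluing theorem produces the matching semi-orthogonal decomposition, and stop removal is the matching localization. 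The inputs are mirror symmetry for the three pieces (Lekili--Polishchuk for the pair of pants, together with the toric and K\"unneth statements for the other factors) and the compatibility of the cup functors with pullback and pushforward (Proposition \ref{prop:comparison_restriction_functors}). If you want a one-variable-at-a-time argument, you would have to replace your fibration by a sectorial gluing along a hypersurface of this kind, at which point the induction is no longer doing any work.
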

Since $X_{n,m}$ is affine, the category of coherent sheaves is equivalent to the category of modules over the corresponding commutative ring. This means that we expect the existence of a Lagrangian in $X_{m,n}$ whose wrapped Floer cohomology is isomorphic to this ring; this turns out to be the positive real locus. The above theorem thus follows from a computation of the wrapped Floer cohomology of this Lagrangian, and a proof that it generates the Fukaya category.

\begin{rem}
  With some modification, we can extend our setup to the case where  $m$ is allowed to equal $-1$, which would then include the statement that the Fukaya category of the hypersurface $1 + \sum y_i = 0$ in $(\bC^*)^n$ (i.e. Mikhalkin's pair of pants \cite{Mikhalkin2004}) is equivalent to the derived category of coherent sheaves on the hypersurface $\prod x_i = 0$ in $\bC^n$ (see \cite{LekiliPolishchuk2020}). We shall in fact use this result as a building block of our proof.   
\end{rem}

The main goal of this paper is to obtain an understanding of mirror symmetry for these manifolds simultaneously at categorical (HMS) and the geometric (SYZ) levels: we thus seek to realise all skyscraper sheaves of points on one side as arising from (immersed) Lagrangians on the other side equipped with bounding cochains.

Our starting point is a variant of mirror symmetry for pairs of pants in dimension $1$ (c.f. \cite{AbouzaidAurouxEfimovKatzarkovOrlov2013}): the hypersurface
\begin{equation}
  H_{1} \equiv \{ 1 + y_1 + y_2 = 0\} \subset \left(\bC^*\right)^2  
\end{equation}
is mirror to the union of the coordinate lines in $\bC^2$.
Following Seidel's work \cite{Seidel2011}, we consider an immersed Lagrangian $L_1$ in the pair of pants, shown in Figure \ref{fig:Seidel-Lagrangian} as the complement of the origin and the point $1$ in $\bC$.
\begin{figure}
  \centering
  \begin{tikzpicture}
    \begin{scope}
      \node[label = below:{$0$}] (0) at (0,0) {$\ast$};
      \node[label = below:{$1$}] (1) at (2,0) {$\ast$};
      \node[label = left:{$s$}] (s) at (-1,0) {};
      
      \node[label = below:{$t$}] (t) at (1,0) {};
      \node[label = right:{$c$}] (c) at (3,0) {};
      \draw[thick] (3,0) .. controls (3.25,1) and (2,2) .. (1,2) .. controls (0,2) and (-1.25,1) .. (-1,0) .. controls (-.75,-1) and (.75, -1) ..  (1,0) .. controls (1.25,1)  and (2.75,1) .. (3,0);
  \end{scope}
  \begin{scope}[yscale = -1] 
    \draw[thick] (3,0) .. controls (3.25,1) and (2,2) .. (1,2) .. controls (0,2) and (-1.25,1) .. (-1,0) .. controls (-.75,-1) and (.75, -1) ..  (1,0) .. controls (1.25,1)  and (2.75,1) .. (3,0);
  \end{scope}
  \draw [fill=blue] (s) circle (0.1);
  \draw [fill=blue] (t) circle (0.1);
  \draw [fill=orange] (c) circle (0.1);
\end{tikzpicture}
  \caption{Seidel's immersed Lagrangian}
  \label{fig:Seidel-Lagrangian}
\end{figure}
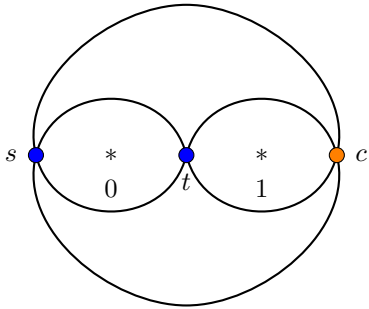
As we discuss in Section \ref{sec:mirrors-points}, by identifying an open subset of $X_{m,n}$ which is biholomorphic to $ (\bC^{*})^{m} \times H^n_1 $  we obtain from Seidel's construction a Lagrangian
\begin{equation}
  L_{m,n} \cong T^m \times L_1^n \subset   (\bC^{*})^{m} \times H^n_1 \subset X_{m,n}.
\end{equation}

Our main result is the following:
\begin{thm} \label{thm:Family-HMS}
  Every skyscraper sheaf of a point in $X_{n,m}$ is mirror to an object of the Fukaya category of $X_{m,n}$ supported on $L_{m,n}$.
\end{thm}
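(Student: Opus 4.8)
The plan is to read the statement through the equivalence of Theorem~\ref{thm:HMS}. That theorem is proved by exhibiting the positive real locus $L^{+}\subset X_{m,n}$ as a generator of $\Wrap(X_{m,n})$ whose wrapped endomorphism ring is the coordinate ring $R$ of the affine variety $X_{n,m}$. Under this dictionary, an object $\cE$ of the Fukaya category is mirror to the skyscraper $\mathcal{O}_{p}$ at a point $p\in X_{n,m}$ if and only if the $R$-module $HW^{*}(L^{+},\cE)$ is one-dimensional, concentrated in degree $0$, with $R$ acting through evaluation at $p$. The theorem therefore amounts to the following: for every deformation datum $b$ on $L_{m,n}$ (a bounding cochain together with a flat $\bC^{*}$-local system), the module $HW^{*}(L^{+},(L_{m,n},b))$ is such a skyscraper module, and the resulting assignment $b\mapsto p(b)$ is onto $X_{n,m}$.

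First I would organize the deformation theory of $L_{m,n}$ along its product structure $L_{m,n}\cong T^{m}\times L_{1}^{n}$ inside the open subset $(\bC^{*})^{m}\times H_{1}^{n}$. The torus factors are unobstructed, and their flat $\bC^{*}$-connections form the self-mirror $(\bC^{*})^{m}$, contributing the $y$-variables of $X_{n,m}$. Each pair-of-pants factor is governed by Seidel's immersed Lagrangian $L_{1}$, whose two self-intersection points $s,t$ (see Figure~\ref{fig:Seidel-Lagrangian}) give a two-parameter bounding cochain $b=\alpha X_{s}+\beta X_{t}$. Computed inside $H_{1}$ alone, the Maurer--Cartan equation reads $\alpha\beta=0$ and its solution set is exactly the mirror of $H_{1}$, namely the union of coordinate axes in $\bC^{2}$; this is the building block recorded in the Remark following Theorem~\ref{thm:HMS}, and it matches $(L_{1},b)$ with the skyscraper at the corresponding point of that nodal curve.

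The crux --- and the step I expect to be the main obstacle --- is that $L_{m,n}$ is deformed inside all of $X_{m,n}$, not inside the open piece. The holomorphic disks with boundary on $L_{m,n}$ that escape $(\bC^{*})^{m}\times H_{1}^{n}$ are not a nuisance to be excluded; they are precisely what deforms the local Maurer--Cartan equations, smoothing the product of nodal curves into the smooth affine variety $X_{n,m}$. Concretely, I would show that the corrected Maurer--Cartan equation for $(L_{m,n},b)$ is the defining equation $\prod x_{i}=1+\sum y_{j}$ of $X_{n,m}$, with the components of $b$ playing the roles of the coordinates $x_{i}$ and the flat connections the roles of the $y_{j}$; the constant $1$ and the terms $\sum y_{j}$ on the right-hand side are to be produced by an explicit count of the escaping disks. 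The already-verified case $m=0,n=1$ illustrates the mechanism: there $L_{0,1}=L_{1}$, the local equation $\alpha\beta=0$ is corrected to $\alpha\beta=1$, and its solution set $\{x_{0}x_{1}=1\}\cong\bC^{*}$ is indeed $X_{1,0}$. Carrying out this disk count in general, and checking that the $n$ local node relations assemble into the single global relation with exactly the right right-hand side, is the heart of the argument; I would approach it by a degeneration/neck-stretching analysis compatible with the restricted K\"ahler form, together with careful bookkeeping of how the self-intersection generators of the factors couple in the product.

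Finally, granting the above, the pairing computation splits factor by factor, so $HW^{*}(L^{+},(L_{m,n},b))$ is one-dimensional in degree $0$ and the $R$-action reads off the point $p(b)$; as $b$ ranges over the (smoothed) Maurer--Cartan moduli, $p(b)$ sweeps out every point of $X_{n,m}$. This exhibits each $\mathcal{O}_{p}$ as the mirror of an object supported on $L_{m,n}$ and, as a byproduct, identifies the moduli of such objects with the mirror space $X_{n,m}$, as announced in the abstract.
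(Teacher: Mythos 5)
Your overall scaffolding is right in several places---probing against the zero section $L^{+}$, exploiting the product structure $L_{m,n}\cong T^{m}\times L_{1}^{n}$, restricting to the deformation subspace spanned by $\partial_{s}$ and $\partial_{t}$, and recognizing that the discs meeting the divisor $D$ are what glue the $n$ local mirrors into $X_{n,m}$---but the central mechanism you propose is not the one that operates here, and as stated it would lead you to compute the wrong thing. You predict that the escaping discs \emph{correct the Maurer--Cartan equation} so that it becomes $\prod x_{i}=1+\sum y_{j}$. In fact the paper proves (Lemma \ref{lem:curvature_vanishes}) that the Maurer--Cartan function vanishes \emph{identically} on the distinguished subspace $\mathrm{Def}_{m,n}\cong(\bk^{*})^{m}\times\bk^{2n}$: the contributions of the relevant discs (classified in Proposition \ref{prop:m0-disc-classification} by projecting to $\bC^{m}\times\bC^{n}$, where they split into at most two consecutive nonconstant factors) cancel in pairs because of the bounding $\Spin$ structure. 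So \emph{every} point of $\mathrm{Def}_{m,n}$ gives an object; the mirror equation arises instead as the condition that this object be non-trivial, concretely as the vanishing of the Floer differential against $L^{+}$ (Lemma \ref{lem:computation-differential}), which yields $u_{j}v_{j}=u_{j+1}$ and $u_{1}\prod v_{j}=1+\sum\rho_{i}$. Your $m=0$, $n=1$ sanity check happens to give the same locus $\{uv=1\}$ either way, which is why the error is easy to miss, but the two statements are logically different: one cuts down the set of objects, the other cuts down the set of non-zero objects.

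Two further gaps. First, when the differential does vanish, $HF^{*}(L^{+},(L_{m,n},b))$ is the tensor product of $n$ two-term complexes with zero differential, hence has rank $2^{n}$ spread over degrees $0$ through $n$---not rank one in degree $0$ as your last paragraph asserts. The object is a direct sum of $2^{n}$ (shifted) skyscrapers, and one must pass to the idempotent completion and split off the degree-zero summand (this is the role of the small module-theoretic lemma preceding Equation \eqref{eq:mirror-map}). Second, you assert that ``the $R$-action reads off the point $p(b)$,'' but the identification $\cM_{m,n}\cong X_{n,m}$ of Corollary \ref{cor:moduli_object_equivalent_to_mirror} is at that stage only an abstract isomorphism of varieties; to know that the object is mirror to the skyscraper at the \emph{corresponding} point, one must show the closed-open map $SH^{0}(X_{m,n})\to\Gamma(\cM_{m,n})$ realizes this isomorphism. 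The paper does this in Proposition \ref{prop:mirror_map_iso} by lifting each coordinate function $u_{1}$, $v_{j}$, $\rho_{i}^{\pm1}$ to $SH^{0}$ via a tailored partial compactification and a disc count through the divisor at infinity (Lemma \ref{lem:lift_elements_to_SH}), then using a Noetherian argument to upgrade surjectivity to an isomorphism. Your proposal has no substitute for this step, and without it the theorem as stated is not proved.
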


\begin{rem}
The construction of $L_{m,n}$ is not unique. As the reader may see by consulting Equation \eqref{eq:projection_qj}, the embedding of $ (\bC^{*})^{m} \times H^n_1 $ that we use depends on an ordering of the coordinates. One can construct a more symmetric immersed Lagrangian as follows:  consider the projection $\pi \co X_{m,n} \to \bC$ given by $\prod_{i=0}^{m} x_i$, which equals $1 + \sum_{j=1}^{n} y_j $ by definition.  The fibre at a point $\lambda$ is the product of the hypersurfaces $\prod_{i=0}^{m} x_i = \lambda$ and $\sum_{j=1}^{n} y_j  = \lambda -1 $.  Since both function are equivariant, we see that the only critical points are $0$ and $1$, and that the fibres away from these points are equivalent to the product of a torus $\left(\bC^{*}\right)^m$ with an $n-1$ dimensional pair of pants.   In \cite{Sheridan2011}, Sheridan constructed an immersed Lagrangian in $H_{n-1}$, which contains all the information about the mirror. We expect that it is possible to prove an analogue of Theorem \ref{thm:Family-HMS} for the Lagrangian obtained by parallel transport of the product of $T^m$ with Sheridan's Lagrangians, along the curve in the base given by Figure \ref{fig:Seidel-Lagrangian}. 
\end{rem}

The most delicate part of Theorem \ref{thm:Family-HMS} is to make sense of the Floer theory of $L_{m,n}$, because it bounds holomorphic discs. The space of objects of the Fukaya category supported on an immersed Lagrangian arises as the zero locus of a function (the Maurer-Cartan function) on the \emph{deformation space}, which can be described from the product of the space of local systems (a copy of $\left(\bk^{*}\right)^{m+n}$), together with a coordinates for each self-intersection of degree $1$. The constraint on the Maurer-Cartan function enforces the vanishing of the weighted count of holomorphic discs with one output.

Instead of studying the full deformation space, which is quite large in our situation, we will consider a distinguished subspace which is a product of $\left(\bk^{*}\right)^{m} $ (corresponding to the $T^m$ factors) with $ \bk^{2n}$ (corresponding to the self intersection points $s$ and $t$ shown in Figure \ref{fig:Seidel-Lagrangian}). It will turn out that the Maurer-Cartan function vanishes identically on this subspace, but we will see the equation of the mirror arise from the equation required for the resulting object to be non-trivial. A difficulty is caused by the fact that these objects correspond to a direct sum of $2^{n}$ copies of skyscraper sheaves, requiring us to pass to the idempotent closure of the Fukaya category in order to obtain the desired correspondence. 
\begin{rem} \label{rem:discussion_mirror_tori}
  It is illuminating to consider the mirror correspondence for some other Lagrangians as well. Let $L_1^{s}$ and $ L_1^{t}$ denote either component of the Lagrangians shown in Figure \ref{fig:Chekanov-Clifford-Lagrangian} (more precisely, we should take the exact representatives of these Lagrangians), which are obtained from $L_1$ by surgery at the intersection points labelled $s$ and $t$. For each $0 \leq i \leq n$, we can then consider the Lagrangians
  \begin{equation}
    L_{m,n}^{i} \equiv T^{m} \times (L_1^s)^{i} \times (L_1^t)^{n-i}.
  \end{equation}
  The analogous result to Theorem \ref{thm:Family-HMS} is that each skyscraper sheaf of a point of $X_{m,n}$ with the property that the coordinates $x_k$ for $k \neq i$ do not vanish, admits a mirror which is supported on $L_{m,n}^i$ (c.f. Remark \ref{rem:proof_mirror_tori}). 

One can more directly describe the Hamiltonian isotopy class of these Lagrangians as follows: assume that $i \neq 0$, and consider a torus in $H_{n-1}$ with $1 \sim |y_i|$, and $|y_k| \ll 1$ for $k \neq i$ (more precisely, under a tropical deformation of the equation for $H_{n-1}$), which we may assume to be an exact Lagrangian by an appropriate deformation for the primitive. The parallel transport of the product of this Lagrangian with the real torus in $(\bC^*)^{m}$ along the curve $ L_1^t $ in the base corresponds to $L_{m,n}^i $.  For $i=0$, one can use the fact that $\left(\bC^*\right)^{n}$ retracts to the fibre of $1 + \sum y_i$ over a disc bounding $L_1^s$, to see that the parallel transports of these Lagrangians along $L_1^s$ are Hamiltonian isotopic, and correspond to the Lagrangian $L_{m,n}^{0}$.

\end{rem}

\begin{figure}
  \centering
  \begin{tikzpicture}
    \begin{scope}[xscale = -1]
      \node[label = right:{$1$}] (0) at (0,0) {$\ast$};
      \node[label = right:{$0$}] (1) at (2,0) {$\ast$};
     \node[label = above:{$L_1^s$}] (s) at (2,1.5) {};
      \draw[thick] (2.75,0.5) .. controls (3.25,0.5) and (2,2) .. (1,2) .. controls (0,2) and (-1.25,1) .. (-1,0) .. controls (-.75,-1) and (.75, -1) ..  (1,0) .. controls (1.25,1)  and (2.5,.5) .. (2.75,0.5);
      \begin{scope}[yscale = -1] 
    \draw[thick] (2.75,0.5) .. controls (3.25,0.5) and (2,2) .. (1,2) .. controls (0,2) and (-1.25,1) .. (-1,0) .. controls (-.75,-1) and (.75, -1) ..  (1,0) .. controls (1.25,1)  and (2.5,.5) .. (2.75,0.5);
  \end{scope}
        \end{scope}
     
        \begin{scope}[shift={(4,0)}]
            \node[label = right:{$0$}] (0) at (0,0) {$\ast$};
            \node[label = right:{$1$}] (1) at (2,0) {$\ast$};
                 \node[label = above:{$L_1^t$}] (t) at (2,1.6) {};
      \draw[thick] (3,0) .. controls (3.25,1) and (2,2) .. (1,2) .. controls (0,2) and (-1.25,1) .. (-1,0) .. controls (-.75,-1) and (.75, -.25) ..  (1,-.25) .. controls (1.25,-.25)  and (2.75,-1) .. (3,0);
       \begin{scope}[yscale = -1] 
    \draw[thick] (3,0) .. controls (3.25,1) and (2,2) .. (1,2) .. controls (0,2) and (-1.25,1) .. (-1,0) .. controls (-.75,-1) and (.75, -.25) ..  (1,-.25) .. controls (1.25,-.25)  and (2.75,-1) .. (3,0);
  \end{scope}
  \end{scope}
\end{tikzpicture}
  \caption{The Lagrangians $L^s_1$ and $L^t_1$.}
  \label{fig:Chekanov-Clifford-Lagrangian}
\end{figure}
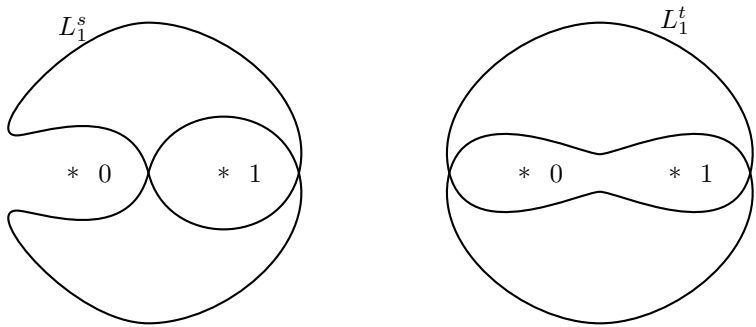

\subsection*{Acknowledgements}

We would like to thank Sasha Efimov and Alex Perry for help with the results of Section \ref{sec:gluing-coher-sheav}, and Yanki Lekilli for useful correspondence about the results of \cite{LekiliPolishchuk2020}.

\section{Mirrors to points}
\label{sec:mirrors-points}
In this section, we construct in each manifold $X_{m,n}$ a Lagrangian $L_{m,n}$, which will play a key role in our SYZ interpretation of mirror symmetry in the sense that we will identify a subspace of its space of non-trivial bounding cochains with a copy of the mirror space.

\subsection{Divisors and iterated fibrations on $X_{m,n}$}
\label{sec:iter-fibr-x_m}

Recall that
\begin{equation}
X_{m,n} \equiv \{ (x_0, \ldots, x_m, y_1, \ldots, y_n) \in \bC^{m+1} \times \left( \bC^* \right)^{n} | \prod x_i = 1 + \sum y_j \}.    
\end{equation}
We introduce a collection of maps
\begin{equation}
  q_j \co  X_{m,n} \to \bC 
\end{equation}
indexed by $1 \leq j < n$, and which are expressed in terms of the $\bC^*$-coordinates via the formula 
\begin{equation} \label{eq:projection_qj}
  q_j(x_0, \ldots, x_m, y_1, \ldots, y_n) = 1 + y_1/y_{j+1} + \cdots + y_{j}/y_{j+1}.
\end{equation}
We define $q_{n}$ to be the projection map to $\bC$ discussed in the introduction. We note that this last map factors through the projection map to $\bC^{m+1}$:
\begin{equation}
  \begin{tikzcd}
     X_{m,n}  \ar[r] & \bC^{m+1} \ar[r] & \bC.
  \end{tikzcd}
\end{equation}

We introduce the divisor $D_j \subset X_{m,n}$, given as the inverse image of $1$ under $q_j$:
\begin{equation}
  D_j \equiv q_j^{-1}(1).  
\end{equation}
\begin{lem}
  \label{lem:iterative_description_divisors}
  The divisor $D_j$ agrees with the inverse image of $0$ under $q_{j-1}$.
\end{lem}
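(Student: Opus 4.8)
The plan is to compute both subsets explicitly in terms of the $\bC^{*}$-coordinates $y_1, \ldots, y_n$ and to observe that they are cut out by the same equation, namely $y_1 + \cdots + y_j = 0$. The single fact that makes this work is that on $X_{m,n}$ the coordinates $y_1, \ldots, y_n$ take values in $\bC^{*}$, so they never vanish and denominators may be cleared freely. Here $j$ ranges over $2 \leq j \leq n$, which is exactly the range in which $q_{j-1}$ is defined.

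First I would unwind the definition of $D_j = q_j^{-1}(1)$. For $1 \leq j < n$ the formula \eqref{eq:projection_qj} gives $q_j = 1 + (y_1 + \cdots + y_j)/y_{j+1}$, so the condition $q_j = 1$ reads $(y_1 + \cdots + y_j)/y_{j+1} = 0$; since $y_{j+1} \in \bC^{*}$ this is equivalent to
\begin{equation}
  y_1 + \cdots + y_j = 0.
\end{equation}
Next I would unwind $q_{j-1}^{-1}(0)$. Assuming $j \geq 2$, so that $q_{j-1}$ is given by \eqref{eq:projection_qj}, the equation $q_{j-1} = 0$ reads $1 + (y_1 + \cdots + y_{j-1})/y_j = 0$; multiplying through by $y_j \in \bC^{*}$ turns this into $y_j + y_1 + \cdots + y_{j-1} = 0$, which is again $y_1 + \cdots + y_j = 0$. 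Comparing the two computations shows that $D_j$ and $q_{j-1}^{-1}(0)$ are the same subset of $X_{m,n}$.

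Finally I would treat the top value $j = n$ separately, since there $q_n$ is the projection $\prod_i x_i$ rather than an instance of \eqref{eq:projection_qj}. Using the defining relation $\prod_i x_i = 1 + \sum_{k=1}^{n} y_k$ of $X_{m,n}$, the condition $q_n = 1$ becomes $\sum_{k=1}^{n} y_k = 0$, matching the pattern above, while $q_{n-1}^{-1}(0)$ still reduces to $\sum_{k=1}^{n} y_k = 0$ by the second computation with $j = n$. I do not expect any genuine obstacle here: the only points to keep track of are the nonvanishing of $y_j$ and $y_{j+1}$ (needed to pass between the ``ratio equals zero'' and ``numerator equals zero'' forms of the equations) and the special definition of $q_n$ at the end of the range.
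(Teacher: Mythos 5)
Your proof is correct and is essentially the same computation as the paper's: both identify $D_j$ with the locus $y_1+\cdots+y_j=0$ and observe that dividing by the nonvanishing coordinate $y_j$ converts this into the vanishing of $q_{j-1}$. Your version is merely more explicit about the endpoint case $j=n$, where $q_n$ is defined via the projection $\prod_i x_i$ rather than Equation \eqref{eq:projection_qj}.
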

\begin{proof}
The divisor $D_j$ is the solution to the equation $\sum_{j=1}^{j} y_j = 0$, which holds if and only the quotient by $y_j$ vanishes, which gives the $0$ locus of $q_{j-1}$.
\end{proof}
The above Lemma suggests introducing an additional divisor
\begin{equation}
  D_{n+1} \equiv q^{-1}_{n}(0),  
\end{equation}
which is the normal crossing divisor $\prod x_i = 0$.

Let $D$ denote the union of the divisors $\{D_j\}_{j=1}^{n+1}$. For the next result, it is convenient to recall that the function $q_n$ is the product of the coordinates $x_i$:
\begin{lem}
  \label{lem:product_decompositions}
  The maps $(x_1, \dotsc, x_m, q_1, \dotsc, q_n)$ define coordinates on  $X_{m,n} \setminus D$ identifying this space with a product of $(\bC^{*})^{m}$ with ($1$-dimensional) pairs of pants:
  \begin{equation}
    \begin{tikzcd}
      X_{m,n} \setminus D \ar[r,"\cong"] \ar[d] &  (\bC^{*})^{m} \times H^n_1  \ar[d] \\
      X_{m,n} \ar[r] & \bC^{m} \times \bC^{n}.
    \end{tikzcd}
  \end{equation}
  \qed
\end{lem}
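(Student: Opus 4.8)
The plan is to exhibit the map $\Phi = (x_1, \dots, x_m, q_1, \dots, q_n)$ as a biholomorphism onto $(\bC^*)^m \times H_1^n$ by writing down an explicit holomorphic inverse; once this is done the commutativity of the square is automatic, since the top horizontal arrow is by construction the restriction of the coordinate map $X_{m,n} \to \bC^m \times \bC^n$ appearing along the bottom. First I would record what passing to the complement of $D$ removes: the divisor $D_{n+1} = \{\prod x_i = 0\}$ forces $x_0, \dots, x_m \in \bC^*$, while, using Lemma \ref{lem:iterative_description_divisors}, the locus $q_j = 1$ is $D_j$ and the locus $q_j = 0$ is $D_{j+1}$, so on $X_{m,n}\setminus D$ each $q_j$ takes values in $\bC\setminus\{0,1\}$. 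Identifying the $j$-th factor $H_1$ with $\bC\setminus\{0,1\}$ through the coordinate $q_j$, this shows $\Phi$ indeed lands in $(\bC^*)^m\times H_1^n$.

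Next I would invert the $q$-coordinates in terms of the $y$-coordinates. Writing $z_j = y_1 + \cdots + y_j$ for the partial sums (so $z_0 = 0$ and $y_j = z_j - z_{j-1}$), the defining formula \eqref{eq:projection_qj} rearranges to
\begin{equation}
  q_j = \frac{z_{j+1}}{z_{j+1} - z_j} \quad (1 \le j < n), \qquad q_n = 1 + z_n.
\end{equation}
Solving the first relation for $z_j$ and reading off $z_n$ from the second yields the backward recursion
\begin{equation}
  z_n = q_n - 1, \qquad z_j = \frac{q_j - 1}{q_j}\, z_{j+1} \quad (1 \le j < n),
\end{equation}
which is well defined precisely because $q_j \neq 0$ off $D_{j+1}$. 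Together with $x_0 = q_n / \prod_{i=1}^m x_i$, this reconstructs the full tuple $(x_0, \dots, x_m, y_1, \dots, y_n)$ as rational — hence holomorphic — functions of the target coordinates, providing a candidate inverse to $\Phi$.

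The main thing to verify, and the only genuine obstacle, is that this candidate inverse actually lands in $X_{m,n}\setminus D$, i.e.\ that the reconstructed point has all $y_j \neq 0$ and lies off $D$. From the recursion one computes $y_{j+1} = z_{j+1}/q_j$ and $z_j = (q_n - 1)\prod_{k=j}^{n-1}\frac{q_k - 1}{q_k}$, so that $z_j \neq 0$, and hence $y_j \neq 0$, exactly when $q_n \neq 1$ and $q_k \neq 1$ for the relevant $k$; these are precisely the conditions cutting out the complement of $\bigcup_j D_j$. Likewise $q_n \neq 0$ (off $D_{n+1}$) gives $x_0 \neq 0$, and the defining constraint $\prod_{i=0}^m x_i = q_n = 1 + \sum_j y_j$ holds by construction. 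Thus the reconstructed point lies in $X_{m,n}\setminus D$, the two maps are mutually inverse holomorphic maps, and $\Phi$ is the asserted biholomorphism. I expect the delicate point to be purely this bookkeeping of nonvanishing loci — matching each condition $q_j \in \{0,1\}$ or $x_i = 0$ with the appropriate component of $D$ — while the algebra underlying the recursion is a short telescoping computation.
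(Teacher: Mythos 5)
Your proposal is correct. The paper offers no proof at all for this lemma (it is stated with an immediate \qed), so there is nothing to compare against; your explicit inversion — observing that off $D$ each $q_j$ avoids $\{0,1\}$ and each $x_i$ avoids $0$, and then reconstructing the point via $z_n = q_n - 1$, $z_j = \tfrac{q_j-1}{q_j}z_{j+1}$, $y_{j+1} = z_{j+1}/q_j$, $x_0 = q_n/\prod_{i\ge 1} x_i$, with the nonvanishing of each denominator and each $y_j$ matched to the removal of the corresponding component of $D$ — is exactly the routine verification the authors left to the reader, and your bookkeeping of the loci $q_j \in \{0,1\}$ against the components $D_j$ and $D_{j+1}$ is accurate.
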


This product decomposition allows us to construct our desired Lagrangian $L_{m,n}$ as a product
\begin{equation}
  L_{m,n} \cong T^m \times L_{1}^n,  
\end{equation}
where $L_1$ is Seidel's Lagrangian in the pair of pants, and $T^m \subset  (\bC^{*})^{m}$ is the locus where the norm of the coordinates is $1$. As in \cite{Seidel2011}, we equip $L_1$ (and the standard circle in $ \bC^{*}$) with the bounding $\Spin$ structure, which determines a $\Spin$ structure on $L_{m,n}$.

\begin{rem}
We have used the ordering of the coordinates to construct this splitting. The Lagrangian $L_{m,n}$ depends on this splitting. However, the outcome of our work will imply that the Maurer-Cartan spaces which we will associate to these Lagrangians are canonically isomorphic, because they can be identified with the spectrum of the commutative ring $SH^0(X_{m,n})$.
\end{rem}

\subsection{Symplectic and holomorphic volume forms}
\label{sec:sympl-holom-volume}

Since $X_{m,n}$ is an affine variety, it is equipped with a Liouville structure which is canonical up to symplectomorphism \cite{EliashbergGromov1991,SeidelSmith2005}, and is induced by pulling back the standard symplectic form in $\bC^{N}$ under a proper holomorphic embedding. On any open subset of $X_{m,n}$, we may arrange for the symplectic form to agree with a preferred choice, and we shall use this flexibility to ensure that it agrees with the product symplectic form on $ (\bC^{*})^{m} \times H^n_1 $ away from a neighbourhood of the divisor $D$. In this way, $L_{m,n}$ is evidently a Lagrangian submanifold, which is exact for an appropriate choice of primitive.

The natural holomorphic volume form on $X_{m,n}$ is given by
\begin{equation} \label{eq:holomorphic_volume_form}
\Omega_{m,n} \equiv \frac{  dx_0 \wedge \cdots \wedge dx_{m} \wedge d \log(y_1) \wedge \cdots \wedge d \log (y_n) }{d \left( \prod x_i - \sum y_j \right) ,}
\end{equation}
which by definition satisfies
\begin{equation}
d \left( \prod x_i - \sum y_j \right)\wedge\Omega_{m,n} = dx_0 \wedge \cdots \wedge dx_{m} \wedge d \log(y_1) \wedge \cdots \wedge d \log (y_n)
\end{equation}
in the normal bundle to $X_{m,n}$. For the next statement, we identify $H_1$ with the complement of the origin in $\bC \setminus \{1\}$, which we equip with the holomorphic volume form
\begin{equation}
  \Omega_1 \equiv \frac{dw}{w-1} = d\log(w-1).
\end{equation}
\begin{lem}
  The restriction of $\Omega_{m,n}$ to the complement of $D$ agrees with the wedge product of the forms $d\log x$ on the $\bC^*$ factors with the forms $\Omega_{1}$ on the $H_1$ factors.
\end{lem}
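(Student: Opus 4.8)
The plan is to avoid evaluating the quotient in \eqref{eq:holomorphic_volume_form} directly, and instead to verify the defining property of the Poincar\'e residue. Writing $F = \prod_i x_i - 1 - \sum_j y_j$ and $\nu = dx_0 \wedge \cdots \wedge dx_m \wedge d\log y_1 \wedge \cdots \wedge d\log y_n$ for the numerator, the form $\Omega_{m,n}$ is characterised on $X_{m,n}$ by the identity $dF \wedge \Omega_{m,n} = \nu$ in the normal bundle, and this determines $\Omega_{m,n}$ uniquely, since wedging with $dF$ annihilates any ambiguity in the ambient extension. It therefore suffices to take the candidate
\[
  \Omega' \equiv d\log x_1 \wedge \cdots \wedge d\log x_m \wedge \bigwedge_{j=1}^{n} d\log(q_j - 1),
\]
where for each $j$ I use the expression for $q_j - 1$ as a function of the $\bC^*$-coordinates $y$ alone (so $q_j - 1 = (y_1 + \cdots + y_j)/y_{j+1}$ for $j < n$, and $q_n - 1 = \sum_j y_j$ via the hypersurface equation), and to check that $dF \wedge \Omega' = \nu$. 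Since $\Omega'$ is by construction the wedge of the forms $d\log x_i$ on the $(\bC^*)^m$ factor with the pullbacks of $\Omega_1 = d\log(w-1)$ under $w = q_j$ on the $H_1$ factors, this proves the lemma.

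Next I would separate the $x$- and $y$-contributions. With $P = \prod_i x_i$ one has $dF = P\sum_{i=0}^m d\log x_i - \sum_j dy_j$. Wedging against $A \equiv d\log x_1 \wedge \cdots \wedge d\log x_m$ kills every term $d\log x_i$ with $i \geq 1$, leaving
\[
  dF \wedge A = \textstyle\bigwedge_{i=0}^m dx_i - \left(\sum_j dy_j\right)\wedge A,
\]
where I used $P\, d\log x_0 \wedge A = P\bigwedge_{i=0}^m d\log x_i = \bigwedge_{i=0}^m dx_i$. Writing $B \equiv \bigwedge_{j=1}^n d\log(q_j-1)$, a form in the $dy_j$ alone, and wedging on the right by $B$, the term $\left(\sum_j dy_j\right)\wedge A \wedge B$ vanishes because $\left(\sum_j dy_j\right)\wedge B$ is a $y$-form of degree $n+1$ in the $n$ variables $y_1,\dots,y_n$. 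Hence $dF \wedge \Omega' = \bigwedge_{i=0}^m dx_i \wedge B$, and the lemma reduces to the purely one-dimensional identity $B = d\log y_1 \wedge \cdots \wedge d\log y_n$.

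The hard part is this last identity, which I would prove by induction on $n$. Setting $S_j = y_1 + \cdots + y_j$, $u_j = d\log S_j$ and $v_j = d\log y_j$, the defining formulas give $d\log(q_j - 1) = u_j - v_{j+1}$ for $j < n$ and $d\log(q_n - 1) = u_n$, so that
\[
  B = (u_1 - v_2) \wedge (u_2 - v_3) \wedge \cdots \wedge (u_{n-1} - v_n) \wedge u_n .
\]
The key relation is $S_j u_j = S_{j-1} u_{j-1} + y_j v_j$, obtained from $dS_j = dS_{j-1} + dy_j$; it yields $u_n = (S_{n-1} u_{n-1} + y_n v_n)/S_n$, and a short expansion (using $S_{n-1} + y_n = S_n$) gives $(u_{n-1} - v_n)\wedge u_n = u_{n-1}\wedge v_n$. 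Substituting this telescopes the last two factors and shows $B_n = B_{n-1} \wedge v_n$, where $B_k$ denotes the analogous product for the first $k$ variables. Since $S_1 = y_1$ gives $u_1 = v_1$ and hence $B_1 = v_1$, induction yields $B = v_1 \wedge \cdots \wedge v_n$, completing the argument. I expect the bookkeeping in this induction --- in particular checking that each telescoping step produces no spurious factor, and that the $j = n$ term is consistently handled through the hypersurface equation (so that replacing $\prod x_i - 1$ by $\sum_j y_j$ only changes $\Omega'$ by a multiple of $dF$, invisible after wedging with $dF$) --- to be the only genuinely delicate point; the remainder is formal manipulation of the residue.
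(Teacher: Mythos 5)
Your proposal is correct and follows essentially the same route as the paper: both verify the defining relation $dF\wedge \Omega' = \nu$ for the candidate product form, observe that the $\sum_j dy_j$ term dies against the $n$ forms in the $y$-variables, and reduce to the identity $\bigwedge_j d\log(q_j-1) = \bigwedge_j d\log y_j$. The only (cosmetic) difference is that the paper evaluates this last wedge product as a determinant simplified by Gaussian elimination, whereas you telescope it by induction via $S_j\,d\log S_j = S_{j-1}\,d\log S_{j-1} + y_j\,d\log y_j$ --- the same computation organized as successive column operations.
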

\begin{proof}
  Note first that
  \begin{equation}
  \begin{aligned}
  d \left( \prod x_i - \sum y_j \right) \wedge d\log x_1 \wedge\dotsb&\wedge d\log x_m \,\wedge\, \Omega_{1,1} \wedge\dotsb\wedge \Omega_{1,n}\\
  & =
  dx_0 \wedge\dotsb\wedge dx_m \wedge \Omega_{1,1} \wedge\dotsb\wedge \Omega_{1,n},
  \end{aligned}
  \end{equation}
  since the $\Omega_{1,j}$ terms depend only on the $y$ coordinates and there are already $n$ of them. Now,
  \begin{equation*}
  \begin{aligned}
  \Omega_{1,1} &\wedge\dotsb\wedge \Omega_{1,n} \\
    &=\left(d\log \frac{y_1}{y_2}\right) \wedge\dotsb\wedge \left(d\log \frac{y_1+\dotsb+y_{n-1}}{y_n}\right) \wedge \left(d\log(y_1+\dotsb+y_n)\right)\\
    &=\left(\frac{dy_1}{y_1} - \frac{dy_2}{y_2}\right) \wedge\dotsb\wedge \left(\frac{dy_1+\dotsb+dy_{n-1}}{y_1+\dotsb+y_{n-1}} - \frac{dy_n}{y_n}\right) \wedge \frac{dy_1+\dotsb+dy_n}{y_1+\dotsb+dy_n}\\
    &=
    \begin{vmatrix}
      \frac{1}{y_1} & \frac{1}{y_1+y_2} & \cdots & \frac{1}{y_1+\dotsb+y_n} \\
      -\frac{1}{y_2} & \frac{1}{y_1+y_2} & \cdots & \frac{1}{y_1+\dotsb+y_n} \\
      0 & -\frac{1}{y_3} & \cdots & \frac{1}{y_1+\dotsb+y_n} \\
      \vdots & \vdots & \ddots & \vdots \\
      0 & 0 & \cdots & \frac{1}{y_1+\dotsb+y_n}
     \end{vmatrix}
      dy_1 \wedge\dotsb\wedge dy_n,
  \end{aligned}
  \end{equation*}
  and the determinant equals $\prod\frac{1}{y_j}$ by  Gaussian elimination.
\end{proof}

Since the Lagrangian $L_1 \subset H_1$ is graded with respect to $\Omega_1$, we immediately conclude:
\begin{cor}
  The Lagrangian $L_{m,n}$ is graded with respect to the holomorphic volume form $\Omega_{m,n}$. \qed
\end{cor}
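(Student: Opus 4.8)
The plan is to deduce the grading of $L_{m,n}$ directly from the product decomposition, exploiting the fact that the phase of a holomorphic volume form is additive under external products. Recall that a grading of a Lagrangian $L$ with respect to a holomorphic volume form $\Omega$ is a lift to $\bR$ of the phase map $\alpha_L \co L \to S^1$ that sends a point to the argument of the restriction of $\Omega$ to the oriented Lagrangian tangent plane; such a lift exists precisely when the pullback $\alpha_L^{*}$ of the generator of $H^1(S^1;\bZ)$ vanishes in $H^1(L;\bZ)$.

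First I would observe that $L_{m,n} \cong T^m \times L_1^n$ is by construction contained in $X_{m,n}\setminus D \cong (\bC^{*})^{m} \times H^n_1$, so the preceding Lemma applies: along $L_{m,n}$ the form $\Omega_{m,n}$ is the external wedge product of the forms $d\log x$ on the $(\bC^{*})^{m}$ factors with the forms $\Omega_1$ on the $H_1$ factors. Since the argument of a wedge of forms, each restricted to a Lagrangian summand of a product, is the sum of the individual arguments, the phase map $\alpha_{L_{m,n}}$ decomposes as the sum of the phase maps of the factors under the K\"unneth identification $H^1(L_{m,n};\bZ) \cong \bigoplus_i H^1(T^1;\bZ) \oplus \bigoplus_j H^1(L_1;\bZ)$.

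It then remains to check that each factor is graded. On a circle $T^1 = \{|x|=1\} \subset \bC^{*}$, writing $x = e^{i\theta}$ gives $d\log x = i\,d\theta$, so the restriction of $d\log x$ to $\partial_\theta$ is the constant $i$; the phase map is therefore constant and the circle is graded. For the $H_1$ factors, $L_1$ is graded with respect to $\Omega_1$ by Seidel's construction, which is exactly the hypothesis recorded just before the statement. Hence every summand of $\alpha_{L_{m,n}}^{*}$ vanishes, so $\alpha_{L_{m,n}}^{*} = 0$ and $L_{m,n}$ is graded.

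There is essentially no obstacle here: the content is entirely the additivity of phase under external products together with the K\"unneth splitting of $H^1$, both of which are formal. The only point requiring any care is to confirm that $L_{m,n}$ lies in the region where the product decomposition of $\Omega_{m,n}$ is valid, namely the complement of $D$, and this is immediate from the definition $L_{m,n} \cong T^m \times L_1^n \subset (\bC^{*})^{m} \times H^n_1$.
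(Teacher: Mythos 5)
Your proof is correct and follows the same route as the paper: the corollary is stated as an immediate consequence of the preceding lemma on the product decomposition of $\Omega_{m,n}$ over $X_{m,n}\setminus D$, combined with the fact that $L_1$ is graded with respect to $\Omega_1$ and the circle factors have constant phase for $d\log x$. You have merely spelled out the additivity of phase and the K\"unneth splitting of $H^1$ that the paper leaves implicit.
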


\subsection{Bounding cochains}
\label{sec:bound-coch}

In order to associate to an immersed Lagrangian an object of the Fukaya category with coefficients in a commutative ring $\bfk$, one needs to choose an element of its deformation space, i.e. a local system as well as a linear combination of degree $1$ generators of the self-Floer cochains, with the property that the associated weighted count of holomorphic discs with one output vanishes. The weighted count is encoded by a function, with value the Floer cochains of degree $2$, defined on the deformation space. The simplest instance of this deformation space occurs when restrict attention to local systems of rank $1$ over $L$, in which case it consists of a product of copies of $\bfk$ indexed by the degree $1$ generators of the self-Floer cochains which are associated to self-intersections, with copies of $\bfk^*$ indexed by the generators of first homology; we refer to this function as the \emph{potential function} and its zero locus as the space of bounding cochains. A further natural condition to impose is that the resulting object be a non-trivial object of the Fukaya category, and we shall call the corresponding space the space of non-trivial bounding cochains.

We shall not perform an exhaustive analysis of the space of bounding cochains on $L_{m,n}$, as this will not be necessary for the purpose of understanding mirror symmetry. Instead, we shall focus our attention on a specific subspace which will turn out to be naturally identifiable with the mirror space.
\begin{rem}
  The usefulness of restricting to a subspace of the space of bounding cochains is already apparent in the case of the $1$-dimensional pair of pants $H_1$; a priori, the potential function is defined on a product $\bk_u \times \bk_v \times \bk^*_\mu$, and it is not too difficult to compute that the potential function is given by $u v (1 - \mu)$. The set of non-trivial objects is further constrained by imposing the equation $u v = 0$. The outcome is that the space of non-trivial bounding cochains is a product of $\bk^*$ with the union of the coordinate lines in $\bk_u \times \bk_v  $. In terms of the mirror space $\Pi_1$, this can be explained as follows: the moduli space parametrised by this Maurer-Cartan space is that of triples $(p_1, p_2, \mu)$, with $p_i \in Y$ and $\mu \cdot p_1 = p_2$. Thus, specialising to $\mu = 1$ recovers a copy of $\Pi_1$ embedded diagonally.

  Restricting this space will prove even more useful in higher dimension, because $L_1$ has self-intersections of negative degree, which causes $CF^1(L_{m,n})$ to be quite large.
\end{rem}

\subsubsection{A distinguished subspace of the deformation space}
\label{sec:dist-moduli-space}

Let us fix some notation: Seidel's Lagrangian $L_1$ has three self-intersections; we denote the corresponding pairs of generators of the self-Floer complex of $L_1$ by $(s,\partial_s)$, $(t, \partial_t)$, and $(c, \partial_c)$; our notational convention is that the elements $(s,t,c)$ have even degree, and the elements $(\partial_s, \partial_t, \partial_c)$ have odd degree.

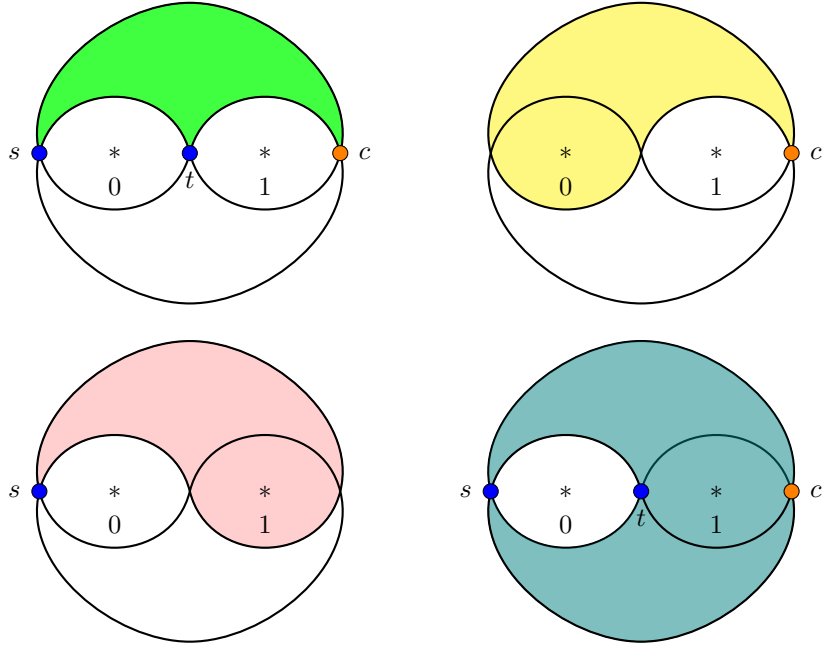
\begin{figure}
  \centering
  \begin{tikzpicture}

      \node[label = right:{$c$}] (c) at (3,0) {};
      \draw[thick, fill = yellow, fill opacity = 0.5] (3,0) .. controls (3.25,1) and (2,2) .. (1,2) .. controls (0,2) and (-1.25,1) .. (-1,0) .. controls (-.75,-1) and (.75, -1) ..  (1,0) .. controls (1.25,1)  and (2.75,1) .. (3,0);

  \begin{scope}[yscale = -1, xscale = 1]
    \draw[thick] (3,0) .. controls (3.25,1) and (2,2) .. (1,2) .. controls (0,2) and (-1.25,1) .. (-1,0) .. controls (-.75,-1) and (.75, -1) ..  (1,0) .. controls (1.25,1)  and (2.75,1) .. (3,0);
  \end{scope}
  \draw [fill=orange] (c) circle (0.1);
      \node[label = below:{$0$}] (0) at (0,0) {$\ast$};
      \node[label = below:{$1$}] (1) at (2,0) {$\ast$};
\begin{scope}[shift = {(-6,0)}]
\node[label = left:{$s$}] (s) at (-1,0) {};
      
      \node[label = below:{$t$}] (t) at (1,0) {};
      \node[label = right:{$c$}] (c) at (3,0) {};
      
      \draw[thin, fill = green, fill opacity = 0.75] (3,0) .. controls (3.25,1) and (2,2) .. (1,2) .. controls (0,2) and (-1.25,1) .. (-1,0) .. controls (-.75,1) and (.75, 1) ..  (1,0) .. controls (1.25,1)  and (2.75,1) .. (3,0);
      \draw[thick] (3,0) .. controls (3.25,1) and (2,2) .. (1,2) .. controls (0,2) and (-1.25,1) .. (-1,0) .. controls (-.75,-1) and (.75, -1) ..  (1,0) .. controls (1.25,1)  and (2.75,1) .. (3,0);
  \begin{scope}[yscale = -1, xscale = 1]
    \draw[thick] (3,0) .. controls (3.25,1) and (2,2) .. (1,2) .. controls (0,2) and (-1.25,1) .. (-1,0) .. controls (-.75,-1) and (.75, -1) ..  (1,0) .. controls (1.25,1)  and (2.75,1) .. (3,0);
  \end{scope}
  \draw [fill=blue] (s) circle (0.1);
  \draw [fill=blue] (t) circle (0.1);
  \draw [fill=orange] (c) circle (0.1);
      \node[label = below:{$0$}] (0) at (0,0) {$\ast$};
      \node[label = below:{$1$}] (1) at (2,0) {$\ast$};
    \end{scope}

    \begin{scope}[shift ={(0,-4.5)}]

      \draw[thick] (3,0) .. controls (3.25,1) and (2,2) .. (1,2) .. controls (0,2) and (-1.25,1) .. (-1,0) .. controls (-.75,-1) and (.75, -1) ..  (1,0) .. controls (1.25,1)  and (2.75,1) .. (3,0);
      \draw [thin, fill = teal, fill opacity = 0.5]      (3,0) .. controls (3.25,1) and (2,2) .. (1,2) .. controls (0,2) and (-1.25,1) .. (-1,0) .. controls (-.75,1) and (.75, 1) ..  (1,0)  .. controls (.75,-1) and (-.75, -1) .. (-1,0) .. controls (-1.25,-1)  and (0,-2) .. (1,-2) .. controls (2,-2) and (3.25,-1) .. (3,0) ;

  \begin{scope}[yscale = -1, xscale = 1]
    \draw[thick] (3,0) .. controls (3.25,1) and (2,2) .. (1,2) .. controls (0,2) and (-1.25,1) .. (-1,0) .. controls (-.75,-1) and (.75, -1) ..  (1,0) .. controls (1.25,1)  and (2.75,1) .. (3,0);
  \end{scope}
        \node[label = left:{$s$}] (s) at (-1,0) {};
      
      \node[label = below:{$t$}] (t) at (1,0) {};
      \node[label = right:{$c$}] (c) at (3,0) {};

  \draw [fill=blue] (s) circle (0.1);
  \draw [fill=blue] (t) circle (0.1);
  \draw [fill=orange] (c) circle (0.1);
      \node[label = below:{$0$}] (0) at (0,0) {$\ast$};
      \node[label = below:{$1$}] (1) at (2,0) {$\ast$};
\begin{scope}[shift = {(-6,0)}]
\node[label = left:{$s$}] (s) at (-1,0) {};
      
      \draw[thin, fill = pink,  fill opacity = 0.75] (3,0) .. controls (3.25,1) and (2,2) .. (1,2) .. controls (0,2) and (-1.25,1) .. (-1,0) .. controls (-.75,1) and (.75, 1) ..  (1,0) .. controls (1.25,-1)  and (2.75,-1) .. (3,0);
      
      \draw[thick] (3,0) .. controls (3.25,1) and (2,2) .. (1,2) .. controls (0,2) and (-1.25,1) .. (-1,0) .. controls (-.75,-1) and (.75, -1) ..  (1,0) .. controls (1.25,1)  and (2.75,1) .. (3,0);
  \begin{scope}[yscale = -1, xscale = 1]
    \draw[thick] (3,0) .. controls (3.25,1) and (2,2) .. (1,2) .. controls (0,2) and (-1.25,1) .. (-1,0) .. controls (-.75,-1) and (.75, -1) ..  (1,0) .. controls (1.25,1)  and (2.75,1) .. (3,0);
  \end{scope}
  \draw [fill=blue] (s) circle (0.1);
      \node[label = below:{$0$}] (0) at (0,0) {$\ast$};
      \node[label = below:{$1$}] (1) at (2,0) {$\ast$};
    \end{scope}    
  
    \end{scope}    

  \end{tikzpicture}
  \caption{The four holomorphic discs with allowed inputs and one output (the reflection of each disc across the horizontal axis may also contribute).}
  \label{fig:Seidel-Lagrangian-1-and-3-gon}
\end{figure}

The easiest way to determine the degrees of these generators is as follows: observe that our holomorphic volume form on $H_1$ extends meromorphically to $\bP^1$, with simple poles at $1$ and $\infty$. Since the generator $c$ is the output of a pair of genuine teardrops passing through $0$, one of which is shown on the upper right of Figure \ref{fig:Seidel-Lagrangian-1-and-3-gon}, and the holomorphic volume form extends without singularities to the origin, this generator has degree $2$ (the other teardrop is obtained by vertical reflection). The generator $s$ bounds a pair of teardrops passing through the puncture at $1$, so it has degree $0$. Similarly, $t$ bounds a pair of teardrops surrounding the puncture at infinity, so it also has degree $0$.  The degree of the Poincar\'e dual generators are therefore determined to be $1$, $1$, and $-1$.

Floer generators on the product $L_1^n$ are given as tensor products of generators on each component. Write $s_j$ for the generator
\begin{equation}
\underbrace{e\otimes\dotsm\otimes e}_{j-1} \otimes s \otimes \underbrace{e\otimes\dotsm\otimes e}_{n-j},    
\end{equation}
where $e\in CF^0(L_1)$ is the Morse unit. Similarly, write $\partial_{s_j}$, $t_j$, $\partial_{t_j}$, $c_j$, and $\partial_{ c_j}$ for the other simple self-intersections.

The torus $T^m\subset(\bC^*)^m$ has a natural splitting. We call the corresponding monodromy generators for rank-$1$ local systems $\{\partial_{\zeta_i}\}_{i=1}^{m}$. We view this as a $\bk^*$-coordinate on the space of branes supported on $L_{m,n}$.

We now distinguish a subspace $\mathrm{Def}_{m,n}$  of the deformation space of $L_{m,n}$ on which we will show that the potential function vanishes. This space is given by
\begin{equation}
\mathrm{Def}_{m,n} \equiv \{ \sum_{i=1}^m\rho_i\partial_{\zeta_i} + \sum_{	j=1}^n\left(  u_j\partial_{s_j} + v_j\partial_{t_j} \right) | \rho_i \in \bk^* \textrm{ and } u_j, v_j \in \bk \}.
\end{equation}

\subsubsection{Discs with boundary on $L_{m,n}$}
\label{sec:disc-bdry-Lmn}

Our plan to compute the Maurer--Cartan formula is to reduce the count of discs in $X_{m,n}$ to disc counts in $\bC$. To that end, we use the following result, which uses the projection map which appears in Lemma \ref{lem:product_decompositions}:
\begin{lem}
	A holomorphic map $u\colon D^2\to X_{m,n}$ with boundary on $L_{m,n}$ is determined by its projection
	\begin{equation}
	(x_1,\dotsc,x_m,q_1,\dotsc,q_n)\circ u
	\end{equation}
	to $\bC^{m} \times \bC^{n} $, which is a holomorphic disc with boundary on the product $T^m \times L_{1}^n$.

\end{lem}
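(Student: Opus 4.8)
The plan is to leverage Lemma \ref{lem:product_decompositions}, which says exactly that the projection
\begin{equation*}
\Phi \equiv (x_1,\dotsc,x_m,q_1,\dotsc,q_n)\colon X_{m,n}\setminus D \longrightarrow (\bC^{*})^{m}\times H_1^n
\end{equation*}
is a biholomorphism carrying $L_{m,n}$ to $T^m\times L_1^n$. Away from the divisor $D$ the assertion is then immediate, so the whole content of the lemma is to control the behaviour of $u$ at the points where it meets $D$. The first thing I would record is that the boundary condition keeps $u$ off $D$ along $\partial D^2$: since $L_{m,n}$ was constructed as a subset of $X_{m,n}\setminus D$, we have $u(\partial D^2)\subset L_{m,n}\subset X_{m,n}\setminus D$, and the self-intersection points $s,t,c$ of $L_1$ through which corners may pass also lie in $H_1=\bC\setminus\{0,1\}$, hence off $D$.

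From here the argument is essentially formal. As $D$ is closed and $u$ is continuous up to the boundary, the preimage $E\equiv u^{-1}(D)$ is a compact subset of the open disc; it is an analytic subset, and since $u(\partial D^2)\cap D=\emptyset$ forces $u(D^2)\not\subset D$ (the constant case being trivial), $E$ is a \emph{proper} analytic subset of the disc and therefore finite. On the dense open set $D^2\setminus E$ the map $u$ takes values in $X_{m,n}\setminus D$, so
\begin{equation*}
u|_{D^2\setminus E} = \bigl(\Phi|_{X_{m,n}\setminus D}\bigr)^{-1}\circ(\Phi\circ u),
\end{equation*}
exhibiting $u$ there as determined by its projection; by continuity and density it is determined on all of $D^2$. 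Concretely, two discs with equal projections agree off the union of their finite intersection loci with $D$, hence everywhere. That $\Phi\circ u$ is itself a holomorphic disc with boundary on $T^m\times L_1^n$ is then immediate from the holomorphicity of $\Phi$ and the identity $\Phi(L_{m,n})=T^m\times L_1^n$.

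The main obstacle, and the only place the argument is not a tautology, is precisely this passage across $E$: it is essential that the relevant discs really do meet $D$, since for example the teardrops contributing to the generator $c$ pass through the punctures at $0$, which by Lemma \ref{lem:iterative_description_divisors} lie on $D$. The explicit form of the inverse recovers the dropped coordinates rationally, via $x_0=q_n/\prod_{i=1}^m x_i$ together with the triangular system expressing each $y_j$ in terms of $q_1,\dotsc,q_n$; the denominators occurring here, namely $\prod_{i\ge 1}(x_i\circ u)$ and the $(q_j\circ u)$ for $j<n$, are holomorphic and nonvanishing on $\partial D^2$ (where $|x_i|=1$ and $q_j\in L_1\subset\bC\setminus\{0,1\}$), hence not identically zero. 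I expect verifying that this inverse is well defined and holomorphic to be routine given Lemma \ref{lem:product_decompositions}, with the genuine point being the analyticity/continuity step showing that no information about $u$ is lost at the finitely many points of $E$.
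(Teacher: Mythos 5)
Your argument is correct and is essentially the paper's own proof: both observe that the boundary condition forces $u(\partial D^2)$ off $D$, so $u$ is not contained in $D$, whence $u^{-1}(D)$ is finite by unique continuation (analyticity), and $u$ is therefore determined on a dense open set — hence everywhere by continuity — via the biholomorphism of Lemma \ref{lem:product_decompositions}. The closing remarks about the explicit rational inverse are not needed here (the paper defers that analysis to the existence/regularity statement of Lemma \ref{lem:regularity_discs}), but they do no harm.
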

\begin{proof}
	Note that $L_{m,n}$ lives away from $D$, so a holomorphic polygon $u$ with boundary on $L_{m,n}$ cannot live entirely in $D$. By unique continuation, $u$ intersects $D$ at only finitely many points, which implies that $u$ is determined on a dense open subset by the projection.

\end{proof}

Since we are considering a product Lagrangian in  $\bC^{m} \times \bC^{n}$, all holomorphic discs split.  The $\bC^m$-component is a disc with boundary on the Clifford torus, which is well-understood. Let us analyze the possible discs in the remaining $\bC$-components. These are discs with boundary on $L_1$, and the assumption that they contribute to the Maurer--Cartan function on $\mathrm{Def}_{m,n}$ means that all inputs are either $e$, $s$, or $t$. The next statement is easiest to understand while referring to Figure  \ref{fig:Seidel-Lagrangian-1-and-3-gon}:

\begin{lemma}\label{lem:mu0 disc classification in 1d}
	Any nonconstant holomorphic disc $u\colon D^2\to\bC$ with boundary on $L_1$ with zero or one output and with inputs from $\{s,t\}$ corresponds to one of the following possibilities:
	\begin{description}
		\item[Green]\label{eq:mu0 3-gon} A 3-gon in the complement of $0$ and $1$.
		\item[Yellow]\label{eq:mu0 1-gon through 0} A 1-gon intersecting $0$ with output on $c$.
		\item[Pink]\label{eq:mu0 1-gon through 1} A \emph{branched cover} of a 1-gon intersecting $1$ with one corner at $s$.
		\item[Blue]\label{eq:mu0 4-gon} A 4-gon intersecting $1$ with two corners at $s$, one corner at $t$, and one corner at $c$.
	\end{description}
	\qed
\end{lemma}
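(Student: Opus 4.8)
The plan is to reduce the statement to the standard classification of holomorphic polygons on a Riemann surface with boundary on an immersed exact curve, and then to a finite combinatorial inspection of the complementary regions of $L_1$. First I would pass to the compactification $\bP^1$, on which $\Omega_1 = d\log(w-1)$ extends to a meromorphic form with simple poles at $1$ and $\infty$, no zeros, and which is regular and nonvanishing at $0$. A nonconstant holomorphic $u\colon D^2\to\bC$ with $u(\partial D^2)\subset L_1$ is an open map, so by positivity of intersections the winding number $n_R\ge 0$ of $u|_{\partial D^2}$ about a generic point of each complementary region $R$ of $L_1$ is a well-defined nonnegative integer; since the image of $u$ is compact, $n_R=0$ on the unbounded region containing $\infty$. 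The function $R\mapsto n_R$ jumps by $\pm 1$ across each edge of the arrangement cut out by $L_1$, so it determines and is determined by the boundary word of $u$; conversely, by the Riemann mapping theorem every nonnegative region-multiplicity whose associated boundary $1$-chain is an immersed loop with convex corners is realised by an essentially unique holomorphic disc, with branch points occurring only over the poles of $\Omega_1$. Because $L_1$ is exact, the symplectic area of $u$ is controlled by its corner data, so for fixed input/output constraints only finitely many multiplicity functions occur, turning the analytic problem into a bounded combinatorial enumeration.

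Next I would enumerate the complementary regions. Since $L_1$ is a $4$-valent graph with three vertices $s,t,c$, Euler's formula ($3-6+F=2$) gives five faces: three bigons $R_0,R_1,R_\infty$, containing the punctures $0$, $1$, $\infty$ and with corner pairs $\{s,t\}$, $\{t,c\}$, $\{s,c\}$ respectively, together with two triangles $R_u,R_\ell$ (exchanged by the reflection across the real axis) with corners $\{s,t,c\}$ and containing no puncture. Holomorphicity forces the disc to occupy a convex sector (interior angle in $(0,\pi)$) at each corner, which fixes at every vertex whether the generator read off is the even or the odd one; combined with the grading ($\deg s=\deg t=0$, $\deg\partial_s=\deg\partial_t=1$, $\deg c=2$, $\deg\partial_c=-1$, computed via $\Omega_1$ exactly as in the paragraph preceding the Lemma) this labels each admissible corner as an input or the output. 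Imposing that every input lie at $s$ or $t$ and that there be at most one output then leaves only finitely many candidate multiplicity functions, and matching these against the five regions produces precisely the triangle $R_u$ (the Green $3$-gon with corners $s,t,c$), the teardrop through $0$ covering $R_0$ and $R_u$ with output $c$ (the Yellow $1$-gon), the monogon through $1$ with its single corner at $s$ (the Pink $1$-gon), and the quadrilateral through $1$ with corners $s,s,t,c$ (the Blue $4$-gon), together with their reflections across the real axis (c.f.\ Figure \ref{fig:Seidel-Lagrangian-1-and-3-gon}).

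The main obstacle is the behaviour at the punctures $1$ and $\infty$, where $\Omega_1$ has poles: a naive degree-one teardrop through $1$ with a single corner at $s$ fails to be rigid, since its Maslov index is shifted by the order of tangency with the pole divisor, so the genuine contributions are the \emph{branched covers} of this monogon. Here I would run Riemann--Hurwitz together with the grading to verify that exactly these covers carry the correct index and that they account for the full tower of discs (the mechanism responsible for the geometric series appearing in the potential function). The remaining, more routine, obstacle is completeness: I must exclude region-multiplicity functions that wrap the faces in ways not on the list --- covering $R_\infty$, producing a concave corner, or demanding an input at $c$ --- which is ruled out respectively by $n_{R_\infty}=0$, by the convexity constraint, and by the degree bookkeeping that forbids $c$ (of even degree) as an input. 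Assembling these observations yields the four families and completes the classification.
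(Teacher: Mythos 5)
The paper offers no argument for this lemma (it is stated with an immediate \qed, as something to be read off from Figure \ref{fig:Seidel-Lagrangian-1-and-3-gon}), so I can only judge your proposal on its own terms. Your overall strategy --- pass to $\bP^1$, encode a disc by its multiplicity function on the five complementary regions of $L_1$, and enumerate --- is the right kind of argument, and your description of the regions (three bigons $R_0$, $R_1$, $R_\infty$ with corner pairs $\{s,t\}$, $\{t,c\}$, $\{s,c\}$, plus two triangles) is correct. But two of the steps you lean on are wrong, and they are load-bearing.

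First, the claim that ``holomorphicity forces the disc to occupy a convex sector (interior angle in $(0,\pi)$) at each corner'' is false, and it contradicts an item on your own list: the Blue $4$-gon covers $R_u+R_1+R_\ell$, so at $t$ it occupies the three sectors complementary to the $R_0$-sector and at $c$ the three sectors complementary to the $R_\infty$-sector --- reflex corners of angle greater than $\pi$, locally modelled on $z\mapsto z^{\alpha/\pi}$ with $\alpha>\pi$. Such corners are perfectly holomorphic, so you cannot use convexity to exclude anything; the configurations you need to rule out (the embedded bigons bounding $R_0$ and $R_1$, their multiple covers, superpositions such as $R_u+R_\ell$ or Yellow plus Pink, etc.) are instead excluded by tracking \emph{which} of the two Floer generators decorates each corner, i.e.\ which pair of opposite sectors it occupies, and then invoking the hypotheses: at most one corner may carry an output, no input may sit at $c$, and the inputs at $s$ and $t$ must be the generators $\partial_s,\partial_t$ appearing in $\mathrm{Def}_{m,n}$ (this is what kills, e.g., the bigon bounding $R_1$, whose corner at $t$ is of the dual type). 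Your enumeration, as written, is therefore both internally inconsistent and incomplete. Second, your account of the Pink family is backwards: the degree-one monogon through $1$ with its corner at $s$ does not ``fail to be rigid'' --- it is precisely the simple Pink disc that Proposition \ref{prop:m0-disc-classification} uses and that lifts $u_1$ in Lemma \ref{lem:lift_elements_to_SH} --- and branch points of a cover are not constrained to lie over the poles of $\Omega_1$ (the poles enter only the grading computation). The branched covers belong to the Pink family, and only to it, for a purely combinatorial reason: the monogon's unique corner sits at $s$, an allowed input, so a $k$-fold cover still has all corners allowed, whereas covers of the Yellow, Green, or Blue discs would acquire extra corners at $c$. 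There is also no geometric series in the potential here; Lemma \ref{lem:curvature_vanishes} shows it vanishes identically on $\mathrm{Def}_{m,n}$.
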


We are now prepared to classify the possible discs contributing to the Maurer--cartan equation. We note that the following result in particular asserts that the Blue disc appearing
above does not contribute to the desired count:
\begin{prop}\label{prop:m0-disc-classification}
	Any holomorphic disc $u$ contributing to the Maurer--Cartan function $\mu^0$ on $\mathrm{Def}_{m,n}$ is nonconstant in at most two of the factors
	\begin{equation}
	\bC_{q_1}, \dotsc, \bC_{q_n}, \bC^m
	\end{equation}
	and those factors must be consecutive in the above ordering. These discs come in one of three types.
	\begin{enumerate}
		\item \label{case:m0-disc-0-intersection}$u$ does not intersect $D$. It has one nonconstant factor, and that factor is a Green disc from Lemma \ref{lem:mu0 disc classification in 1d}.
    \item\label{case:m0-rho-disc} $u$ intersects $D$, and it has two nonconstant $q$ factors. The first of those factors is a Yellow disc in $\bC_{q_j}$, and the second is a \emph{simple} Pink disc in $\bC_{q_{j+1}}$.
    \item\label{case:m0-hybrid-disc} $u$ intersects $D$, and its first nonconstant factor is a Yellow disc in $\bC_{q_n}$. Its $\bC^m$ component is either constant or a Maslov $2$ disc with boundary on the Clifford torus.
	\end{enumerate}
\end{prop}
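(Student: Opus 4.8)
The plan is to run two essentially independent bookkeeping arguments---one on the grading of the output, one on intersection multiplicities with the divisor $D$---and to show that together they force the three listed configurations. We may assume $u$ is nonconstant. By the preceding lemma and the splitting of product discs, $u$ is a tuple $(u_0,u_1,\dotsc,u_n)$, where $u_0$ is a holomorphic disc in $\bC^m$ with boundary on the Clifford torus and each $u_j$ (for $1\le j\le n$) is a disc in $\bC_{q_j}$ with boundary on $L_1$, of one of the four types in Lemma \ref{lem:mu0 disc classification in 1d}.

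First I would exploit the output grading. A disc contributing to $\mu^0$ has a single output, lying in $CF^2(L_{m,n})$, and the output generator of a product disc is the tensor product of the factor outputs; hence the factor output degrees sum to $2$. Using the degrees computed earlier---$c$ has degree $2$, while $s$, $t$, the Morse unit $e$, and the output (the unit) of a Clifford Maslov $2$ disc all have degree $0$---I conclude that exactly one factor outputs $c$ and every other factor outputs a degree $0$ class. Since the torus factor can only output a degree $0$ class, the distinguished degree $2$ factor is one of the $q_j$, of Green, Yellow, or Blue type, and $u_0$ is either constant or a single Maslov $2$ disc meeting one hyperplane $\{x_i=0\}$.

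The engine for the rest is the identity, valid throughout $X_{m,n}$, that $q_j=0$ if and only if $q_{j+1}=1$, both cutting out $D_{j+1}=\{y_1+\dotsb+y_{j+1}=0\}$, together with $\{q_n=0\}=\{\prod x_i=0\}=D_{n+1}$. Since $u$ maps into $X_{m,n}$, wherever $u_j$ meets $0$ the disc $u_{j+1}$ meets $1$ at the same point of the domain, and conversely; moreover $q_j$ and $q_{j+1}-1$ vanish to the same order along $D_{j+1}$ (their ratio $y_{j+2}/y_{j+1}$ is a unit), so the intersection multiplicities coincide. Thus a Yellow disc in $\bC_{q_j}$, which meets $0$ once, forces $u_{j+1}$ to meet $1$ once---i.e.\ to be a \emph{simple} Pink disc when $j<n$---or, when $j=n$, forces $\prod x_i=0$ once, so that $u_0$ is either constant (the zero coming from $x_0$) or a single Maslov $2$ disc (the zero coming from some $x_i$, $i\ge1$). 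Dually, a Pink or Blue disc meets $1$ and forces the preceding factor to meet $0$, hence to be Yellow.

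Assembling these, a Green disc meets neither $0$ nor $1$, triggers no propagation, and---being the unique degree $2$ factor---must be the only nonconstant factor, giving case~(\ref{case:m0-disc-0-intersection}). A Yellow disc in $\bC_{q_j}$ with $j<n$ propagates forward to a simple Pink disc in $\bC_{q_{j+1}}$; since a Pink disc localised near $1$ does not also pass through $0$, the chain stops there, yielding case~(\ref{case:m0-rho-disc}), while a Yellow disc in $\bC_{q_n}$ instead yields case~(\ref{case:m0-hybrid-disc}). The Blue disc is excluded exactly here: passing through $1$, it would force a Yellow disc in the previous factor, but then both Blue and Yellow would output $c$, violating the output-degree count. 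I expect the main obstacle to be precisely this passage from ``$u$ meets the ambient divisor'' to a sharp constraint on the neighbouring factor, with matching multiplicity: it requires working at $D$, where the product coordinates $(x_1,\dotsc,x_m,q_1,\dotsc,q_n)$ degenerate and where the factor discs are defined only after extension across the punctures $0$ and $1$.
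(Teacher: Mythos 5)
Your proposal is correct and follows essentially the same route as the paper's proof: the degree-$2$ output forces a single factor with output $c$, the identity $q_j^{-1}(0)=q_{j+1}^{-1}(1)=D_{j+1}$ (Lemma \ref{lem:iterative_description_divisors}) links consecutive factors with matching vanishing orders, and the Blue disc is excluded because it would force a second $c$-output in the preceding factor. The only difference is presentational: the paper organizes the case analysis by the intersection number of $u$ with $D$ ($0$, $1$, or $>1$, the last excluded because a multiply covered factor would require a $\partial_c$ input), whereas you propagate constraints outward from the unique $c$-output factor, but the content is the same.
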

\begin{proof}
  A disc which contributes to $\mu^0$ must have a degree $2$ generator as its output. Lemma \ref{lem:mu0 disc classification in 1d} implies that this output must be the product of the generator $c$ in one factor with the identity in the others. We can then consider the cases according to the intersection number with $D$.

  First, if the intersection number vanishes, then the only possibility is a Green disc in one factor with all other factors constant. Next, if the intersection number is $1$, and the component meeting $u$ is $D_j$, then  Lemma \ref{lem:iterative_description_divisors} implies that $u$ must agree with the Yellow disc in the factor labelled by $q_{j-1}$. Since the Yellow disc has an output labelled $c$, the output of any other factor must have degree $0$, i.e. be a marked point along the smooth part. There are now three situations to consider:
  \begin{enumerate}
  \item if $k < n$, then by the definition of $D_j$, the projection of $u$ to the $q_j$ factor must be non-trivial, and pass through the point $1$. Given the above constraint that its output must have degree $0$, this second factor must be a Pink disc.
  \item if $j=n$, and $u$ meets the component of the normal crossing divisor corresponding to $x_0=0$, then the projection to $\bC^m$ must be trivial.
  \item if $j=n$ and $u$ meets the component of the normal crossing divisor corresponding to $x_i=0$ for $i \neq 0$, then the projection to $\bC^m$ is a disc of index $2$ in the factor labelled by $i$.
  \end{enumerate}

  We preclude the possibility of a contribution from discs whose intersection number with $D$ is greater than $1$ as follows: on the one hand, there cannot be any factor in which there is a multiply-covered disc, because this would imply a contribution of a disc with input the Poincar\'e dual class to $c$, which contradicts our constraints about the inputs. On the other hand, if $u$ intersects more than one component of $D$, then there is more than one factor in which the output is labelled by $c$, contradicting the fact that the output must have total degree $2$.
      \end{proof}

Clearly, in the first case above, the disc exists and is uniquely defined. In the second case, we have a map
\begin{equation}
	u\colon D^2 \to \bC \times T,
\end{equation}
where $T$ is either $\bC$ or $\bC^m$. We wish to prove existence and uniqueness in this case as well.

\begin{lemma} \label{lem:regularity_discs}
	Let $\Sigma$ be a holomorphic disc with at least one boundary marked points and one interior marked point. For any collection of discs
	\begin{equation}
		u_j \colon \Sigma \to \bC_{q_j}, \qquad v\colon \Sigma \to \bC^m
	\end{equation}
	as in the second or third case of Proposition \ref{prop:m0-disc-classification}, mapping the interior marked point to the divisor $D$, there is a unique lift
	\begin{equation}
		u \colon \Sigma \to X_{m,n}.
              \end{equation}
Moreover,  the corresponding holomorphic disc with boundary on $L_{m,n}$ is regular.
\end{lemma}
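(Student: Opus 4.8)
The plan is to treat the two assertions — existence and uniqueness of the lift, and regularity of the resulting disc — separately, in both exploiting that the projection $\pi = (x_1,\dotsc,x_m,q_1,\dotsc,q_n)$ realises $X_{m,n}\setminus D$ as the product $(\bC^*)^m\times H_1^n$ by Lemma \ref{lem:product_decompositions}. Since the projected product disc meets $\pi(D)$ only at the interior marked point $z_0$, on the complement $\Sigma^\circ = \Sigma\setminus\{z_0\}$ it avoids $\pi(D)$, so Lemma \ref{lem:product_decompositions} gives a unique holomorphic lift $u\colon\Sigma^\circ\to X_{m,n}\setminus D$. The entire content is then to extend $u$ across $z_0$, to check the extension is unique and lands in $X_{m,n}$, and to establish regularity.

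For the extension I would use the local model of $\pi$ near $D$. Near a component $D_{j+1}$ (with $1\le j<n$) Lemma \ref{lem:iterative_description_divisors} gives $D_{j+1}=\{q_j=0\}=\{q_{j+1}=1\}$, and one checks that $a=q_j$ together with $r=y_{j+1}/y_{j+2}$ makes $X_{m,n}$ locally biholomorphic to $\bC^2_{(a,r)}$, with $\pi(a,r)=(a,ar)=(q_j,q_{j+1}-1)$ and $D_{j+1}=\{a=0\}$; that is, $\pi$ is locally the blow-down of a small resolution, contracting the one-dimensional fibres of $D_{j+1}$ to a codimension-$2$ locus. Near $D_{n+1}$ the analogous model is $(x_0,\dotsc,x_m)\mapsto(x_1,\dotsc,x_m,\prod x_i)$, with $x_0=q_n/\prod_{i\ge 1}x_i$. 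In these coordinates the hypothesis that the relevant factors are \emph{simple} — the Yellow disc meets $0$ to order $1$, the simple Pink disc meets $1$ to order $1$, and in the third case the Clifford factor is Maslov $2$ so meets $\{x_i=0\}$ to order $1$ — says exactly that $a$ and $ar$ (respectively $q_n$ and $x_i$) both vanish to order $1$ at $z_0$, so that $r$ (respectively $x_0$) extends holomorphically to a finite nonzero value and $u$ meets $D$ transversally. The removable singularity theorem then extends the lift across $z_0$, and uniqueness is inherited from $\Sigma^\circ$. Equivalently, one may write out the rational expressions $y_l=(q_n-1)\,q_{l-1}^{-1}\prod_{k=l}^{n-1}(q_k-1)q_k^{-1}$ and observe that the matching vanishing orders make all apparent poles cancel, keeping each $y_l$ in $\bC^*$.

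For regularity I would show that the linearised operator $D_u\dbar$ on $u^*TX_{m,n}$, with the totally real boundary condition $u^*TL_{m,n}$ and the corner conditions at the boundary marked points, is surjective by splitting it into rank-one pieces. Over $\Sigma^\circ$ the bundle $u^*TX_{m,n}$ splits as the sum of the pulled-back tangent lines of the factors $\bC_{q_1},\dotsc,\bC_{q_n},\bC^m$, and $D_u\dbar$ is diagonal. The small-resolution model extends this splitting across $z_0$ to a global splitting $u^*TX_{m,n}\cong\bigoplus_k E_k$ into complex line bundles, in which the two interacting factors are modified: the $r$-direction line is the $q_{j+1}$-tangent line twisted by $\mathcal{O}(-z_0)$ (and dually in the third case). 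With respect to this splitting $D_u\dbar$ is block triangular, and each diagonal block is a $\dbar$-operator on a line bundle over the disc with a totally real boundary condition, whose surjectivity is governed by its Maslov index: doubling to $\bP^1$ makes it surjective as soon as that index is $\ge -1$, since $H^1(\bP^1,\mathcal{O}(d))=0$ for $d\ge -1$. The constant factors contribute Maslov $0$, the Clifford factor is the regular Maslov $2$ disc of Cho–Oh, and the Green, Yellow and Pink surface factors are regular by automatic transversality for immersed curves in a Riemann surface; so each block is surjective, hence so is $D_u\dbar$.

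The main obstacle is precisely the bookkeeping at $z_0$: one must verify that the global line-bundle splitting genuinely extends across the marked point through the small resolution, and that the $\mathcal{O}(-z_0)$ twist carried by the interacting factor does not drop its Maslov index below $-1$. This is where the word \emph{simple} in Proposition \ref{prop:m0-disc-classification} does all the work: it is exactly the order-$1$ tangency to $D$ that simultaneously makes the lift holomorphic and keeps the twisted index at the automatic-transversality threshold. Once this is in hand, surjectivity of the diagonal blocks assembles to regularity of $u$, and the same fibre-product-over-$D$ picture shows that the moduli space near $u$ is cut out transversally.
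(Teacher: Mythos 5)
Your proposal is correct, and it splits naturally into two halves that compare differently with the paper. For existence and uniqueness of the lift, you and the paper do essentially the same thing: the paper restricts to $\Sigma\setminus\{z_0\}$, invokes removable singularities, and checks boundedness of $x_0$, $y_j$, $y_j^{-1}$ via exactly the rational expressions you list as your ``equivalent'' formulation, with the matching simple zeros cancelling the apparent poles. Your local model $(a,r)\mapsto(a,ar)$ with $a=q_j$, $r=y_{j+1}/y_{j+2}$ is a correct and rather illuminating repackaging of that computation (one small imprecision: in the third case, when $u$ meets the component $\{x_0=0\}$, the function $x_0$ extends to the value $0$ rather than a nonzero value, which is of course permitted since $x_0\in\bC$). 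For regularity, however, you take a genuinely different route. The paper computes the Fredholm index ($1$ in case (2), $2$ in case (3), matching the dimension of the reparametrisation group), identifies the kernel of the linearisation with reparametrisation vector fields using the uniqueness of the curves in each factor, and concludes surjectivity from the index formula. You instead aim to prove surjectivity directly, by extending the product splitting of $u^*TX_{m,n}$ across $z_0$ through your local model into a block-triangular decomposition into line bundles and invoking automatic transversality blockwise. Both are standard and both work, but they distribute the labour differently: the paper's argument needs only the rigidity of the factor curves, whereas yours requires computing the Maslov index of every diagonal block, including the corner contributions at the boundary punctures and the $\mathcal{O}(-z_0)$ twist on the interacting factor, and verifying each is at least $-1$. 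You correctly identify this as the crux and correctly attribute it to the simplicity hypothesis, but you assert rather than carry out the computation; since the twist costs $2$, the bound on the twisted block is not a formality and should be checked (a useful sanity check is that the sum of your block indices must reproduce the paper's total index of $1$ or $2$). If you want to keep your decomposition but avoid that bookkeeping, you can do what the paper does at the last step: show the kernel of your triangular operator consists only of reparametrisation fields and let the index theorem deliver surjectivity.
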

\begin{proof}
  The interior marked point $z$ is the only point which could possibly map to $D$, so we know that there is a uniquely defined map $u_0 \colon \Sigma \setminus z \to X_{m,n} \setminus D$. By the removable singularities theorem, it suffices to show that the image of $u_0$ is bounded away from the ends of $\bC^{m+1} \times \left( \bC^* \right)^n$. Concretely, this amounts to checking that the functions $x_i \circ u_0$, $y_j \circ u_0$, and $\frac 1 {y_j \circ u_0}$ are each bounded. To that end, note that $x_1 \circ u_0$ through $x_m \circ u_0$ are bounded by assumption, while the others we can bound by formula. Indeed,
  \begin{align*}
    x_0 &= \frac { q_n } { x_1 \dotsm x_m } \\
    y_j &= \begin{cases}
      \frac { q_n - 1 } { q_{n-1} } &\text{ if }j = n \\
      \frac { q_j - 1 } { q_{j-1} } y_{j+1} &\text{ if } 1 < j < n \\
      \left( q_1 - 1 \right) y_2 &\text{ if } j = 1.
    \end{cases}
  \end{align*}

  Let us examine each of these formulas in turn. The formula for $x_0$ gives
  \begin{equation}
  x_0 \circ u_0 = \frac{u_n}{\prod v_i}.
  \end{equation}
  The denominator is either a nonzero constant or we are in case \ref{case:m0-hybrid-disc} of Proposition \ref{prop:m0-disc-classification}, in which case at most one $v_i$ is allowed a single simple zero by the Maslov 2 assumption. If this happens, then that zero is an intersection point with $D$, which by assumption means that it occurs at the marked point and that $u_n$ also has a simple zero at the marked point.  The situation for $y_j$ in case \ref{case:m0-rho-disc} is similar, using the fact that $u_j-1$ is allowed has simple $0$ precisely when $u_{j-1}$ has a simple $0$.

To prove regularity, observe that since the inputs have Maslov index $1$ and the output has Maslov index $2$, the holomorphic disc under consideration has Fredholm index equal to the rank of the space of holomorphic vector fields on the domain, which is $1$ in case \ref{case:m0-rho-disc} (because there is one input and one output), and $2$ in case \ref{case:m0-hybrid-disc} (because there is no input). It thus suffices to show that the kernel of the linearisation of the $\dbar$ operator (which are holomorphic vector field on the domain valued in the tangent space of the fibre) consists precisely of such vector fields in the domain.  This follows from the same analysis that yields the above description of the space of holomorphic discs: such vector fields split as a product away from the marked point, but the matching condition along the marked point shows that the vector fields must move the marked points in the same way, so they either correspond to an automorphism of the domain, or to an infinitesimal deformation. No such deformations exist because the curves in each factor are unique.
\end{proof}

\subsubsection{Proof that the curvature vanishes}
\label{sec:comp-maur-cart}

We conclude this section with the following computation:
\begin{lem} \label{lem:curvature_vanishes}
Every element of $\mathrm{Def}(L_{m,n})$ is a bounding cochain.
\end{lem}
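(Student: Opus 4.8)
The plan is to show that the potential function $\mu^0$ vanishes on $\mathrm{Def}_{m,n}$ by exhibiting a fixed-point-free, weight-preserving, sign-reversing involution on the set of holomorphic discs that contribute to it. First I would record the shape of $\mu^0(\beta)$ for $\beta = \sum_i \rho_i \partial_{\zeta_i} + \sum_j (u_j \partial_{s_j} + v_j \partial_{t_j}) \in \mathrm{Def}_{m,n}$. By Proposition \ref{prop:m0-disc-classification} together with the existence, uniqueness and regularity statement of Lemma \ref{lem:regularity_discs}, each contributing configuration is one of the three listed types, is rigid and regular, and carries an output generator of the form $c_j$. Hence $\mu^0(\beta) = \sum_{j=1}^n P_j(\beta)\, c_j$, where $P_j$ is a signed sum of the weights (monomials in the $u$'s, $v$'s and $\rho$'s attached to the input corners and to the holonomy of the chosen local system) of the discs with output $c_j$. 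It therefore suffices to prove that each $P_j$ vanishes.

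The involution I would use is complex conjugation $\iota$, acting on $\bC^{m+1}\times(\bC^*)^n$ by $x_i \mapsto \bar x_i$ and $y_j \mapsto \bar y_j$; in the coordinates of Lemma \ref{lem:product_decompositions} this is reflection across the real axis in each factor $\bC_{q_j}$ together with the conjugation $x_i\mapsto \bar x_i$ on $(\bC^*)^m$. One checks that $\iota$ preserves $X_{m,n}$ (the defining equation is real), preserves $D$ and each component $D_j$ (defined by real equations), preserves $L_{m,n}$ (both $T^m$ and Seidel's $L_1$ are invariant under conjugation, and the three self-intersections $s,t,c$ lie on the real axis), and preserves the bounding $\Spin$ structure. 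Since $\iota$ is anti-holomorphic for the standard integrable $J$, precomposing with conjugation on the domain sends $J$-holomorphic discs to $J$-holomorphic discs, and this operation carries each contributing disc to another contributing disc of the same type with the same weight: the input corners are unchanged because $s_j,t_j$ are $\iota$-fixed, so the factors $u_j,v_j$ are unchanged; the $L_1$ local systems are trivial on $\mathrm{Def}_{m,n}$, so the reflected teardrop and triangle factors pick up no monodromy; and in case \ref{case:m0-hybrid-disc} the Maslov $2$ Clifford factor $z\mapsto z$ is itself $\iota$-invariant, so its holonomy weight $\rho_i$ is unchanged.

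The pairing is genuine (fixed-point free) because every contributing disc is nonconstant in some $\bC_{q_j}$ factor containing a Yellow teardrop or a Green triangle, and such a disc lies strictly in one half-plane, so $\iota$ sends it to a distinct ``reflected'' disc. The remaining, and crucial, point is that $\iota$ \emph{reverses} the sign with which each disc is counted: this is exactly the orientation computation underlying the vanishing of the curvature of the Seidel Lagrangian in the one-dimensional pair of pants, i.e.\ the factor $1-\mu$ in the potential $uv(1-\mu)$ of the introductory remark, specialised to trivial monodromy $\mu=1$. Granting this, each disc and its $\iota$-image contribute equal weights with opposite signs, so they cancel in pairs and $P_j=0$ for every $j$; thus $\mu^0(\beta)=0$ and every $\beta\in\mathrm{Def}_{m,n}$ is a bounding cochain.

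I expect the main obstacle to be precisely this sign statement, namely checking that the anti-holomorphic involution acts by $-1$ on the determinant lines of all three families uniformly in the number of inputs (two for Green, one for the Yellow--Pink configuration, none for the Yellow--Clifford configuration). This is where the choice of the bounding $\Spin$ structure is essential, and it is the only place where a careful orientation analysis of the linearised $\dbar$-operator under conjugation is required; the freeness and the weight-invariance of the involution are comparatively routine given the explicit classification and the regularity already established in Lemma \ref{lem:regularity_discs}.
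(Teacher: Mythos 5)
Your overall strategy is the one the paper uses: the contributions to $\mu^0$ on $\mathrm{Def}_{m,n}$ are cancelled in pairs, with each pair related by reflection across the real axis and the opposite signs traced back to the bounding $\Spin$ structure. But there are two points where your write-up falls short of a proof. First, the global conjugation involution is not actually well-defined on the set of configurations counted in cases \ref{case:m0-rho-disc} and \ref{case:m0-hybrid-disc} of Proposition \ref{prop:m0-disc-classification}. In those cases the output of the product disc is $c_j$ tensored with the unit $e$ in the remaining factors, and the unit is represented by a chosen generic marked point on $L_1$ (resp.\ on the torus factor); since the only conjugation-fixed points of $L_1$ are its self-intersections, this marked point is moved by $\iota$, so conjugating the Pink (resp.\ Clifford) factor takes a configuration satisfying the point constraint to one that violates it. The paper's pairing instead reflects only the factor carrying the degree-$2$ output (the Yellow or Green disc), leaving the auxiliary Pink or Clifford factor untouched; this is legitimate because Lemma \ref{lem:regularity_discs} provides a unique regular lift for each independent choice of factors, and it is this pairing that produces the two cancelling terms $\pm u_{j+1}\cdot c_j$ and $\pm\left(1+\sum_i\rho_i\right)\cdot c_n$.

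Second, and more seriously, you defer the sign-reversal --- the statement that the two reflected discs are counted with opposite signs --- and explicitly flag it as the main obstacle. That is indeed the crux, and without it the argument is incomplete: the paper supplies it by invoking the combinatorial sign rule for the bounding $\Spin$ structure, under which each disc carries a factor of $\pm1$ according to whether the distinguished marked point encoding the $\Spin$ structure lies on its boundary; since the boundaries of the two reflected discs partition $L_1$ minus its self-intersections, exactly one of the two contains that marked point, which forces the opposite signs. You should either carry out this computation or reduce to the corresponding one-dimensional statement for the Seidel Lagrangian explicitly; as written, the cancellation is asserted rather than proved.
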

\begin{proof}
  We consider the contributions of the three cases in Proposition \ref{prop:m0-disc-classification}, which we will see vanish by symmetry.

\subsubsection*{Case \ref{case:m0-disc-0-intersection}}
The only holomorphic polygons in any component which don't intersect $D$ are constants and the 3-gons of the form appearing in Figure \ref{fig:Seidel-Lagrangian-1-and-3-gon} and their reflections. The contribution of these curves to $\mu_0$ is given by:
\begin{equation}
\sum_{j=1}^n \left(u_jv_j\cdot c_j - u_jv_j\cdot c_j \right)= 0.
\end{equation}
The sign is determined by the computation of the product in \cite{Seidel2011}: given a choice of orientation of the Lagrangian, together with a distinguished marked point, the sign is given as in \cite{Seidel2008,Abouzaid2008} by a combinatorial count depending on the difference between the orientation of the Lagrangian and the boundary orientation of the disc along each segment except the one which ends at the output with respect to the counterlockwise orientation, together with a factor of $\pm 1$ according to whether the marked point lies on the boundary of the disc. In our case, the boundary of $L_1$ is partitioned between the two possible Green discs, so only one of them can contain the marked point, which implies that the sign contributions are opposite.

\subsubsection*{Case \ref{case:m0-rho-disc}}

These discs have input $s_{j+1}$, and output $c_{j}$. A choice of point in the factor of $L_{m,n}$ labelled by $q_{j+1}$, representing the unit in the cohomology of $S^1$ distinguishes one of the two Pink discs as contributing to the count (since the union of the boundaries of these two discs together bijectively cover the complement of the self-intersections). This means that the contributions in this case are given by
\begin{equation}
\sum_{j=1}^n \left(u_{j+1} \cdot c_j - u_{j+1}\cdot c_j \right)= 0,
\end{equation}
with the two terms coming from the two possible Yellow discs in the factor labelled by $q_{j}$. The sign is determined as above by the contribution in each factor.

\subsubsection*{Case \ref{case:m0-hybrid-disc}}
In this case, there are again two discs corresponding to the two possible Yellow discs in the factor labelled by $q_j$, and their contribution is
\begin{equation}
   \left( 1 + \sum_{i=1}^{m} \rho_i\right) \cdot c_{n} - \left( 1 + \sum_{i=1}^{m} \rho_i\right) \cdot c_{n} = 0.
\end{equation}
In the above expression, the terms which equal $ \pm 1 \cdot c_n$ arise from the two discs which meet the $x_0=0$ component of the normal crossing divisor $\prod x_i =0$, and the terms $\pm \rho_i \cdot c_n$ from the discs with meet the component $x_i =0$ for $i \neq 0$. 

\end{proof}

\subsection{Floer homology with the $0$-section}
\label{sec:floer-homology-with}

The outcome of Lemma \ref{lem:curvature_vanishes} is that every element of $\mathrm{Def}_{m,n}$ defines an object of the Fukaya category. We shall eventually identify the quasi-isomorphism class of these objects; in this section, we shall, as a first step, show that they do not vanish by computing the Floer cohomology with a non-compact Lagrangian which is mirror to the structure sheaf:
\begin{defin}
  The \emph{zero section} of $X_{m,n}$ is the real Lagrangian $L^+_{m,n}$ given by the conditions
  \begin{equation}
    x_i, y_j \in (0,\infty).    
  \end{equation}
\end{defin}
To simplify the notation, we shall usually write $L^+$ for this Lagrangian, omitting the subscripts. In order for $L^+$ to define an object of the Fukaya category, we must show that it is well-behaved at infinity:
\begin{lem}
  The inclusion of $L^+$ in $X_{m,n}$ is proper.
\end{lem}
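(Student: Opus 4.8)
The plan is to deduce properness from the observation that $L^+$ is a \emph{closed} subset of $X_{m,n}$. For the inclusion $\iota\co A\hookrightarrow X$ of a closed subset of a manifold, properness is immediate: given a compact $K\subset X$, the preimage $\iota^{-1}(K)=A\cap K$ is a closed subset of the compact set $K$, hence compact. So the entire content of the lemma reduces to checking that a sequence in $L^+$ which converges in $X_{m,n}$ has its limit in $L^+$. Equivalently, I must show that the open positivity conditions defining $L^+$ are not violated in the limit, which requires using that the ambient space already excludes the relevant boundary behaviour.

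First I would record the two defining positivity conditions separately. Take a sequence of points $p_k=(x_0^{(k)},\dotsc,x_m^{(k)},y_1^{(k)},\dotsc,y_n^{(k)})\in L^+$ converging in $X_{m,n}$ to a point $p_\infty=(x_0^\infty,\dotsc,y_n^\infty)$. Since each $y_j^{(k)}$ is a positive real and the $y$-coordinates of $p_\infty$ lie in $\bC^*$ by the very definition of $X_{m,n}$, each limit lies in $[0,\infty)\cap\bC^*=(0,\infty)$; here the constraint $y_j\in\bC^*$ is exactly what rules out the boundary value $y_j^\infty=0$ that would otherwise escape $L^+$. Each $x_i^{(k)}$ is likewise a positive real, so a priori one only obtains $x_i^\infty\in[0,\infty)$.

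The one point that genuinely uses the defining equation is the strict positivity of the $x_i^\infty$. On $L^+$ the equation $\prod x_i=1+\sum y_j$ together with $y_j>0$ gives $\prod_i x_i^{(k)}\geq 1$, and passing to the limit yields $\prod_i x_i^\infty=1+\sum_j y_j^\infty\geq 1>0$. A product of nonnegative reals bounded below by $1$ can have no vanishing factor, so each $x_i^\infty>0$. Hence $p_\infty\in L^+$, establishing closedness and therefore properness.

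The step I expect to be the crux, short though it is, is this last use of the hypersurface equation: it is the lower bound $\prod x_i\geq 1$ that prevents the $x$-coordinates from collapsing onto the divisor $\prod x_i=0$ (i.e.\ onto $D_{n+1}$) in a limit. Everything else is soft point-set topology, and no conical-end or contact-boundary analysis is needed for the bare properness statement; that refinement, if wanted, would be addressed separately when one checks that $L^+$ is cylindrical at infinity.
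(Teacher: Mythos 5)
Your proof is correct and follows essentially the same route as the paper's: both reduce the lemma to showing $L^+$ is closed in $X_{m,n}$, note that the $\bC^*$ constraint keeps the $y_j$ positive in any limit, and use the defining equation to rule out a vanishing $x_i$. The only cosmetic difference is that the paper argues by contradiction (a limit with $\prod x_i = 0$ would force $1+\sum y_j=0$ and hence a negative $y_j$), whereas you phrase the same fact as the lower bound $\prod x_i \geq 1$.
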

\begin{proof}
  Any limit point of $L^+$ in $X_{m,n}$ must have some coordinate $x_i$ that vanishes. By the defining equation of $X_{m,n}$, this implies that $1 + \sum y_j =0$, which means that some coordinate $y_j$ must be negative, contradicting the assumption that $0< y_j$.
\end{proof}
One also needs to ensure that $L^+$ is invariant under the Liouville flow of the chosen primitive for the symplectic form; this follows immediately from choosing the primitive to be real.

We note that $L^+$ is contained in the complement of the divisor $D$, i.e. in the open subset $(\bC^{*})^{m} \times H^n_1 $ of $X_{m,n}$, and agrees with the product of the lines  $0 < x_i$  and $1 < q_j$ in these coordinates. In drawing the figures, we shall describe $L^+$ as a product of vertical arcs emanating from the marked point labelled $1$; this can be achieved by deforming $L^+$ by Hamiltonian isotopy (deforming the primitive as well to preserve the fact that $L^+$ is both a product of Lagrangians and is conical).

The factors of the Lagrangians $L^+$ and $L_{m,n}$ corresponding to the coordinates $q_i$ intersect transversely at two points which we label $a_i$ and $b_i$, while the factors in $(\bC^*)^m$ meet at one point. For an appropriate choice of grading, the degree of $a_i$ is $0$ and that of $b_i$ is $1$.

\begin{figure}
  \centering
  \begin{tikzpicture}
            \node[label = {[shift={(0.3,-.2)}]$a$}] (a) at (2,.75) {};
      \node[label = right:{$b$}] (b) at (2,1.75) {};

      \draw[thick] (3,0) .. controls (3.25,1) and (2,2) .. (1,2) .. controls (0,2) and (-1.25,1) .. (-1,0) .. controls (-.75,-1) and (.75, -1) ..  (1,0) .. controls (1.25,1)  and (2.75,1) .. (3,0);
      
      \begin{scope}
        \clip (2,-2) rectangle (-1.5,2);
      \draw[thick, fill = yellow, fill opacity = 0.5] (3,0) .. controls (3.25,1) and (2,2) .. (1,2) .. controls (0,2) and (-1.25,1) .. (-1,0) .. controls (-.75,-1) and (.75, -1) ..  (1,0) .. controls (1.25,1)  and (2.75,1) .. (3,0);  
      \end{scope}
     
  \begin{scope}[yscale = -1, xscale = 1]
    \draw[thick] (3,0) .. controls (3.25,1) and (2,2) .. (1,2) .. controls (0,2) and (-1.25,1) .. (-1,0) .. controls (-.75,-1) and (.75, -1) ..  (1,0) .. controls (1.25,1)  and (2.75,1) .. (3,0);
  \end{scope}
    \draw[thick, gray] (2,0)  -- (2,2);
   \draw [fill=blue] (a) circle (0.1);
  \draw [fill=orange] (b) circle (0.1);
      \node[label = below:{$0$}] (0) at (0,0) {$\ast$};
      \node[label = below:{$1$}] (1) at (2,0) {$\ast$};
    
      \begin{scope}[shift = {(-6,0)}]
            \node[label = {[shift={(0.3,-.2)}]$a$}] (a) at (2,.75) {};
      \node[label = right:{$b$}] (b) at (2,1.75) {};

  \node[label = left:{$s$}] (s) at (-1,0) {};
      
      \node[label = below:{$t$}] (t) at (1,0) {};

      \draw[thin] (3,0) .. controls (3.25,1) and (2,2) .. (1,2) .. controls (0,2) and (-1.25,1) .. (-1,0) .. controls (-.75,1) and (.75, 1) ..  (1,0) .. controls (1.25,1)  and (2.75,1) .. (3,0);
      \begin{scope}
        \clip (2,-2) rectangle (-1.5,2);
      \draw[thin, fill = green, fill opacity = 0.75] (3,0) .. controls (3.25,1) and (2,2) .. (1,2) .. controls (0,2) and (-1.25,1) .. (-1,0) .. controls (-.75,1) and (.75, 1) ..  (1,0) .. controls (1.25,1)  and (2.75,1) .. (3,0);  
      \end{scope}
      
      \draw[thick] (3,0) .. controls (3.25,1) and (2,2) .. (1,2) .. controls (0,2) and (-1.25,1) .. (-1,0) .. controls (-.75,-1) and (.75, -1) ..  (1,0) .. controls (1.25,1)  and (2.75,1) .. (3,0);
  \begin{scope}[yscale = -1, xscale = 1]
    \draw[thick] (3,0) .. controls (3.25,1) and (2,2) .. (1,2) .. controls (0,2) and (-1.25,1) .. (-1,0) .. controls (-.75,-1) and (.75, -1) ..  (1,0) .. controls (1.25,1)  and (2.75,1) .. (3,0);
  \end{scope}
  \draw [fill=blue] (s) circle (0.1);
  \draw [fill=blue] (t) circle (0.1);
      \node[label = below:{$0$}] (0) at (0,0) {$\ast$};
      \node[label = below:{$1$}] (1) at (2,0) {$\ast$};
      \draw[thick, gray] (2,0)  -- (2,2);
        \draw [fill=blue] (a) circle (0.1);
  \draw [fill=orange] (b) circle (0.1);
    \end{scope}

\begin{scope}[shift = {(-6,-4.5)}]

  \node[label = left:{$s$}] (s) at (-1,0) {};   
  \begin{scope}[yscale=-1]
    \draw[thin, fill = pink,  fill opacity = 0.75] (3,0) .. controls (3.25,1) and (2,2) .. (1,2) .. controls (0,2) and (-1.25,1) .. (-1,0) .. controls (-.75,1) and (.75, 1) ..  (1,0) .. controls (1.25,-1)  and (2.75,-1) .. (3,0);
    
  \end{scope}
      
      \draw[thick] (3,0) .. controls (3.25,1) and (2,2) .. (1,2) .. controls (0,2) and (-1.25,1) .. (-1,0) .. controls (-.75,-1) and (.75, -1) ..  (1,0) .. controls (1.25,1)  and (2.75,1) .. (3,0);
  \begin{scope}[yscale = -1, xscale = 1]
    \draw[thick] (3,0) .. controls (3.25,1) and (2,2) .. (1,2) .. controls (0,2) and (-1.25,1) .. (-1,0) .. controls (-.75,-1) and (.75, -1) ..  (1,0) .. controls (1.25,1)  and (2.75,1) .. (3,0);
  \end{scope}
  \draw [fill=blue] (s) circle (0.1);
      \node[label = below:{$0$}] (0) at (0,0) {$\ast$};
      \node[label = below:{$1$}] (1) at (2,0) {$\ast$};
           \draw[thick, gray] (2,0)  -- (2,2);
  \node[label = {[shift={(0.3,-.2)}]$a$}] (a) at (2,.75) {};
      \draw [fill=black] (a) circle (0.1);

    \end{scope}   
\begin{scope}[shift = {(0,-4.5)}]

  \node[label = left:{$s$}] (s) at (-1,0) {};
            
  \begin{scope}
    \draw[thin, fill = pink,  fill opacity = 0.75] (3,0) .. controls (3.25,1) and (2,2) .. (1,2) .. controls (0,2) and (-1.25,1) .. (-1,0) .. controls (-.75,1) and (.75, 1) ..  (1,0) .. controls (1.25,-1)  and (2.75,-1) .. (3,0);
    
  \end{scope}
      
      \draw[thick] (3,0) .. controls (3.25,1) and (2,2) .. (1,2) .. controls (0,2) and (-1.25,1) .. (-1,0) .. controls (-.75,-1) and (.75, -1) ..  (1,0) .. controls (1.25,1)  and (2.75,1) .. (3,0);
  \begin{scope}[yscale = -1, xscale = 1]
    \draw[thick] (3,0) .. controls (3.25,1) and (2,2) .. (1,2) .. controls (0,2) and (-1.25,1) .. (-1,0) .. controls (-.75,-1) and (.75, -1) ..  (1,0) .. controls (1.25,1)  and (2.75,1) .. (3,0);
  \end{scope}
  \draw [fill=blue] (s) circle (0.1);
      \node[label = below:{$0$}] (0) at (0,0) {$\ast$};
      \node[label = below:{$1$}] (1) at (2,0) {$\ast$};
           \draw[thick, gray] (2,0)  -- (2,2);
      \node[label = right:{$b$}] (b) at (2,1.75) {};

  \draw [fill=black] (b) circle (0.1);

    \end{scope}   

    \begin{scope}[shift ={(-3,-9)}]
      \draw[thick] (3,0) .. controls (3.25,1) and (2,2) .. (1,2) .. controls (0,2) and (-1.25,1) .. (-1,0) .. controls (-.75,-1) and (.75, -1) ..  (1,0) .. controls (1.25,1)  and (2.75,1) .. (3,0);

      \begin{scope}
           \clip  (-1.5,2) -- (2,2) -- (2,.75) .. controls (1.25,.75) and (2.75,1) .. (3,0) -- (3.5,-2) -- (-1.5,-2) -- cycle;
\draw [thin, fill = teal, fill opacity = 0.5]      (3,0) .. controls (3.25,1) and (2,2) .. (1,2) .. controls (0,2) and (-1.25,1) .. (-1,0) .. controls (-.75,1) and (.75, 1) ..  (1,0)  .. controls (.75,-1) and (-.75, -1) .. (-1,0) .. controls (-1.25,-1)  and (0,-2) .. (1,-2) .. controls (2,-2) and (3.25,-1) .. (3,0) ;
           
      \end{scope}
      
  \begin{scope}[yscale = -1, xscale = 1]
    \draw[thick] (3,0) .. controls (3.25,1) and (2,2) .. (1,2) .. controls (0,2) and (-1.25,1) .. (-1,0) .. controls (-.75,-1) and (.75, -1) ..  (1,0) .. controls (1.25,1)  and (2.75,1) .. (3,0);
  \end{scope}
        \node[label = left:{$s$}] (s) at (-1,0) {};
      
      \node[label = below:{$t$}] (t) at (1,0) {};

  \draw [fill=blue] (s) circle (0.1);
  \draw [fill=blue] (t) circle (0.1);
      \node[label = below:{$0$}] (0) at (0,0) {$\ast$};
      \node[label = below:{$1$}] (1) at (2,0) {$\ast$};
                 \draw[thick, gray] (2,0)  -- (2,2);
   \node[label = {[shift={(0.3,-.2)}]$a$}] (a) at (2,.75) {};
      \node[label = right:{$b$}] (b) at (2,1.75) {};

      \draw [fill=blue] (a) circle (0.1);
  \draw [fill=orange] (b) circle (0.1);

    \end{scope}    

  \end{tikzpicture}
  \caption{The holomorphic discs that can a priori arise in each factor when computing in the differential of the Floer complex of the pair $L_{m,n}$ and $L^+_{m,n}$.}
  \label{fig:Seidel-Lagrangian-2-and-4-gon}
\end{figure}
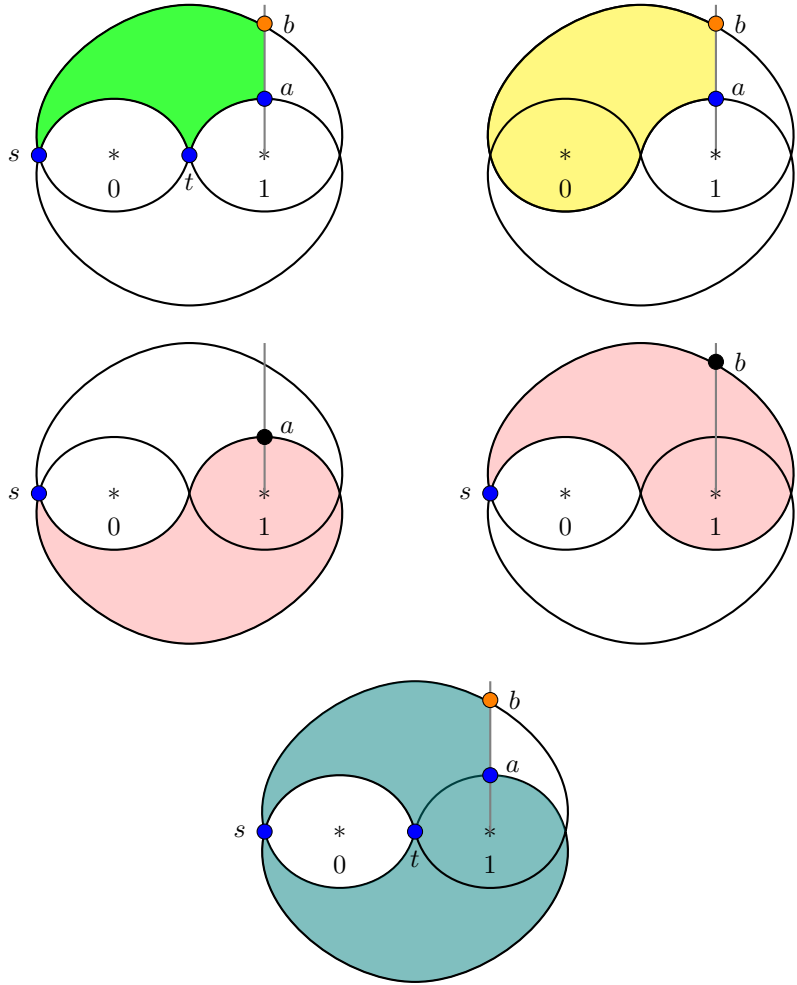

We now state the analogue of Lemma \ref{lem:mu0 disc classification in 1d}, where we now refer to Figure \ref{fig:Seidel-Lagrangian-2-and-4-gon}:
\begin{lem}
  \label{lem:mu1-disc-classification-1d}
  	Any nonconstant holomorphic map $u\colon \bR \times [0,1] \to\bC$ mapping $\bR \times \{0\}$ to $L_1$ and $\bR \times \{1\}$ to $L^+_{m,n}$, and with inputs along $L_1$ labelled by the self-intersections $\{s,t\}$ of $L_1$ has image given by one of the following four possibilities:
 	\begin{description}
		\item[Green]\label{eq:mu1 4-gon} A 4-gon in the complement of $0$ and $1$, with inputs $\{a,t,s\}$ and output $b$.
		\item[Yellow]\label{eq:mu1 2-gon through 0} A bi-gon intersecting $0$ with input $a$ and output $b$.
		\item[Pink]\label{eq:mu1 1-gon through 1} A 1-gon intersecting $1$ with (i) input  $\{a,s\}$ and output $a$ or (ii) input $\{b,s\}$ and output $\{b\}$.
		\item[Blue]\label{eq:mu1 5-gon} A 5-gon intersecting $1$ with inputs $\{a,s,t,s\}$ and output $\{b\}$.
	\end{description}
	\qed
\end{lem}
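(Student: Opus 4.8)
The plan is to treat this as a purely combinatorial classification of holomorphic polygons into the one-dimensional target $\bC$, in exact parallel with Lemma \ref{lem:mu0 disc classification in 1d}. The essential simplification is that the target is a Riemann surface: by the open mapping theorem, any nonconstant holomorphic $u \colon \bR \times [0,1] \to \bC$ is an orientation-preserving branched cover onto its image, so $u$ is determined, up to its branching data, by the combinatorial type of that image together with the labelling of its corners. First I would record the decomposition of $\bP^1$ into the regions cut out by $L_1 \cup L^+_{m,n}$, remembering the three distinguished points $0$, $1$, and $\infty$; the latter two are the poles of the meromorphic extension of $\Omega_1 = d\log(w-1)$, while $\Omega_1$ is regular and nonvanishing at $0$. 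The corners available to $u$ are the self-intersections $s$, $t$, $c$ of $L_1$ and the two points $a$, $b$ of $L_1 \cap L^+_{m,n}$, with $\deg a = 0$ and $\deg b = 1$.

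Next I would set up the bookkeeping that limits which regions can appear, where two constraints do the work. The first is the grading: the poles of $\Omega_1$ at $1$ and $\infty$ together with its regularity at $0$ force $\deg s = \deg t = 0$ and $\deg c = 2$ as recalled before the statement, and rigidity of the configuration then bounds both the multiplicity with which $u$ may wrap a puncture and the number of admissible corners. The second is energy: since $L_1$ and $L^+_{m,n}$ are exact, the area of $u$ equals a fixed difference of primitives at its input and output ends, so only finitely many combinatorial types can occur. With these bounds in place, the enumeration is organised by the intersection number of the image with the distinguished points: images disjoint from $0$ and $1$ produce the \textbf{Green} $4$-gon with corners $a, t, s, b$; images meeting the regular point $0$ produce the \textbf{Yellow} bi-gon from $a$ to $b$; and images meeting the pole $1$ produce the two \textbf{Pink} $1$-gons and the \textbf{Blue} $5$-gon.

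For each surviving combinatorial type, existence and uniqueness of the holomorphic representative will follow from the Riemann mapping theorem once the turning angles at the corners and the asymptotics at the ends are fixed, precisely as the teardrops and polygons of Lemma \ref{lem:mu0 disc classification in 1d} were produced. Conceptually, the passage from that disc classification to the present strip classification amounts to cutting each disc of the earlier lemma along the arc $L^+_{m,n}$, which excises the degree-$2$ output corner $c$ and introduces in its place a boundary segment joining $a$ to $b$; this is exactly what turns the Green $3$-gon, the Yellow monogon, the Pink $1$-gon, and the Blue $4$-gon of Lemma \ref{lem:mu0 disc classification in 1d} into the four strips listed here, as one sees by comparing the shaded regions of Figures \ref{fig:Seidel-Lagrangian-1-and-3-gon} and \ref{fig:Seidel-Lagrangian-2-and-4-gon}.

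I expect the main obstacle to be exhaustiveness, namely ruling out every configuration absent from the list. The dangerous cases are polygons that wrap a puncture with higher multiplicity, that meet more than one distinguished point, or that carry extra corners at $s$, $t$, or $c$. I would exclude these exactly as in Proposition \ref{prop:m0-disc-classification}: a repeated factor of $c$ or a nontrivial branched cover forces an input or output whose total degree is incompatible with the grading constraint above, while the maximum principle, applied to the coordinate on $\bC$ away from each puncture, prevents the image from escaping the bounded regions adjacent to $L^+_{m,n}$. Once these possibilities are eliminated, the finitely many admissible types are precisely the Green, Yellow, Pink, and Blue polygons, and checking their corner data against Figure \ref{fig:Seidel-Lagrangian-2-and-4-gon} completes the argument.
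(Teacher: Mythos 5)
The paper offers no written proof of this lemma: it is stated with a \qed\ and justified only by inspection of Figure \ref{fig:Seidel-Lagrangian-2-and-4-gon}, exactly as Lemma \ref{lem:mu0 disc classification in 1d} is. Your proposal therefore supplies an argument where the paper has none, and the argument you give is the standard and correct one: since the target is one-dimensional, the open mapping theorem forces the image of $u$ to be a union (with multiplicity) of the complementary regions of $L_1 \cup L^+_{m,n}$ in $\bC$, finite energy excludes the unbounded regions, and the hypothesis that the only corners on $L_1$ lie over $s$ and $t$ (together with the requirement that the ends converge to $a$ or $b$) pins down the admissible multiplicity functions, which are exactly the four shaded regions of the figure; your observation that each is obtained by slitting the corresponding disc of Lemma \ref{lem:mu0 disc classification in 1d} along the arc of $L^+_{m,n}$ emanating from $1$ matches the figures precisely. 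Two small calibrations are worth making. First, the bounding-cochain insertions are the odd generators $\partial_s, \partial_t$ of degree $1$, not the even generators $s,t$ of degree $0$, so the index bookkeeping in your second paragraph should be phrased in those terms; more importantly, this lemma imposes no rigidity or index constraint at all, so the exhaustiveness argument should rest on the combinatorics of corner placements and finite energy rather than on degree counts --- the degree arguments you import from Proposition \ref{prop:m0-disc-classification} properly belong to the subsequent Proposition \ref{prop:m1-disc-classification}, where multiply covered and multi-region configurations are excluded from contributing to the differential (note, e.g., that the Blue strip survives the present classification and is only discarded there). Second, the Riemann-mapping existence, uniqueness, and regularity statements you invoke are likewise not part of this lemma; the paper houses them in the analogue of Lemma \ref{lem:regularity_discs}. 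Neither point is a gap in substance --- your sketch proves the statement and is consistent with the style of argument the paper uses for the neighbouring results.
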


We then have the analogue of Proposition \ref{prop:m0-disc-classification}
\begin{prop}\label{prop:m1-disc-classification}
	Any holomorphic disc $u$ contributing to the differential in the Floer complex of $L_{m,n}$ equipped with a bounding cochain in $\Def_{m,n}$ and $L^+_{m,n}$ is nonconstant in at most two of the factors
	\begin{equation}
	\bC_{q_1}, \dotsc, \bC_{q_n}, \bC^m
	\end{equation}
	and those factors must be consecutive in the above ordering. These discs come in one of three types.
	\begin{enumerate}
		\item \label{case:m1-disc-0-intersection}$u$ does not intersect $D$. It has one nonconstant factor, and that factor is a Green disc from Lemma \ref{lem:mu1-disc-classification-1d}.
    \item\label{case:m1-rho-disc} $u$ intersects $D$, and it has two nonconstant $q$ factors. The first of those factors is a Yellow disc in $\bC_{q_j}$, and the second is a Pink curve in $\bC_{q_{j+1}}$.
    \item\label{case:m1-hybrid-disc} $u$ intersects $D$, and its first nonconstant factor is a Yellow disc in $\bC_{q_n}$. Its $\bC^m$ component is either constant or has Maslov index $2$, in which case it is non-trivial in exactly one component.
	\end{enumerate}
\end{prop}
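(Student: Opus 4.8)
The plan is to mirror the proof of Proposition~\ref{prop:m0-disc-classification} line by line, with two substitutions: the constraint ``the output has degree $2$'' is replaced by ``the differential raises degree by exactly one'', and the one-dimensional classification of Lemma~\ref{lem:mu0 disc classification in 1d} is replaced by its strip analogue, Lemma~\ref{lem:mu1-disc-classification-1d}. The first step is the splitting. Since $L^+$, like $L_{m,n}$, is contained in the complement of $D$, the unique continuation and removable singularity argument used to split discs with boundary on $L_{m,n}$ applies verbatim to strips with one boundary on $L_{m,n}$ and the other on $L^+$: such a strip is determined by its projection to $\bC^m \times \bC^n$, which splits as a product of factor strips, one in each $\bC_{q_j}$ and one in $\bC^m$. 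Each $\bC_{q_j}$ factor is then one of the four types of Lemma~\ref{lem:mu1-disc-classification-1d}, while the $\bC^m$ factor is a strip between the Clifford torus and the positive real locus, which is standard.

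Next I would carry out the degree bookkeeping. Recording in each factor the difference in degree between the outgoing and incoming generators (with $a$ in degree $0$ and $b$ in degree $1$; the bounding cochain corners at $s$ and $t$ contribute nothing to this difference, since each raises the number of inputs by one and so is absorbed into the drop in degree of the higher $A_\infty$ operation), Lemma~\ref{lem:mu1-disc-classification-1d} shows that the Green, Yellow, and Blue strips each raise the degree by one, while the Pink strips and the constant strips preserve it. As the differential raises the total degree by exactly one, precisely one factor may be Green, Yellow, or Blue, and every other factor must be Pink or constant.

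I would then analyze the intersection with $D$ exactly as in Proposition~\ref{prop:m0-disc-classification}, the key input being Lemma~\ref{lem:iterative_description_divisors}: because $D_{j+1} = q_j^{-1}(0) = q_{j+1}^{-1}(1)$, an interior point of $u$ mapping to $D_{j+1}$ forces the $\bC_{q_j}$ factor to pass through $0$ and, simultaneously, the neighbouring factor to pass through $1$ --- this is precisely the matching condition at the marked point exploited in Lemma~\ref{lem:regularity_discs}. If $u$ is disjoint from $D$, then no factor meets $0$ or $1$, so the unique degree-raising factor is Green and all others are constant, which is case~\ref{case:m1-disc-0-intersection}. If instead $u$ meets $D$, say along $D_{j+1}$, the matching condition produces a Yellow strip through $0$ in $\bC_{q_j}$ together with a partner through $1$: for $j<n$ the partner is a Pink strip in $\bC_{q_{j+1}}$, giving case~\ref{case:m1-rho-disc}, while for $j=n$ the partner component of $D$ is $\prod x_i = 0$, so the $\bC^m$ factor is either constant (when $x_0=0$) or a Maslov index $2$ disc nontrivial in a single coordinate (when $x_i = 0$ for some $i \neq 0$), giving case~\ref{case:m1-hybrid-disc}. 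In both cases the two nonconstant factors are consecutive in the ordering, as claimed.

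The delicate point, as for $\mu^0$, is to exclude the Blue strip and, more generally, every intersection with $D$ of multiplicity larger than one. The Blue $5$-gon is degree-raising and meets $D_j$ through the point $1$; but then the matching condition forces the $\bC_{q_{j-1}}$ factor to pass through $0$, hence to be a Yellow strip, which is also degree-raising --- contradicting that only one factor can raise the degree. Intersection numbers greater than one are ruled out by the same two observations as in the proof of Proposition~\ref{prop:m0-disc-classification}: a multiply covered factor would force an input given by the class Poincar\'e dual to $c$, which is not among the allowed inputs $\{a,s,t\}$, and meeting two distinct components of $D$ would produce two degree-raising factors, again violating the total degree count. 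Combining these exclusions with the case analysis above yields exactly the three types in the statement. I expect this exclusion step to be the main obstacle, since it is where the global degree constraint must be played off against the local matching conditions along $D$.
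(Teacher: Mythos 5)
Your proposal is correct and follows exactly the route the paper takes: the paper's proof simply declares the argument "entirely analogous" to Proposition \ref{prop:m0-disc-classification} and then singles out the Blue disc, which it excludes by the same observation you make — the accompanying Yellow disc in the previous factor forces a total degree jump of $2$, so that homotopy class cannot contribute to the differential. Your write-up just makes explicit the splitting, degree bookkeeping, and matching-along-$D$ steps that the paper leaves implicit by reference.
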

\begin{proof}
  The argument is entirely analogous to the one for Proposition \ref{prop:m0-disc-classification}; we shall only remind the reader that the Blue discs do not contribute because there must be an accompanying Yellow disc in the previous factor, which implies that the difference between the degrees of the input and the output generators of the Floer cochains of  $L_{m,n}$ and $L^+_{m,n}$  is $2$, so that this homotopy class does not contribute to the differential.
\end{proof}

There is an existence and regularity statement for these discs which is exactly analogous to Lemma \ref{lem:regularity_discs}, which gives the following result, corresponding to Lemma \ref{lem:curvature_vanishes}: 
\begin{lem} \label{lem:computation-differential} 
  The differential in the Floer cochains of  $L_{m,n}$ and $L^+_{m,n}$  associated to an element of $\mathrm{Def}_{m,n}$ vanishes if and only if
  \begin{align} \label{eq:equation-moduli-1}
    u_j v_j - u_{j+1} & = 0 \textrm{ for } 0 \leq j < n \\ \label{eq:equation-moduli-2}
        u_1 \cdot \prod_{j=1}^{n} v_j & = 1 +  \sum_{i=1}^{m} \rho_i.    
  \end{align}
\end{lem}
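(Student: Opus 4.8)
The plan is to compute the deformed differential on $CF(L_{m,n},L^+)$ directly from the disc classification in Proposition \ref{prop:m1-disc-classification} and read off the vanishing condition. Since $L_{m,n}=T^m\times L_1^n$ and $L^+$ is a product, the generators of the complex are indexed by a choice of intersection point in each pair-of-pants factor---either $a_j$ (degree $0$) or $b_j$ (degree $1$)---together with the single generator of the torus factor; so the underlying group is the tensor product of the $n$ two-step complexes $\langle a_j,b_j\rangle$ with the one-dimensional torus factor, graded by the number of $b$'s. The first step is to note that, by Proposition \ref{prop:m1-disc-classification}, every disc contributing to the differential raises this grading by exactly one and does so by converting a single $a_j$ into $b_j$ while preserving the generator of every other factor: in case \ref{case:m1-rho-disc} the Pink curve preserves its generator and in case \ref{case:m1-hybrid-disc} the $\bC^m$ component does, so only the Green or Yellow piece raises degree. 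Hence the differential is a sum of ``edge maps'', one per factor $j$, and since an edge map in factor $j$ is nonzero only between generators differing precisely in the $j$-th coordinate, distinct edges contribute to distinct matrix entries; therefore $d=0$ if and only if the total coefficient of each edge map vanishes.

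The second step is to tabulate, for each $j$, the discs flipping $a_j$ to $b_j$. For $1\le j\le n-1$ there are exactly two: the Green $4$-gon of Lemma \ref{lem:mu1-disc-classification-1d} lying entirely in $\bC_{q_j}$, which has corners $s_j,t_j$ and hence weight $u_jv_j$; and the hybrid disc of case \ref{case:m1-rho-disc}, whose Yellow factor in $\bC_{q_j}$ performs the flip while its Pink factor in $\bC_{q_{j+1}}$ carries the single corner $s_{j+1}$ and hence weight $u_{j+1}$. The two sub-cases (i) and (ii) of the Pink curve realize $\epsilon_{j+1}=a_{j+1}$ and $\epsilon_{j+1}=b_{j+1}$ respectively, so this coefficient is independent of the remaining coordinates. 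For $j=n$ the Green $4$-gon again contributes $u_nv_n$, while the hybrid disc of case \ref{case:m1-hybrid-disc} pairs the Yellow factor in $\bC_{q_n}$ with the $\bC^m$ component, contributing $1+\sum_{i=1}^m\rho_i$ exactly as in the curvature computation of Lemma \ref{lem:curvature_vanishes}: the summand $1$ from the disc meeting $x_0=0$ and each $\rho_i$ from the Maslov $2$ disc meeting $x_i=0$. Existence, uniqueness and regularity of all these configurations is the analogue of Lemma \ref{lem:regularity_discs} invoked before the statement, so each contributes $\pm 1$ times its weight.

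The third step, which I expect to be the main obstacle, is the sign analysis. Unlike the curvature computation---where the two discs flipping to $c_j$ were exact reflections of one another and cancelled---here the two discs contributing to a given edge are genuinely different (a Green $4$-gon and a hybrid disc) and carry different weights, so the crux is to verify, using the combinatorial sign rule of \cite{Seidel2008,Abouzaid2008} as in Lemma \ref{lem:curvature_vanishes}, that they appear with opposite signs. Granting this, the coefficient of the edge map in factor $j$ is $\pm(u_jv_j-u_{j+1})$ for $j<n$ and $\pm(u_nv_n-(1+\sum_i\rho_i))$ for $j=n$, so $d=0$ is equivalent to the relations $u_jv_j=u_{j+1}$ for $1\le j\le n-1$ together with $u_nv_n=1+\sum_{i=1}^m\rho_i$.

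Finally I would record that this system is precisely \eqref{eq:equation-moduli-1}--\eqref{eq:equation-moduli-2}. The relations $u_{j+1}=u_jv_j$ are \eqref{eq:equation-moduli-1}, and they telescope to $u_n=u_1\prod_{j=1}^{n-1}v_j$; substituting into the top relation $u_nv_n=1+\sum_i\rho_i$ yields $u_1\prod_{j=1}^n v_j=1+\sum_{i=1}^m\rho_i$, which is \eqref{eq:equation-moduli-2}. The existence and regularity inputs are inherited verbatim from Lemma \ref{lem:regularity_discs}, and the bookkeeping of which factor is flipped is forced by the degree constraint, so the only delicate point really is the sign comparison in the second and third steps.
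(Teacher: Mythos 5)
Your proposal is correct and follows essentially the same route as the paper: the paper's proof simply asserts the formula $a_j \mapsto (u_jv_j - u_{j+1})\,b_j$ for $j<n$ and $a_n \mapsto (u_nv_n - (1+\sum_i\rho_i))\,b_n$, which is exactly your edge-map computation before telescoping, with the disc bookkeeping drawn from Proposition \ref{prop:m1-disc-classification} and Lemma \ref{lem:mu1-disc-classification-1d} just as you do. The only point where the paper is marginally more specific is the sign you defer to in your third step: it pins down the relative minus sign by placing the marked point recording the bounding $\Spin$ structure on the upper segment of $L_1$ between $s$ and $t$, rather than leaving the opposite-sign claim as a "granting this".
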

\begin{proof}
  We compute that the differential is given by
   \begin{equation}
    a_j \mapsto
    \begin{cases}
      \left(u_j v_j - u_{j+1} \right) \cdot b_j    & \textrm{ if }  j < n \\
      \left( u_{n} v_{n} - \left( 1 +  \sum_{i=1}^{m} \rho_i \right) \right) \cdot b_n & \textrm{ if } j=n,
    \end{cases}
  \end{equation}
  where the specific sign is achieved by placing the marked point on $L_1$, which records the fact that we are using the bounding $\Spin$ structure, along the upper segment connecting $s$ and $t$. 
\end{proof}

Define $\cM_{m,n} \subset  \mathrm{Def}_{m,n}$ to consist of those bounding cochains on which the differential in the Floer complex between $L^+_{m,n}$ and $L_{m,n}$ identically vanishes.
\begin{rem}
  In fact, one can prove from the above description of the differential and from the fact that $ L^+_{m,n}$ corresponds under mirror symmetry to the structure sheaf, which we prove in Section \ref{sec:homol-mirr-symm} below, that $\cM_{m,n} $ corresponds to the set of elements of $\mathrm{Def}_{m,n} $ whose associated object of the Fukaya category is non-trivial.
\end{rem}
\begin{cor} \label{cor:moduli_object_equivalent_to_mirror}
  The correspondence $x_0 = u_1$, $x_i = v_i$ for $1 \leq i \leq n$, and $y_j = \rho_j$ for $1 \leq j \leq m$ defines an isomorphism of $\cM_{m,n} $ with $X_{n,m}$. \qed
\end{cor}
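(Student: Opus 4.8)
The plan is to read the result straight off the explicit equations produced in Lemma \ref{lem:computation-differential}. By definition $\cM_{m,n}$ is exactly the locus in $\mathrm{Def}_{m,n}$ on which the differential between $L_{m,n}$ and $L^+_{m,n}$ vanishes, so $\cM_{m,n}$ is the closed subscheme of $\mathrm{Def}_{m,n}$, with coordinates $\rho_1, \ldots, \rho_m \in \bk^*$ and $u_1, \ldots, u_n, v_1, \ldots, v_n \in \bk$, cut out by the relations \eqref{eq:equation-moduli-1} and \eqref{eq:equation-moduli-2}. The entire content of the corollary is then the observation that these relations, after a change of variables, describe $X_{n,m}$.

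First I would eliminate the redundant coordinates. The relations \eqref{eq:equation-moduli-1}, namely $u_{j+1} = u_j v_j$, solve recursively to express each of $u_2, \ldots, u_n$ as the monomial $u_1 v_1 \cdots v_{j}$ in the remaining variables. Hence these coordinates carry no independent information, and projecting them away identifies $\cM_{m,n}$ with the subscheme of $(\bk^*)^m \times \bk \times \bk^n$, with coordinates $\rho_j$, $u_1$, and $v_1, \ldots, v_n$, defined by the single surviving relation $u_1 \prod_{j=1}^n v_j = 1 + \sum_{i=1}^m \rho_i$. This is precisely \eqref{eq:equation-moduli-2} once the recursive solution $u_n v_n = u_1 \prod_{j} v_j$ is substituted.

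Next I would match this to the mirror. Applying the stated substitution $x_0 = u_1$, $x_i = v_i$ for $1 \le i \le n$, and $y_j = \rho_j$ for $1 \le j \le m$, the surviving relation becomes $\prod_{i=0}^n x_i = 1 + \sum_{j=1}^m y_j$, which is exactly the defining equation of $X_{n,m}$. The coordinate ranges line up under the swap of $m$ and $n$: the $n+1$ variables $u_1, v_1, \ldots, v_n$ range over $\bk$ just as the $n+1$ variables $x_0, \ldots, x_n$ do, while the $m$ invertible variables $\rho_j$ match the $\bk^*$-valued coordinates $y_1, \ldots, y_m$. Since both the substitution and its inverse (which recovers $u_2, \ldots, u_n$ as monomials in $u_1$ and the $v_j$) are regular maps, this is an isomorphism of affine varieties.

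There is essentially no serious obstacle at this stage: the geometric and analytic work has already been carried out in Lemma \ref{lem:computation-differential}, and the corollary is a matter of algebraic bookkeeping. The only points demanding care are that the recursion \eqref{eq:equation-moduli-1} eliminates the auxiliary coordinates $u_2, \ldots, u_n$ cleanly, and that the index conventions are tracked so that the two integer parameters are transposed correctly, i.e.\ that one lands on $X_{n,m}$ rather than $X_{m,n}$. This transposition is exactly the expected mirror-symmetry exchange of the roles of $m$ and $n$.
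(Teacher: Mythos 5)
Your proposal is correct and matches the paper's intent exactly: the corollary is stated with \qed precisely because it is immediate bookkeeping from Lemma \ref{lem:computation-differential}, namely eliminating $u_2,\dotsc,u_n$ via the recursion \eqref{eq:equation-moduli-1} and recognising \eqref{eq:equation-moduli-2} as the defining equation of $X_{n,m}$ under the stated substitution. The only minor point worth noting is that the paper's index range ``$0 \leq j < n$'' in \eqref{eq:equation-moduli-1} is a typo for $1 \leq j < n$, which you have implicitly (and correctly) used.
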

\begin{rem} \label{rem:proof_mirror_tori}
  Returning to Remark \ref{rem:discussion_mirror_tori}, recall that the Lagrangian $L_{m,n}^{i}$ is the product of $T^m$ with the Lagrangians obtained by performing Polterovich surgery on $L_{m,n}$ at $s$ for coordinates labelled by $0 \leq k \leq i$, and at $t$ for coordinates labelled by $i < k \leq n$. The surgery triangle \cite{FukayaOhOhtaOno2009} of Fukaya, Oh, Ohta and Ono identifies the isomorphism class in the Fukaya category of this surgery (with an appropriate local system) with the result of equipping $L_{m,n}$ with a bounding cochain with $u_0, \ldots, u_i \neq 0$, and $v_{i+1}, \ldots, v_n \neq 0$. Using the above correspondence, together with Equation \eqref{eq:equation-moduli-1}, this is the same as setting $x_0 x_1 \cdots x_k \neq 0$ for $k < i$, and $x_k \neq 0$ for $k > i$, which corresponds to setting all coordinates except for $x_i$ to be non-zero as asserted.
\end{rem}

We end this section by noting that the equivalence in Corollary \ref{cor:moduli_object_equivalent_to_mirror} is at this stage just an abstract isomorphism, and we do not know yet that it is induced by the mirror equivalence. We shall presently see that we have a natural map between these spaces. To construct it, we need the following basic Lemma:

\begin{lem}
  Let $R$ be the ring of functions on a smooth affine variety over a field $\bfk$, and let
  \begin{equation}
    M_+ \to M \to M_0    
  \end{equation}
be a short exact sequence of $A_\infty$ graded $R$-modules which are finite over $\bfk$, and with trivial differential. If $M_0$ is supported in degree $0$, and $M_+$ is supported in strictly positive degrees, then $M_0$ is a summand of $M$.
\end{lem}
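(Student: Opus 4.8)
The plan is to produce an explicit \emph{strict} section of the projection $\pi\co M\to M_0$ in the category of $A_\infty$ $R$-modules, and thereby split the sequence on the nose. Since we work over a field, the underlying sequence of graded $\bfk$-vector spaces splits; combining this with the hypotheses that $M_0$ is concentrated in degree $0$ and $M_+$ in strictly positive degrees, $M$ is concentrated in non-negative degrees with $M^0=M_0$ and $M^d=M_+^d$ for $d\geq 1$. Let $\iota\co M_0=M^0\hookrightarrow M$ denote the inclusion of the degree-$0$ part; the goal is to promote $\iota$ to an $A_\infty$-morphism splitting $\pi$.

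First I would record two consequences of the grading. Write $\mu^{1+j}$ for the module operation with one module input and $j$ inputs from $R$; it has cohomological degree $1-j$, and $R$ itself is concentrated in degree $0$. On $M_0$, for $j\geq 2$ the operation $\mu^{1+j}$ would send degree $0$ into negative degree, so $M_0$ is an honest $R$-module with $\mu^{1+j}_{M_0}=0$ for $j\geq 2$. Likewise, any degree-$0$ morphism $g\co M_0\to M$ has higher components $g^{1+j}\co M_0\otimes R^{\otimes j}\to M$ of degree $-j<0$ for $j\geq 1$; since the source sits in degree $0$ and $M$ has no negative part, these vanish. Thus every such $g$ is automatically strict, determined by its leading term $g^1$.

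I then set $g^1=\iota$ and verify the $A_\infty$-morphism equations, which (by strictness) reduce to the requirement that $\iota$ intertwine $\mu^{1+j}_{M_0}$ and $\mu^{1+j}_{M}$ for every $j$. For $j\geq 2$ both sides vanish, the right-hand operation landing in degree $1-j<0$ of $M$ and $\mu^{1+j}_{M_0}$ being zero. For $j=1$ the condition is exactly $R$-linearity of $\iota$, which holds because $\pi$ is a module map that is an isomorphism in degree $0$, forcing the $R$-action on $M^0\subset M$ to agree with that on $M_0$. The one remaining relation, at $j=0$, is the chain-map condition $\mu^1_M\circ\iota=\iota\circ\mu^1_{M_0}$.

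The main obstacle, and the only place the hypotheses are used in an essential way, is precisely this $j=0$ relation. A priori the differential on $M$ may carry an off-diagonal component $M^0=M_0\to M_+^1$; this component is exactly the Yoneda extension class, and indeed a direct computation of the relevant $\Ext^1_{A_\infty\text{-}R}(M_0,M_+)$ identifies it with $\Hom_R(M_0,M_+^1)$, which need not vanish in general. It is the hypothesis that $M$ has trivial differential that forces this class to be zero, making $g=(\iota,0,0,\dots)$ a genuine $A_\infty$-module morphism. Since $\pi\circ g=\id_{M_0}$ by construction, $g$ is a strict section of $\pi$, so $M\cong M_0\oplus M_+$ as $A_\infty$ $R$-modules and $M_0$ is a summand; note that no passage to the idempotent completion is required, as the decomposition is produced directly. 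The smoothness and finiteness hypotheses do not enter this splitting argument beyond ensuring that we are working in a well-behaved setting.
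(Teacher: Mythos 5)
Your proof is correct, and it rests on the same observation as the paper's: the grading forces every possible obstruction to splitting to vanish except for a single map $M_0 \to M_+^1$, which is a component of the differential of $M$ and is therefore zero by hypothesis. The routes differ in packaging. The paper argues obstruction-theoretically: the extension is classified by a class in $\Ext^1(M_0,M_+)$, which decomposes by internal degree as $\bigoplus_d \Ext^{1-d}_R(M_0,M_+^d)$; non-negativity of $\Ext$ for coherent sheaves kills the summands with $d\ge 2$, leaving $\Hom_R(M_0,M_+^1)$, realised by the differential. You instead produce the splitting explicitly, checking that the inclusion $\iota=(\pi^1|_{M^0})^{-1}$ of the degree-zero part is automatically a strict $A_\infty$-morphism. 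This buys you a self-contained argument --- no appeal to the classification of $A_\infty$-extensions by $\Ext^1$ or to negative-$\Ext$ vanishing, and, as you observe, no use of smoothness or finiteness --- and an on-the-nose section rather than a homotopy-class statement. The one step you state slightly quickly is the $j=1$ relation ($R$-linearity of $\iota$): since $\pi$ need not itself be strict, one should note that the two-input component of the $A_\infty$-morphism equation for $\pi$ reduces to $\pi^1\circ\mu^2_M=\mu^2_{M_0}\circ(\pi^1\otimes\id)$ only because all the correction terms involve a differential (of $M$, of $M_0$, or of $R$), each of which vanishes by hypothesis; with that remark added, the argument is complete.
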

\begin{proof}
 The module $M$ is classified by an element of $\Ext^{1}(M_0, M_+)$, but the fact that $\Ext$ groups of coherent sheaves are supported in non-negative degrees, and our support conditions imply that the only contribution comes from a map of sheaves, which would correspond to a differential in $M$.
\end{proof}

\begin{cor}
  The mirror functor defines a map
\begin{equation} \label{eq:mirror-map}
  \cM_{m,n} \to X_{n,m}
\end{equation}
which assigns to $b \in  \cM_{m,n} $ the support of the skyscraper sheaf associated to the degree $0$ component of the Floer group $HF^*\left((L_{m,n},b), L^+_{m,n}\right)$. \qed
\end{cor}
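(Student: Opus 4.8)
The plan is to define the map purely through the mirror equivalence, so that it is manifestly independent of the coordinatewise identification of Corollary \ref{cor:moduli_object_equivalent_to_mirror}, and only afterwards (in the later sections) to compare the two. Write $F$ for the equivalence of Theorem \ref{thm:HMS}, and recall that in Section \ref{sec:homol-mirr-symm} we establish $F(L^+_{m,n}) \cong \mathcal{O}_{X_{n,m}}$. For each $b \in \cM_{m,n}$ I set $E_b := F(L_{m,n},b) \in D^b\Coh(X_{n,m})$. Under $F$ the morphism complex $HF^*\bigl((L_{m,n},b), L^+_{m,n}\bigr)$ is identified with a graded module $M$ over the ring $R := SH^0(X_{m,n}) \cong \Gamma(X_{n,m},\mathcal{O}) = \End(\mathcal{O}_{X_{n,m}})$, the module structure arising from the $A_\infty$-action of the self-morphisms of $L^+_{m,n}$. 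Since $L_{m,n}$ is compact while $L^+_{m,n}$ is proper, the intersection is compact and $M$ is finite-dimensional over $\bfk$.

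First I would record the shape of $M$. Because $b$ lies in $\cM_{m,n}$, Lemma \ref{lem:computation-differential} tells us that the differential of this Floer complex vanishes identically, so $M$ has trivial differential and equals its own cohomology. The generators $a_i$ have degree $0$ and the $b_i$ have degree $1$, and the torus factor contributes a single degree-$0$ generator, so $M$ is concentrated in non-negative degrees. Letting $M_0$ be its degree-$0$ part and $M_+ = M_{>0}$, we obtain a short exact sequence of graded $A_\infty$ $R$-modules $M_+ \to M \to M_0$, all with trivial differential and finite over $\bfk$, in which $M_0$ is supported in degree $0$ and $M_+$ in strictly positive degrees.

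Next I would apply the preceding Lemma to this sequence: since $X_{n,m}$ is smooth and affine, the extension class lives in $\Ext^1$ of coherent sheaves, which vanishes by the support and degree hypotheses, so $M_0$ splits off as a summand of $M$ in $D^b\Coh(X_{n,m})$. This is precisely what is needed to make sense of ``the sheaf associated to the degree-$0$ component'': a priori $M_0$ is only a graded piece of an $A_\infty$-module, and the Lemma promotes it to a genuine object of $D^b\Coh(X_{n,m})$. Being finite over $\bfk$ and concentrated in degree $0$, $M_0$ is a torsion sheaf with $0$-dimensional support, i.e.\ a skyscraper sheaf, and I would define the image of $b$ under \eqref{eq:mirror-map} to be its support. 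That this support is a single closed point of $X_{n,m}$ — rather than a finite subset — is the substantive input: it reflects the fact that $E_b$ is assembled from copies of one skyscraper, so that $M_0$ is annihilated by a single maximal ideal. This is the point at which the disc counts underlying Lemma \ref{lem:computation-differential}, repackaged invariantly, enter.

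The resulting assignment $b \mapsto \operatorname{supp} M_0$ is natural: it refers only to $F$, to the distinguished object $L^+_{m,n}$, and to the $R$-module $HF^0\bigl((L_{m,n},b), L^+_{m,n}\bigr)$, none of which invoke the explicit identification of Corollary \ref{cor:moduli_object_equivalent_to_mirror}. The main obstacle I anticipate is not the splitting, which the preceding Lemma dispatches cleanly, but controlling the $R$-module structure on $M_0$ precisely enough to confirm that it is a single skyscraper and that its support varies with $b$ exactly as dictated by the correspondence $x_0 = u_1$, $x_i = v_i$, $y_j = \rho_j$; reconciling this intrinsic support with the abstract isomorphism of Corollary \ref{cor:moduli_object_equivalent_to_mirror} is the task left to the subsequent sections.
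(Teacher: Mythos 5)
Your argument is correct and follows the paper's intended route: the corollary carries a \qed precisely because the preceding splitting lemma is the entire content, and you apply it exactly as the paper does, with the mirror functor supplying the $R$-module structure and the lemma promoting the degree-zero piece to an honest summand in $D^b\Coh(X_{n,m})$. One remark: the step you single out as "the substantive input" --- that the support is a single closed point rather than a finite set --- needs no disc-count input at all, since the degree-zero part of the complex is spanned by the single generator $a_1 \otimes \cdots \otimes a_n$ (tensored with the unique degree-zero generator from the torus factor), so $M_0$ is one-dimensional over $\bfk$, the $R$-action is a ring map $R \to \bfk$, and $M_0 \cong R/\mathfrak{m}$ is automatically a skyscraper at a single $\bfk$-point.
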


\subsection{The mirror map via symplectic cohomology}
\label{sec:mirror-map-via}

Our goal at this stage is to show that the mirror map in Equation \eqref{eq:mirror-map} is an isomorphism of varieties. Since the varieties are affine, it is easiest to state this in terms of rings of functions: let $\Gamma(X_{n,m})$ denote the ring of functions on $X_{n,m}$, which we know by homological mirror symmetry (i.e. the results of Section \ref{sec:homol-mirr-symm}) to be isomorphic to the self-Floer cohomology of $L^+_{m,n}$, as well as to the degree $0$ part of the symplectic cohomology $SH^0(X_{m,n})$. Let $\Gamma(\cM_{m,n})$ denote the ring of functions on $\cM_{m,n}$. Floer theory defines a closed-open map, that we call the \emph{mirror map},
\begin{equation} \label{eq:mirror_map-SH}
   SH^0(X_{m,n}) \to \Gamma(\cM_{m,n}).
\end{equation}

The fact that this agrees with the mirror map defined using Lagrangian Floer cohomology is given by the homotopy in Figure \ref{fig:mirror-map-pictures}.

\begin{figure}
  \centering
  \begin{tikzpicture}
    \begin{scope}
      \draw[thick] (-2,-.5) -- (2,-.5) ;
      \draw[thick] (-2,.5) -- (2,.5) ;
      \node[label = below:{$L_{m,n}$}] (Lmn) at (0,-.5) {};
      \node[label = above:{$L^+_{m,n}$}] (Lmn) at (0,.5) {};
      \node[label = right:{$SH^*(X_{m,n})$}] (SH) at (0,0) {};
      \filldraw[black] (SH) circle (2pt);
    \end{scope}
     \begin{scope}[shift = {(4.5,0)}]
      \draw[thick] (-2,-.5) -- (2,-.5) ;
      \draw[thick] (-2,.5) -- (2,.5) ;
        \node (SH) at (0,1) {};
      \draw[thick] (SH) circle (.5);
      \filldraw[black] (SH) circle (2pt);
      \draw[->] (0,.55) -- (0,.35);
      \node[label = below:{$L_{m,n}$}] (Lmn) at (0,-.5) {};
      \node[label = above:{$L^+_{m,n}$}] (Lmn) at (1,.5) {};
    \end{scope}
         \begin{scope}[shift = {(-4.5,0)}]
      \draw[thick] (-2,-.5) -- (2,-.5) ;
      \draw[thick] (-2,.5) -- (2,.5) ;
       \node (SH) at (0,-1) {};
      \draw[thick] (SH) circle (.5);
      \filldraw[black] (SH) circle (2pt);
      \draw[->] (0,-.55) -- (0,-.35);
      \node[label = below:{$L_{m,n}$}] (Lmn) at (1,-.5) {};
      \node[label = above:{$L^+_{m,n}$}] (Lmn) at (0,.5) {};
       \end{scope}
 \end{tikzpicture}
  \caption{The action of symplectic cohomology on Floer cohomology}
  \label{fig:mirror-map-pictures}
\end{figure}
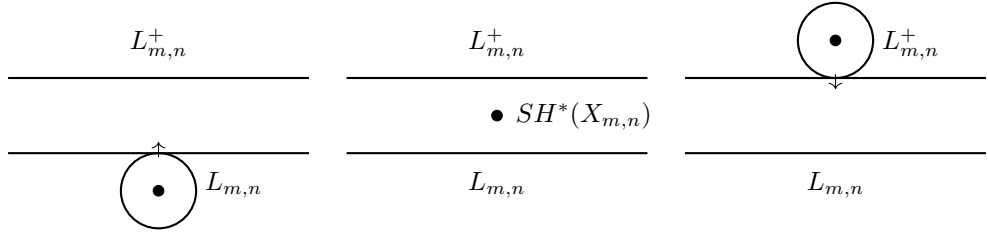

\begin{prop} \label{prop:mirror_map_iso}
  The mirror map in Equation \eqref{eq:mirror_map-SH} is an isomorphism.
\end{prop}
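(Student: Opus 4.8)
The plan is to identify both sides of the map \eqref{eq:mirror_map-SH} explicitly as commutative rings and to check that the closed-open map respects the presentations. First I would recall, from homological mirror symmetry (Theorem \ref{thm:HMS}, whose proof is in Section \ref{sec:homol-mirr-symm}), that $SH^0(X_{m,n}) \cong \Gamma(X_{n,m})$, the ring of functions on the affine variety $X_{n,m} = \{ \prod_{i=0}^{n} x_i = 1 + \sum_{j=1}^{m} y_j\}$. On the target side, Corollary \ref{cor:moduli_object_equivalent_to_mirror} already furnishes an abstract isomorphism $\cM_{m,n} \cong X_{n,m}$, so $\Gamma(\cM_{m,n})$ is identified with the same ring of functions; the content of the proposition is that the \emph{specific} map coming from the closed-open string operation realizes this identification, rather than some other automorphism. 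The strategy is therefore to compute where the closed-open map sends the generators of $SH^0(X_{m,n})$ and to compare against the coordinates $x_0 = u_1$, $x_i = v_i$, $y_j = \rho_j$ appearing in Corollary \ref{cor:moduli_object_equivalent_to_mirror}.

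The key computational step is the one encoded in Figure \ref{fig:mirror-map-pictures}: the action of a class in $SH^*(X_{m,n})$ on the Floer cohomology $HF^*\bigl((L_{m,n},b), L^+_{m,n}\bigr)$ is computed by degenerating the domain (a strip with an interior puncture asymptotic to a Reeb orbit) in two ways, which shows that the geometric closed-open action agrees with the algebraic map defined via Lagrangian Floer theory. Concretely, I would argue that the Hamiltonian generators of $SH^0$ associated to the coordinate functions act on the generator $a = a_1 \otimes \cdots \otimes a_n$ by multiplication by the corresponding monomials $u_j$, $v_j$, $\rho_i$ read off from the differential computed in Lemma \ref{lem:computation-differential}. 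Since the differential equations \eqref{eq:equation-moduli-1}--\eqref{eq:equation-moduli-2} are precisely the defining relations identifying $\cM_{m,n}$ with $X_{n,m}$, matching the image of the generators of $SH^0$ against these coordinate functions will show that the mirror map is exactly the isomorphism of Corollary \ref{cor:moduli_object_equivalent_to_mirror} on the level of function rings. Because a map between affine varieties that induces an isomorphism of coordinate rings is an isomorphism, this suffices.

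The main obstacle, I expect, is the first half: establishing that the closed-open map is well-defined and compatible with the earlier algebraic structures, and in particular that the count of the relevant discs-with-an-interior-insertion localizes to the factorwise counts already classified in Proposition \ref{prop:m1-disc-classification} and Lemma \ref{lem:regularity_discs}. One must verify that inserting a symplectic cohomology class does not introduce new disc configurations beyond the Green/Yellow/Pink families, and that the regularity and transversality arguments of Lemma \ref{lem:regularity_discs} extend to the domains with an interior puncture. A secondary subtlety is checking surjectivity at the level of rings: a priori the closed-open map need only be a ring homomorphism, so I would argue that it hits a generating set of $\Gamma(\cM_{m,n})$ — namely the functions $u_1, v_1, \dots, v_n$ and $\rho_1, \dots, \rho_m$ — which together with the relations \eqref{eq:equation-moduli-1}--\eqref{eq:equation-moduli-2} generate the whole ring, so that the induced map is surjective, and then use that both rings are the coordinate rings of the same smooth affine variety of the same dimension to conclude that it is an isomorphism.
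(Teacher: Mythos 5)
Your overall skeleton matches the paper's: establish that the two rings are abstractly isomorphic, prove the mirror map is surjective by hitting a generating set of $\Gamma(\cM_{m,n})$, and then deduce injectivity by a formal commutative-algebra argument. However, there is a genuine gap in how you propose to get surjectivity, which is the entire content of the proposition. You plan to ``compute where the closed-open map sends the generators of $SH^0(X_{m,n})$'' via Hamiltonian generators ``associated to the coordinate functions.'' This presupposes that you already know which classes in $SH^0$ correspond to which coordinate functions on the mirror --- but that identification is precisely what is at stake, so the argument as stated is circular (or at least has no starting point: HMS gives only an abstract isomorphism $SH^0(X_{m,n})\cong \Gamma(X_{n,m})$, with no explicit cocycle representatives whose closed-open images you could count). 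The paper runs the argument in the opposite direction: for each target function $u_1$, $v_j$, and $\rho_i^{\pm 1}$ (note you omitted the inverses $\rho_i^{-1}$, which are needed since $\rho_i\in\bk^*$), Lemma \ref{lem:lift_elements_to_SH} constructs an explicit \emph{lift} to $SH^0(X_{m,n})$ as the class of a divisor in a suitably chosen partial compactification of $X_{m,n}$ (a different compactification for each function, following \cite{GanatraPomerleano2021}), and identifies its image in $HF^*\left((L_{m,n},b),L^+_{m,n}\right)$ by counting discs meeting that divisor exactly once --- these are exactly the Pink/Yellow/Clifford-torus discs already classified. Without this construction, or something equivalent, surjectivity is not established.

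Your concluding step is also weaker than it needs to be. ``Both rings are coordinate rings of the same smooth affine variety of the same dimension'' does not by itself force a surjection to be injective; you would additionally need irreducibility of $X_{n,m}$ so that a proper closed subvariety has strictly smaller dimension. The paper instead uses the cleaner fact that a surjective endomorphism of a Noetherian ring is injective (otherwise the preimages of the kernel under iterates form an infinite strictly ascending chain of ideals), applied after composing with the abstract isomorphism. I recommend adopting both the lifting mechanism and the Noetherian argument.
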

\begin{rem}
  Both the statement and the proof of this result only involve compact Lagrangians. This gives us flexibility in our choice of model for symplectic cohomology classes as we shall not need to construct the closed-open map for general (non-compact) Lagrangians, which usually involves more delicate arguments to ensure compactness of moduli spaces of holomorphic curves.
\end{rem}
Our proof of this result will be slightly roundabout: as discussed in Corollary \ref{cor:moduli_object_equivalent_to_mirror}, we have natural functions $u_1$,  $v_j$ for $1 \leq j \leq n$, and $\rho_i$ for $1 \leq i \leq m$, on $\cM_{m,n} $. 
The key technical result we shall prove is:
\begin{lem}\label{lem:lift_elements_to_SH}
  Each of the functions $u_1$, $v_j$ for $1 \leq j \leq n$, and $\rho^{\pm 1}_i$ for $0 \leq i \leq m$ lifts to $SH^0(X_{m,n})$ under the mirror map.
\end{lem}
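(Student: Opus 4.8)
The plan is to realise each of the listed functions as the closed--open image of an explicit symplectic cohomology class. Since the mirror map \eqref{eq:mirror_map-SH} is a unital ring homomorphism and the functions $u_1$, the $v_j$, and the $\rho_i^{\pm1}$ are precisely the restrictions to $\cM_{m,n}\cong X_{n,m}$ of the coordinates of the ambient $\bC^{n+1}\times(\bC^*)^m$ (Corollary~\ref{cor:moduli_object_equivalent_to_mirror}), these generate $\Gamma(\cM_{m,n})$; so producing the lifts is exactly what is required, and it simultaneously supplies the surjectivity half of Proposition~\ref{prop:mirror_map_iso}. Following the remark after that proposition, I would never compute $SH^*(X_{m,n})$ abstractly, but only evaluate the action of a candidate class on the compact Floer cohomology $HF^*((L_{m,n},b),L^+_{m,n})$ of Lemma~\ref{lem:computation-differential}, so that the curves in play continue to factor through $\bC$ exactly as in Propositions~\ref{prop:m0-disc-classification} and~\ref{prop:m1-disc-classification}, now carrying one additional interior positive puncture asymptotic to the orbit.

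A key structural point is that the coordinate functions cannot be localised to the $\bC$-factors, whose symplectic cohomology vanishes; rather, each must come from a factor with nontrivial $SH^0$, namely a pair-of-pants factor $\bC_{q_j}$ (with $SH^0(H_1)\cong\bk[x_0,x_1]/(x_0x_1)$) or the torus factor (with $SH^0((\bC^*)^m)\cong\bk[\rho_1^{\pm1},\dotsc,\rho_m^{\pm1}]$). The invertible generators $\rho_i^{\pm1}$, which record the holonomy of the local system on $T^m$, are the easiest: I would lift them by the minimal orbits winding once in each direction around the $i$-th $\bC^*$-factor, whose closed--open action on $HF^0((L_{m,n},b),L^+_{m,n})$ multiplies by the holonomy weight $\rho_i^{\pm1}$. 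For each $v_j$ (respectively $u_1$) I would take an orbit at the corresponding end of $X_{m,n}$ wrapping once around the puncture of the $j$-th (respectively first) pair-of-pants factor associated to the generator $t_j$ (respectively $s_1$) in the disc analysis, recalling that $t_j$ carries the weight $v_j$ and $s_j$ the weight $u_j$ in $\Def_{m,n}$.

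The closed--open image of such an orbit is then computed by the same bookkeeping as Proposition~\ref{prop:m1-disc-classification}: a single wrapping forces a Yellow factor in the adjacent $q$-coordinate, and the weighted count in the wrapping factor reproduces the intended variable, the $t_j$-orbit producing $v_j$ and the $s_1$-orbit producing $u_1$. Existence and regularity of these configurations follow from the argument of Lemma~\ref{lem:regularity_discs} applied with one interior positive puncture, and matching the resulting scalars against the explicit differential of Lemma~\ref{lem:computation-differential} identifies each image with the claimed coordinate. The last factor is the single case where the interaction with $(\bC^*)^m$ and the normal crossing divisor $\prod x_i=0$ intervenes, accounting for the appearance of $1+\sum_i\rho_i$ in \eqref{eq:equation-moduli-2}.

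The main work is twofold. First, one must verify that the chosen orbits are genuine degree-$0$ cocycles, a Conley--Zehnder computation carried out against the holomorphic volume form $\Omega_{m,n}$ of \eqref{eq:holomorphic_volume_form} in the spirit of the teardrop grading discussion preceding Lemma~\ref{lem:mu0 disc classification in 1d}; the subtle point is to select, at each end, the orbit landing in degree $0$ and to distinguish the ends of $X_{m,n}$ that are filled in by components of $D$ from its genuine infinity. Second, and harder, one must exclude contributions to the closed--open moduli spaces beyond the factorised ones: the unique-continuation and intersection-with-$D$ arguments of Proposition~\ref{prop:m1-disc-classification} should again preclude multiple covers and higher intersection numbers, but the hybrid configurations through $\prod x_i=0$ interacting with the torus factor require separate care to confirm that the image of the $u_1$-class is \emph{exactly} $u_1$ and not a quantity forced by the mirror relation. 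Once these counts are pinned down, the lifts follow.
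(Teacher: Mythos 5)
Your overall strategy---produce each coordinate as the closed--open image of an explicit class, evaluate only against the compact Floer groups of Lemma \ref{lem:computation-differential}, and reduce the relevant curve counts to the factorised analysis of Proposition \ref{prop:m1-disc-classification}---matches the paper's in spirit, and you correctly identify which factor each generator must come from, the role of the hybrid configurations through $\prod x_i = 0$, and the fact that the lemma supplies the surjectivity half of Proposition \ref{prop:mirror_map_iso}. The gap is in the construction of the symplectic cohomology classes themselves. You propose to take, at each end of $X_{m,n}$, a single Reeb orbit ``winding once'' around the relevant puncture and to certify it by a Conley--Zehnder computation against $\Omega_{m,n}$. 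But a Conley--Zehnder computation only places a generator in degree $0$; it does not show that the generator is closed under the symplectic cohomology differential, which requires counting cylinders and in general forces one to correct a naive orbit by further terms. Nothing in your argument addresses this, and ``selecting the orbit landing in degree $0$'' is not a substitute. Moreover, even granting a cocycle, computing its closed--open image requires controlling discs with an interior puncture asymptotic to a specified orbit, a moduli problem that your factorisation arguments (which are phrased in terms of intersection numbers with $D$ and projections to $\bC$) do not directly handle.

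The paper sidesteps both issues by a different device: for each function it builds an explicit partial compactification of $X_{m,n}$ by a normal crossing divisor (allowing $y_1$, respectively $y_{j+1}$, to vanish for $u_1$ and $v_j$ with $j<n$; adding a fibre of $q_n$ over infinity for $v_n$; and using the toric inclusions $\bC^2_{x_0,x_i}\subset \mathrm{Tot}_{\bC\bP^1}(\scrO(-1))$ in the two possible ways for $\rho_i^{\pm1}$). The general theory of Seidel and Ganatra--Pomerleano then furnishes, for each component of the divisor at infinity, a genuine closed degree-$0$ class in $SH^*(X_{m,n})$, with the key property that its image in the Floer cohomology of a compact Lagrangian is the count of discs in the compactification meeting the divisor with intersection number exactly $1$. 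This converts your asymptotic condition at an orbit into an algebraic intersection condition, for which the Pink and Yellow disc counts and the Maslov-$2$ Clifford-torus discs do the rest. To complete your route you would need either to import that machinery anyway or to prove closedness of your orbit classes and compute the closed--open map on them by hand, which is substantially harder than the lemma itself.
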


Before proving this (lifting) result, we explain how it implies the main result of this section:
\begin{proof}[Proof of Proposition \ref{prop:mirror_map_iso}]
  We use the fact that $ SH^0(X_{m,n})$ and $ \Gamma(\cM_{m,n})$ are abstractly isomorphic Noetherian rings.
  Lemma \ref{lem:lift_elements_to_SH} implies that the mirror map is surjective. A surjective self-map of a Noetherian ring cannot have a non-trivial kernel, as the inverse images of the kernel under iterates of the map would give rise to an infinite ascending chain of ideals. We conclude that the kernel is trivial, hence that it is an isomorphism.

\end{proof}

\subsubsection{Construction of the lifts}
\label{sec:construction-lifts}

The purpose of this section is to prove Lemma \ref{lem:lift_elements_to_SH}; the proof will make use of the fact that, in a partial compactification $\bar{X}$ of a symplectic manifold $X$ by normal crossing divisors, whose total space is appropriately convex at infinity (e.g. which intersects each curve positively), each component of the divisor at infinity defines an element of $SH^*(X)$. The idea goes back to work of Seidel \cite{Seidel2002}, and has been repeatedly used in the literature, with the most complete current account given in \cite{GanatraPomerleano2021}. We shall in particular use the fact, proved in loc. cit., that given a compact Lagrangian $L \subset X$ underlying an object of the Fukaya category, the image of the element of $SH^*(X)$ associated to a divisor in $\bar{X}$ in the Floer homology of $L$, is given by the count of discs in $\bar{X}$ with boundary on $L$, whose intersection number with the divisor is $1$.

At this stage, we complete the proof of the existence of lifts of the desired elements to symplectic cohomology, which will feature a choice of a different partial compactification for each case:
\begin{proof}[Proof of Lemma \ref{lem:lift_elements_to_SH}]
  For the element $u_1$, we consider the space $\bar{X}_{m,n}^{u_1}$ which is the closure of $X_{m,n}$ in
  \begin{equation}
    \bC_{y_1}  \times \prod_{i=2}^{n} \bC^*_{y_i} \times \bC^{m+1}_{x},
  \end{equation}
  i.e. were we allow the variable $y_1$ to vanish. The maps $q_j$ extend to $\bar{X}_{m,n}^{u_1}$, and the divisor  $\bar{X}_{m,n}^{u_1} \setminus X_{m,n}$ is a copy of $X_{m,n-1}$ (i.e. given by the equation $\prod x_i = 1+ \sum y_j$, where the coordinates $y_j$ range from $2$ to $n$). The map $q_1$ is given by $1 + \frac{y_1}{y_2}$, so that the divisor at infinity projects to $1$ (and is in fact its entire inverse image). The count of holomorphic discs whose intersection number with this divisor is $1$ corresponds to holomorphic discs which in the $q_1$ factor have a simple $1$, and are constant in all other factors. With boundary conditions $L_{m,n}$,
  the only discs which could contribute to the closed-open map 
  are the two Pink discs, one of which is shown in the bottom left of Figure \ref{fig:Seidel-Lagrangian-1-and-3-gon}.
 Requiring this disc to pass through a chosen marked point on $L_{m,n}$ representing the unit on cohomology, we find a single contribution to this count, which is regular, and the fact that it has exactly one corner at $s$ shows that this divisor lifts the function $u_1$ to symplectic cohomology.

  We define the partial compactification $\bar{X}_{m,n}^{v_j}$ for $1 \leq j \leq n-1$ analogously by allowing the coordinate $y_{j+1}$ to vanish. The map $q_j$ again extends to $\bar{X}_{m,n}^{v_j}$, but now with target $\bC \bP^1$; the divisor $\bar{X}_{m,n}^{v_j} \setminus X_{m,n}$ maps to infinity. All other maps $q_{j'}$ for $j \neq j'$ are still well-defined with target $\bC$.  The count of discs which meet the divisor at infinity once now corresponds to holomorphic discs which in the $q_j$ factor pass through infinity once, and are constant in all other factors; requiring them to pass through a marked point representing $1$, there is a unique such disc, which moreover has one corner at $t$. This divisor thus lifts the function $v_j$ to symplectic cohomology.

  Next, we define the partial compactification $ \bar{X}_{m,n}^{v_n}$  by adding the product $\left(\bC^*\right)^m \times H_{n-1}$ as a fibre of $q_n$ over infinity; note that this construction makes sense as an algebraic variety because the complement of the fibres $q_n^{-1}(0)$ and $q^{-1}(1)$ is isomorphic to
      \begin{equation}
        \{ \prod x_i = 1\} \times \{ \sum y_j = -1 \}  \times \bC \setminus \{0,1\}
      \end{equation}
      with isomorphism over $\lambda \in \bC \setminus \{0,1\}$ given by scaling the variable $x_0$ by $\lambda$ and $(y_1, \ldots, y_j)$ by $1 - \lambda$.  The remaining functions $q_i$ also extend, because they are preserved by the trivialisation. The discs whose intersection number with $\bar{X}_{m,n}^{v_n} \setminus X_{m,n}$ is $1$ are again constant in all factors except the $q_n$ factor, in which they pass through infinity once; those with boundary conditions on $L_{m,n}$, and passing through the marked point associated to the unit 
      have one corner at $t$, hence lift $v_n$ to symplectic cohomology.

    In order to lift the functions $\rho^{-1}_i$ for $1 \leq i \leq m$, we consider the toric inclusion
    \begin{equation}
        \bC^{2}_{x_0, x_i} \subset \mathrm{Tot}_{\bC \bP^1}(\scrO(-1))    
    \end{equation}
    into the total space of the (toric) line bundle over the projective space $\bC \bP^1_{x_i}$, which over $\bC^1_{x_i}$ identifies the $x_0$ factor with the fibre. The map $\prod x_i$ on $\bC^{m} $ extends as a torically equivariant function from the product of this total space with $\bC^{m-2}$  to $\bC$, so we can define a partial compactification
    \begin{equation}
         \bar{X}_{m,n}^{\rho_i^{-1}} \subset    \mathrm{Tot}_{\bC \bP^1}(\scrO(-1))   \times  \bC^{m-1} \times \left( \bC^* \right)^{n} 
       \end{equation}
       of $X_{m,n}$ to be the hypersurface where the function equals $1 + \sum y_i$. The divisor at infinity is a copy of $X_{m-1,n}$, corresponding to the inclusion
       \begin{equation}
         \bC  \subset     \mathrm{Tot}_{\bC \bP^1}(\scrO(-1))
            \end{equation}
            as the fibre over  $x_i = \infty$. The maps $q_j$ are still well-defined, as are the maps $x_{i'}$ for $i \neq i'$. The projection map to the base $\bC \bP^1_{x_i}$ of the line bundle extends the projection map to $\bC_{x_i}$; the divisor at infinity maps to infinity.   The Lagrangian $L_{m,n}$ projects to the standard circle in this factor, which bounds one disc whose winding number around the boundary is the opposite of the usual orientation of the circle factors of the Clifford torus (given as the boundary of the disc in $\bC$), so we obtain a lift of $\rho_i^{-1}$ to symplectic cohomology.

            Finally, we consider the inclusion
            \begin{equation}
        \bC^{2}_{x_0,x_i} \subset \mathrm{Tot}_{\bC \bP^1}(\scrO(-1))    
      \end{equation}
      so that $x_0$ now corresponds to the coordinate on the base, and $x_i$ to the coordinate on the fibre away from infinity.  We obtain a partial compactification of $X_{m,n}$ which we denote $ \bar{X}_{m,n}^{\rho_i}$ exactly as above. The function $x_i$ extends as a toric map
      \begin{equation}
        \mathrm{Tot}_{\bC \bP^1}(\scrO(-1)) \to \bC
      \end{equation}
      which vanishes on the divisor at infinity, so that this map also extends to $ \bar{X}_{m,n}^{\rho_i} $, taking the divisor at infinity to $0$. We can then identify the count of discs in $\bC$ passing once through the origin with the count of discs passing through this divisor, which now lifts the function $\rho_i$ to symplectic cohomology.
  \end{proof}

\section{Homological Mirror Symmetry}
\label{sec:homol-mirr-symm}

The goal of this section is to prove that Homological mirror symmetry holds for the pairs $X_{m,n}$ and $X_{n,m}$ in the sense of Theorem \ref{thm:HMS}. Our strategy starts by noting that the total space of $X_{m,n}$ can be obtained by completing a domain obtained by gluing symplectic domains that yield $H_{n-1} \times \bC^{m+1}$ and $ \left(\bC^{*}\right)^{n +m}$ under completion, along the common hypersurface $H_{n-1} \times \left(\bC^{*}\right)^m $. Mirror statements for the partially wrapped Fukaya categories of the pairs $\left( \bC^{m+1}, \left(\bC^{*}\right)^{m}\right) $ and $\left( \left(\bC^{*}\right)^{n}, H_{n-1}\right) $ are by now well-established, and our proof will combine these results with various structural properties of Fukaya categories and categories of coherent sheaves.

More precisely, the following is the key technical result that we establish, where we write $\Pi_{n-1}$ for the union of the coordinate hyperplanes $ \prod_{i=0}^{n-1} x_i = 0 $ in $\bC^n$:
\begin{prop}
  \label{prop:comparison_restriction_functors}
  There is a commutative diagram of functors
\begin{equation} \label{eq:side-by-side-squares-restriction}
  \begin{tikzcd}[column sep=tiny]
\cF(H_{n-1} \times \bC^{m+1},H_{n-1} \times  \left(\bC^{*}\right)^m ) \ar[d]  &    \cF(H_{n-1} \times \left(\bC^{*}\right)^m ) \ar[r] \ar[d] \ar[l] &  \cF(\left(\bC^{*}\right)^{n +m}, H_{n-1} \times \left(\bC^{*}\right)^m)  \ar[d] \\
\Coh(\Pi_{n-1} \times H_{m-1}) &   \Coh(\Pi_{n-1} \times \left(\bC^{*}\right)^m) \ar[r] \ar[l] & \Coh(\bC^{n} \times \left(\bC^{*}\right)^m)
  \end{tikzcd}
\end{equation}
in which the vertical maps are equivalences, the top horizontal maps are cup (Orlov) functors, and the bottom vertical maps are respectively pullback and pushforward functors.
\end{prop}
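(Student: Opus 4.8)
The plan is to reduce the whole diagram to a pair of compatibility statements about the building blocks, using that every category appearing is an external product of a category built from the ``$n$'' coordinates with one built from the ``$m$'' coordinates. All the Liouville manifolds and their stops are products in which the stop lives in a single factor, so the K\"unneth theorem for (partially) wrapped Fukaya categories gives
\begin{align*}
  \cF(H_{n-1} \times \bC^{m+1}, H_{n-1} \times (\bC^*)^m) &\simeq \cF(H_{n-1}) \otimes \cF(\bC^{m+1}, (\bC^*)^m), \\
  \cF(H_{n-1} \times (\bC^*)^m) &\simeq \cF(H_{n-1}) \otimes \cF((\bC^*)^m), \\
  \cF((\bC^*)^{n+m}, H_{n-1} \times (\bC^*)^m) &\simeq \cF((\bC^*)^n, H_{n-1}) \otimes \cF((\bC^*)^m),
\end{align*}
with the analogous external-product decompositions on the coherent side. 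The three vertical equivalences are then tensor products of four building-block equivalences: the pair-of-pants mirror symmetry $\cF(H_{n-1}) \simeq \Coh(\Pi_{n-1})$ of \cite{LekiliPolishchuk2020}, the self-mirror torus $\cF((\bC^*)^m) \simeq \Coh((\bC^*)^m)$, and the two partially wrapped statements $\cF((\bC^*)^n, H_{n-1}) \simeq \Coh(\bC^n)$ and $\cF(\bC^{m+1}, (\bC^*)^m) \simeq \Coh(H_{m-1})$ recalled before the proposition.

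With these decompositions in hand, I would observe that under K\"unneth each cup functor is an external product of a building-block cup functor with an identity, and likewise each bottom functor is an external product of a push/pull with an identity. Concretely, the right-hand cup functor is $(\text{cup}) \otimes \id$ in the $n$-factor and the left-hand cup functor is $\id \otimes (\text{cup})$ in the $m$-factor; the bottom-right functor is the pushforward $j_*$ along $\Pi_{n-1} \hookrightarrow \bC^n$ tensored with $\id$, and the bottom-left functor is the pullback (restriction) $i^*$ along $H_{m-1} \hookrightarrow (\bC^*)^m$ tensored with $\id$. Commutativity of the right square thus reduces to the claim that the cup functor $\cF(H_{n-1}) \to \cF((\bC^*)^n, H_{n-1})$ is intertwined with $j_* \colon \Coh(\Pi_{n-1}) \to \Coh(\bC^n)$, and commutativity of the left square reduces to the claim that the cup functor $\cF((\bC^*)^m) \to \cF(\bC^{m+1}, (\bC^*)^m)$ is intertwined with $i^* \colon \Coh((\bC^*)^m) \to \Coh(H_{m-1})$.

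To establish these two compatibilities I would use the functoriality built into the Orlov-functor formalism of \cite{GanatraPomerleano2021} together with a check on generators. For the pushforward, stop removal realizes
\begin{equation*}
  \cF(H_{n-1}) \to \cF((\bC^*)^n, H_{n-1}) \to \cF((\bC^*)^n)
\end{equation*}
as a Verdier sequence whose first arrow is the cup functor; on the coherent side this matches the localization $\Coh(\bC^n) \to \Coh((\bC^*)^n)$ along the open complement $(\bC^*)^n = \bC^n \setminus \Pi_{n-1}$, whose kernel is the essential image of $j_*$. Since the two outer equivalences are already known, matching the sequences identifies the cup functor with $j_*$, and to remove any residual shift or twist I would track the linking disk of the stop — whose image under the cup functor is explicit — and verify it maps to $\mathcal{O}_{\Pi_{n-1}}$. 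For the restriction the analogous sequence degenerates, since $\cF(\bC^{m+1}) = 0$, so the cup functor has generating image; this matches the fact that $\Coh(H_{m-1})$ is generated by $i^* \mathcal{O}_{(\bC^*)^m} = \mathcal{O}_{H_{m-1}}$, and I would again check directly that the linking disk is carried to $\mathcal{O}_{H_{m-1}}$, pinning the cup functor down as $i^*$.

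The main obstacle is precisely this last step: upgrading the \emph{known equivalences of categories} for the building blocks to equivalences that intertwine the cup/Orlov and the push/pull functors on the nose, up to coherent natural isomorphism. The abstract equivalences are available, but the diagram demands genuine compatibility of functors, which is strictly more. I expect the cleanest route is to read this compatibility off the Orlov-functor package of \cite{GanatraPomerleano2021}, in which the cup functor and its interaction with stop removal and with generation are structurally present, and then to reduce commutativity to a computation on the generators produced there — the linking disks and the cotangent-type fibers — whose images are explicit on both sides. A secondary technical point, which should be stated carefully rather than assumed, is that the K\"unneth equivalences used in the first two paragraphs are themselves compatible with cup functors; this follows from the monoidal behaviour of stop removal, so that the external-product reductions are legitimate.
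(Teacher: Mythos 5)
Your overall architecture---K\"unneth reduction of all six categories to external products, so that each square is governed by a single building-block compatibility (cup into $\cF((\bC^*)^n, H_{n-1})$ versus pushforward from $\Pi_{n-1}$, and cup into $\cF(\bC^{m+1},(\bC^*)^m)$ versus restriction to $H_{m-1}$)---matches the paper's, and you rightly flag that compatibility of K\"unneth with cup functors must itself be proved (this occupies the paper's appendix). The genuine gap is in how you pin down the two building-block compatibilities. Matching Verdier sequences, or observing that the image of the cup functor generates, identifies the cup functor with $j_*$ (resp.\ $i^*$) only up to an unspecified autoequivalence of $\Coh(\Pi_{n-1})$ (resp.\ $\Coh(H_{m-1})$), and checking that the linking disc is sent to the structure sheaf does not remove the ambiguity: for an affine target, an autoequivalence fixing $\mathcal{O}$ as an object may still act by an arbitrary ring automorphism on $\End(\mathcal{O})$ (for $\Pi_{n-1}$, e.g., a permutation or rescaling of the coordinates $x_i$). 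What is actually needed, and what your proposal never computes, is the map induced by the cup functor on the endomorphism algebras of the generators. That is the hard geometric content of the paper's argument: for the left square it is the computation in Lemma \ref{lem:Abouzaid-Auroux} that the spherical class $\tau_{\cup}$ equals $1 + \sum y_i$, obtained by decomposing a disc count according to the components of the fibre $\prod x_i = 0$ and showing each contributes exactly one disc; for the right square it is the identification of the cap functor on the generator with the ring map $\bZ[z_1,\dotsc,z_n] \to \bZ[z_1,\dotsc,z_n]/(\prod z_j)$, proved via the symmetric-product model of $H_{n-1}$ and a tropical degeneration. Neither can be ``read off'' the abstract spherical-functor formalism, which characterizes $\tau_{\cup}$ only implicitly.

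Two further points. First, even after the generators and their endomorphism algebras are matched, passing from ``the square commutes on the generator'' to ``the square of functors commutes'' requires an argument; the paper supplies one in Lemma \ref{lem:left-side} by writing an arbitrary object as a finite colimit of filtered colimits of the generator and checking that all four functors preserve these colimits (the horizontal ones because spherical functors are left adjoints, the vertical ones by full faithfulness). Your proposal is silent on this. Second, the paper handles the right square not by your localization-sequence route but by passing to adjoints, replacing the cup/pushforward square by a cap/restriction square via Lemma \ref{lem:describe_adjoint_cup_as_restriction}; this is what makes the generator computation there tractable, since the cap functor becomes a Viterbo restriction that can be computed on small Liouville subdomains around the punctures.
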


In order to understand the relevance of the above result to Theorem \ref{thm:HMS}, we need a basic geometric construction:
\begin{lem}
  The projection map
  \begin{equation}
    X_{n,m} \to \bC^{n} \times \left(\bC^{*}\right)^m
  \end{equation}
  associated to forgetting the variable $x_0$ induces an isomorphism between $X_{n,m}$ and the complement of the proper transform of the hypersurface $\Pi_{n-1} \times \left(\bC^{*}\right)^m$ in the blowup of $ \bC^{n} \times \left(\bC^{*}\right)^m$ along $\Pi_{n-1} \times H_{m-1}$.
\end{lem}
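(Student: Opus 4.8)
The plan is to exhibit the isomorphism by hand, recognising the defining equation of $X_{n,m}$ as literally the equation of a blowup. Write $P \equiv x_1 \cdots x_n$ for the product of the coordinates on the target $\bC^n$ (so that $\Pi_{n-1} = \{P = 0\}$) and $Q \equiv 1 + \sum_{j=1}^m y_j$ for the defining function of $H_{m-1} \subset (\bC^*)^m$ (so that $H_{m-1} = \{Q = 0\}$ and $\Pi_{n-1} \times H_{m-1} = \{P = Q = 0\}$). Under the projection forgetting $x_0$, the defining equation of $X_{n,m}$ reads $x_0 \cdot P = Q$. The (reduced, hence radical) ideal sheaf of the product $\Pi_{n-1} \times H_{m-1}$ is $(P,Q)$, so the blowup along this centre embeds in $(\bC^n \times (\bC^*)^m) \times \bP^1$ as the closure of the graph of $[P : Q]$; with homogeneous coordinates $[s : t]$ on $\bP^1$ it is cut out by the single equation $sQ = tP$. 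First I would record this graph-closure description, which is valid even though $\Pi_{n-1}$ is singular, since blowing up a two-generator ideal always produces the graph closure in $V \times \bP^1$.

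Next I would define the comparison map $\Phi \colon X_{n,m} \to \mathrm{Bl}$ by
\[
\Phi(x_0, x_1, \dotsc, x_n, y) = \big((x_1, \dotsc, x_n, y),\, [1 : x_0]\big).
\]
Setting $s = 1$, $t = x_0$, the blowup equation $sQ = tP$ becomes precisely $Q = x_0 P$, i.e.\ the defining equation of $X_{n,m}$, so $\Phi$ lands in the blowup. I then check that its image avoids the proper transform of $\Pi_{n-1} \times (\bC^*)^m = \{P = 0\}$: in the chart $t = 1$ this proper transform is $\{s = 0\}$ (from $sQ = P$ one sees $s = 0$ along the closure of $\{P = 0,\, Q \neq 0\}$), whereas $\Phi$ has first $\bP^1$-coordinate equal to $1$, so $s = 1/x_0 \neq 0$ wherever it meets this chart. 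Geometrically this matches the two exceptional phenomena: over $\{P = 0,\, Q \neq 0\}$ there is no point of $X_{n,m}$ (the equation $x_0 \cdot 0 = Q$ has no solution), which is exactly the point $[0:1]$ of each fibre that the proper transform removes; over $\{P = Q = 0\}$ the value $x_0$ is unconstrained and sweeps out $\bP^1 \setminus \{[0:1]\} = \{[1 : x_0]\}$, matching the blowup fibre with its proper-transform point deleted.

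Finally I would produce a regular inverse on the two standard charts, upgrading the set-theoretic bijection to an isomorphism of varieties. On the chart $s = 1$ (coordinates $(z,t)$ with $z = (x_1, \dotsc, x_n, y)$ and equation $Q = tP$) set $x_0 = t$; on the chart $t = 1$ with $s \neq 0$ (equation $sQ = P$) set $x_0 = 1/s$. In each case $x_0 P = Q$ holds, so the image lies in $X_{n,m}$, and the two formulas agree on the overlap since the chart transition sends the coordinate $t$ to $1/s$. These maps are manifestly regular, so they assemble into a morphism inverse to $\Phi$. The only genuinely delicate points are the bookkeeping identifying the blowup of the product subscheme with the graph closure of $[P:Q]$ (needed because $\Pi_{n-1}$ is not smooth, so one cannot appeal to a naive normal-bundle picture of the exceptional divisor) and the verification that it is the proper transform, rather than the exceptional divisor, that is deleted; both are settled by the explicit chart computation above. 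Since $X_{n,m}$ is smooth, the isomorphism also shows the relevant open subset of the blowup is smooth, so no separate regularity check on the blowup is required.
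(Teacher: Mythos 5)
Your proposal is correct and follows essentially the same route as the paper: realise the blowup of the ideal $(\prod_{i=1}^n x_i,\; 1+\sum y_j)$ as the hypersurface $sQ=tP$ in $(\bC^n\times(\bC^*)^m)\times\bP^1$, identify the proper transform of $\Pi_{n-1}\times(\bC^*)^m$ with one coordinate hyperplane of the $\bP^1$, and observe that on its complement the affine coordinate $x_0$ turns the blowup equation into the defining equation of $X_{n,m}$. You are merely more explicit than the paper about the graph-closure description of the blowup, the chart-by-chart inverse, and why the deleted divisor is the proper transform rather than the exceptional one.
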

\begin{proof}
  The codimension $2$ subvariety $\Pi_{n-1} \times H_{m-1}$ is the complete intersection associated to the two functions $f = \prod x_i $ and $g = 1 + \sum y_j$. The blowup is the subvariety of $\bC^{n} \times \left(\bC^{*}\right)^m \times \bP^1_{[X_0,Z]}$ given by
  \begin{equation}
    X_0 \cdot   \prod x_i = Z \cdot \left(   1 + \sum y_j \right).
  \end{equation}
  The proper transform of the subvariety $\Pi_{n-1} \times \left(\bC^{*}\right)^m $ is given in these equations by setting $Z=0$. The complement of this divisor has a standard coordinate $x_0$, with respect to which we obtain our equation for $X_{n,m}$ as a subvariety of $ \bC^{n+1} \times \left(\bC^{*}\right)^m$. 
\end{proof}

In the next part of this section, we explain how Proposition \ref{prop:comparison_restriction_functors} implies Theorem \ref{thm:HMS}. Afterwards, we explain the proof of the Proposition.

\subsection{Gluing coherent sheaves and Fukaya categories}
\label{sec:gluing-coher-sheav}

Let $X$ be a smooth variety and $Z \subset X$ a locally complete intersection subvariety of codimension $2$. Let $X_Z$ denote the blowup of $X$ along $Z$ (i.e. along the ideal of functions which vanish to first order at $Z$).  The key input for our computations is Orlov's formula \cite{Orlov1992} for the derived category of a blowup, which asserts that the derived category of the blowup has a semi-orthogonal decomposition
  \begin{equation}
    D^b \Coh X_{Z} \cong \langle D^b \Coh Z,  D^b \Coh X \rangle,
  \end{equation}
  where the embedding of $D^b \Coh Z $ in $D^b \Coh X_{Z}$ is given by composing the pull back to the exceptional divisor $E \subset X_Z$, with tensoring with the line bundle $\scrO(-1)$, and then with push forward to $X_Z$, while the embedding of $D^b \Coh X$ in $D^b \Coh X_{Z}$ is given by pullback along the projection. {Moreover, the defining bimodule is the pushforward map from $Z$ to $X$.}

  \begin{rem}
  Orlov's result was originally formulated only for smooth subvarieties, but is well-known among experts that it holds as well in the local complete intersection case (c.f. \cite[Theorem 3.4]{Kuznetsov2016}).
\end{rem}

  We now state a result which allows us to formulate a mirror to the blowup operation:
  \begin{figure}[h]
  \centering
  \begin{tikzpicture}

    \begin{scope}
      \node at (0,0) (F) {$F \times A_2$};
      \node at (-2,0) (X) {$E_1$};
      \node at (2,0) (Y) {$E_2$};
      \filldraw[black] (.75,0) circle (2pt);
      \filldraw[black] (-.75,0) circle (2pt);
      \filldraw[black] (0,-.75) circle (2pt);
      \draw[thick] (0,0) circle (.75);
      \draw[thick] (-2,0) circle (1.25);
       \draw[thick] (2,0) circle (1.25);
     \end{scope}
     
 \begin{scope}[shift={(6.5,0)}]
   \node at (0,0) (XFY) {$E_1 \cup_{F} \left(F \times A_2 \right) \cup_F E_2$};
   \draw[thick] (0,0) ellipse ( 2 and 1.25);
      \filldraw[black] (0,-1.25) circle (2pt);
     \end{scope}
  \end{tikzpicture}
  \label{fig:intermediate_gluing}
  \caption{The gluing procedure represented in the stopped formalism.}
\end{figure}
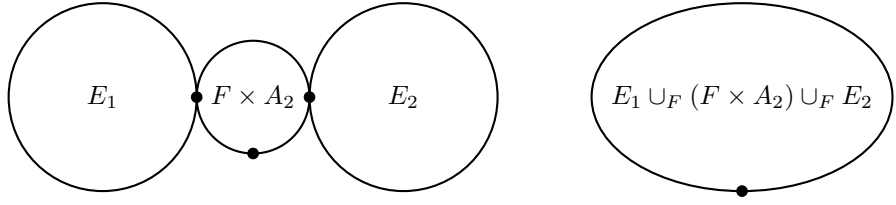

\begin{lem}[Corollary 3.11 of \cite{Sylvan2019}]
  \label{lem:sylvan-gluing}
  Let $E_1$ and $E_2$ be Liouville sectors with isomorphic Weinstein stop $F$. Let $E_1 \cup_{F} \left(F \times A_2\right) \cup_F E_2$ denote the Weinstein sector obtained by gluing $E_1$, $E_2$, and $F \times A_2$ along $F$. Then the Fukaya category of $E_1 \cup_{F} \left(F \times A_2\right) \cup_F E_2$ admits a semi-orthogonal decomposition
  \begin{equation} \label{eq:semi-orthogonal-A2-gluing}
    \langle \cF(E_2), \cF(E_1) \rangle,    
  \end{equation}
  with defining bimodule given by the graph of the composition
  \begin{equation}
 \cup_{E_1} \circ \cap_{E_2}^{L} \co    \cF(E_2) \to \cF(F) \to \cF(E_1)
  \end{equation}
  of the left cap functor for $E_2$, and the cup functor for $E_1$, which assigns to a pair $(L,K)$ of Lagrangians in $E_2$ and $E_1$ the Floer cohomology group
  \begin{equation}
    HF^*(\cup_{E_1} \circ \cap_{E_2}^{L} L,  K).    
  \end{equation}
\end{lem}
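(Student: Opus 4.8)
The plan is to treat this as an instance of a general gluing principle for partially wrapped Fukaya categories and reduce it to two established structural inputs: a sectorial descent theorem for Liouville sectors (in the form developed by Ganatra--Pardon--Shende) and a Künneth computation for the product region $F \times A_2$. The role of the intermediate piece $F \times A_2$ is precisely to convert a geometric gluing along $F$ into an \emph{algebraic} semi-orthogonal gluing, and the whole argument is organized around making this mechanism explicit.

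First I would apply sectorial descent to express $\cF\bigl(E_1 \cup_{F} (F \times A_2) \cup_F E_2\bigr)$ as the homotopy colimit of the diagram whose vertices are $\cF(E_1)$, $\cF(F \times A_2)$, and $\cF(E_2)$, and whose two overlaps are collar neighbourhoods of $F$, each contributing a copy of $\cF(F)$ together with its restriction functors. Concretely, the glued category is recovered as $\hocolim$ of the zig-zag $\cF(E_1) \leftarrow \cF(F) \to \cF(F\times A_2) \leftarrow \cF(F) \to \cF(E_2)$, where the maps into the middle term are the two end-restrictions along the $A_2$ direction. The point of this step is that all the subtlety is pushed into understanding the middle piece and the two restriction functors.

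Second, I would compute the middle term by the Künneth theorem for products of Liouville sectors, giving $\cF(F \times A_2) \htp \cF(F) \otimes \cF(A_2)$. The one-dimensional factor $A_2$ (the disc with two boundary stops) has wrapped category equivalent to $\Perf$ of the $A_2$ path algebra, i.e.\ representations of the directed quiver $\bullet \to \bullet$, which carries a built-in semi-orthogonal decomposition $\langle \pt, \pt\rangle$ by its two projective objects and, crucially, \emph{no} morphisms from the second vertex back to the first. Under this identification the two end-restriction functors to $\cF(F)$ become, respectively, the cup functor $\cup_{E_1}$ for $E_1$ and the left cap functor $\cap^{L}_{E_2}$ for $E_2$ (each composed with the appropriate inclusion/projection for $\cF(F)$). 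Assembling the colimit, the directedness of the $A_2$ factor forces the result to be a semi-orthogonal decomposition $\langle \cF(E_2), \cF(E_1)\rangle$ rather than a pushout: the mixed morphisms from the $E_1$-block to the $E_2$-block vanish, while those in the other direction are the Floer groups $HF^*\bigl(\cup_{E_1} \circ \cap^{L}_{E_2} L,\, K\bigr)$, identifying the defining bimodule with the graph of the composite $\cup_{E_1} \circ \cap^{L}_{E_2}$.

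The hard part will be the precise identification of the two end-restriction functors of $\cF(F\times A_2)$ with the cup and cap functors, and checking that the gluing bimodule is \emph{exactly} the stated graph rather than a shift or twist of it; this requires tracking the stopped/sectorial geometry near the two copies of $F$ together with the continuation and Reeb-pushoff maps that implement the restrictions at the $A_\infty$ level. The genuine semi-orthogonality (vanishing of the backward morphisms) reduces cleanly to the directedness of $A_2$, but reconciling that vanishing with the homotopy colimit so that it holds compatibly with the full $A_\infty$ structure is the technical heart of the argument.
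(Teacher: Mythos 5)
Your proposal is correct in outline but takes a genuinely different route from the paper. The paper does not re-derive the gluing: it quotes Corollary 3.11 of \cite{Sylvan2019} as a black box, which already supplies the semi-orthogonal decomposition $\langle \cF(E_2), \cF(E_1)\rangle$ with gluing bimodule $\Gamma^\dagger(\cup_{E_1}) \otimes_{\cF(F)} \Gamma(\cup_{E_2})$, and the entire content of the paper's proof is the single algebraic step of rewriting $\Gamma(\cup_{E_2}) \cong \Gamma^\dagger(\cap_{E_2}^{L})$ via the cup--cap adjunction, so that the tensor product of graph bimodules collapses to $\Gamma^\dagger(\cup_{E_1} \circ \cap_{E_2}^{L})$. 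What you propose instead is essentially a proof of the cited corollary itself: sectorial descent over the cover by $E_1$, $F\times A_2$, $E_2$, Künneth for the middle piece, and the directedness of the $A_2$ quiver to force semi-orthogonality. That is a legitimate and more self-contained strategy (it is close to how the result is established in the literature), and it makes the geometric mechanism visible; the cost is that you must actually carry out the $\hocolim$ computation, and two points deserve care. First, the arrows in the descent diagram out of the overlaps are the covariant inclusion functors $\cF(F) \simeq \cF(F\times T^*[0,1]) \to \cF(E_i)$, i.e.\ the cup functors, not restriction functors as you name them; the identification you want in your third step is exactly this. Second, the bar-construction computation of the mixed morphism spaces naturally outputs $\Gamma^\dagger(\cup_{E_1}) \otimes_{\cF(F)} \Gamma(\cup_{E_2})$, i.e.\ $\hom_{E_1}(\cup_{E_1}(-), K) \otimes_{\cF(F)} \hom_{E_2}(L, \cup_{E_2}(-))$, and passing from this to the stated graph $HF^*(\cup_{E_1}\circ\cap_{E_2}^{L} L, K)$ is not automatic from directedness: it requires precisely the adjunction $\hom_{E_2}(L, \cup_{E_2} K) \cong \hom_{F}(\cap_{E_2}^{L} L, K)$ that the paper invokes. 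You assert the final form of the bimodule without this step; you should make the adjunction explicit, since it is the one piece of the argument that does not follow from descent and Künneth alone.
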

\begin{proof}
  Corollary 3.11 of \cite{Sylvan2019} gives the desired semi-orthogonal decomposition with gluing bimodule
  \begin{equation}
    \Gamma^\dagger(\cup_{E_1}) \otimes_{\cF(F)} \Gamma(\cup_{E_2}),
  \end{equation}
  where the graph bimodules above are given by $\Gamma^\dagger(\cup_{E_1})(K_{E_1}, L_F) = \hom_{E_1}(\cup_{E_1} L_F, K_{E_1})$ and $\Gamma(\cup_{E_2})(K_F, L_{E_2}) = \hom_{E_2}(L_{E_2}, \cup_{E_2} K_F)$. Using the adjunction $\Gamma(\cup_{E_2}) \cong \Gamma^\dagger(\cap_{E_2}^L)$, we get the desired formula
  \begin{align}
    \label{eq:gluing_bimodule_A_side}
    \Gamma^\dagger(\cup_{E_1}) \otimes_{\cF(F)} \Gamma(\cup_{E_2})
      &\cong \Gamma^\dagger(\cup_{E_1}) \otimes_{\cF(F)} \Gamma^\dagger(\cap_{E_2}^L) \\
      &\cong \Gamma^\dagger(\cup_{E_1} \circ \cap_{E_2}^L).
  \end{align}
\end{proof}
In order to see the relevance of this result for mirror symmetry, we return to the context of Orlov's decomposition, and consider a smooth hypersurface $H \subset X$ containing $Z$. The following result follows from the adjunction between restriction and pushforward, and the functoriality of restriction, which allows us to express the pushforward map from $Z$ to $X$ as a composition of the pushforward from $H$ to $X$, and the pushforward from $Z$ to $H$ which is the adjoint of restriction from $H$ to $Z$:
\begin{cor}\label{cor:HMS_blowup_via_gluing}
  There is a commutative diagram
  \begin{equation}
  \begin{tikzcd}
    D^b \Coh(Z) \ar[d] & D^b \Coh(H) \ar[d] \ar[l] \ar[r] &  D^b \Coh(X) \ar[d] \\
    \cF(E_1)   & \cF(F)\ar[l] \ar[r] & \cF(E_2)
  \end{tikzcd}
\end{equation}
in which the vertical map are equivalences, the top maps are given by pushforward and pullback, and the bottom maps are Orlov cup functors induces an equivalence
\begin{equation}
     D^b \Coh(X_Z) \cong  \cF(E_1 \cup_{F} \left(A_2 \times F\right) \cup_F E_2).
\end{equation} \qed
\end{cor}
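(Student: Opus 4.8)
The plan is to exhibit both categories in the claimed equivalence as the gluing of their two building blocks along a bimodule, and then to match these data. On the algebraic side, Orlov's decomposition presents $D^b\Coh(X_Z)$ as $\langle D^b\Coh Z, D^b\Coh X\rangle$ with gluing bimodule the pushforward $i_*\colon Z\to X$; on the symplectic side, Lemma \ref{lem:sylvan-gluing} presents $\cF(E_1\cup_F(A_2\times F)\cup_F E_2)$ as $\langle\cF(E_2),\cF(E_1)\rangle$ with gluing bimodule $\Gamma^\dagger(\cup_{E_1}\circ\cap_{E_2}^L)$. I would first invoke the general principle that a semiorthogonal decomposition exhibits the ambient (pretriangulated $A_\infty$) category as the gluing of its two semiorthogonal pieces along the gluing bimodule, functorially in that data: equivalences between the pieces intertwining the gluing bimodules produce an equivalence of the glued categories. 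The corollary then reduces to two tasks --- matching the pieces, and matching the bimodules.

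Matching the pieces is immediate, since the hypothesized commutative diagram supplies equivalences $D^b\Coh Z\cong\cF(E_1)$ and $D^b\Coh X\cong\cF(E_2)$, which are exactly the summands of the two decompositions (the opposite orderings of the two decompositions reflect only that a gluing bimodule admits two adjoint functorial presentations, going in opposite directions).

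The substance is the identification of the two gluing bimodules, both of which are built by factoring through the middle category under $D^b\Coh H\cong\cF(F)$. On the algebraic side the preceding discussion factors $i_* = j_*\circ k_*$, with $j\colon H\hookrightarrow X$ and $k\colon Z\hookrightarrow H$ and with the pushforward $k_*$ exhibited as the adjoint of the restriction $k^*$; equivalently, the same bimodule is governed by the restriction $i^* = k^*\circ j^*$. On the symplectic side the bimodule is governed by $\cup_{E_1}\circ\cap_{E_2}^L$, with $\cap_{E_2}^L$ the adjoint of the cup functor $\cup_{E_2}$. The commutativity of the two squares identifies $k^*$ with $\cup_{E_1}$ and $j_*$ with $\cup_{E_2}$; since an equivalence carries the adjoint of a functor to the adjoint of its image, $j^*$ is carried to $\cap_{E_2}^L$, whence $i^* = k^*\circ j^*$ is carried to $\cup_{E_1}\circ\cap_{E_2}^L$. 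This identifies the gluing bimodules, and functoriality of the gluing construction delivers $D^b\Coh(X_Z)\cong\cF(E_1\cup_F(A_2\times F)\cup_F E_2)$.

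The main obstacle I anticipate is precisely this bimodule bookkeeping: one must verify that the adjunction realizing Orlov's gluing bimodule as a pushforward factored through $H$ corresponds, under the given equivalences, to the adjunction realizing the left cap $\cap_{E_2}^L$ as the adjoint of the cup functor, with the handedness of the adjunctions, the ordering of the two semiorthogonal summands, and any normal-bundle twist and shift in Orlov's embedding all consistently tracked on the two sides. Once the two gluing bimodules are seen to coincide as bimodules over the identified pair of categories, the functoriality of semiorthogonal gluing yields the stated equivalence.
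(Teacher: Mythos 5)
Your proposal is correct and follows essentially the same route as the paper: the corollary is stated with no separate proof precisely because it is meant to follow from matching Orlov's semi-orthogonal decomposition against the one in Lemma \ref{lem:sylvan-gluing}, with the gluing bimodules identified via the factorisation $i_* = j_* \circ k_*$ and the adjunctions $k^* \dashv k_*$, $j^* \dashv j_*$ mirroring $\cap_{E_2}^{L} \dashv \cup_{E_2}$ --- exactly the bookkeeping you carry out. Your only cosmetic deviation is presenting the common bimodule as the adjoint graph of $i^* = k^* \circ j^*$ rather than the graph of $j_* \circ k_*$; these are the same bimodule, matching respectively the statement and the proof of Lemma \ref{lem:sylvan-gluing}.
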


In the example at hand, we are interested not in the blowup, but in the complement  $X_Z \setminus H$ of the proper transform of $H$ in $X_Z$.  The derived category of this complement is the quotient category
  \begin{equation}
    D^b \Coh ( X_Z \setminus H) \cong  D^b \Coh X_{Z}/ D^b \Coh H.
  \end{equation}
  In order to express the functor $D^b \Coh H \to D^b \Coh X_{Z} $ associated to the proper transform in terms of Orlov's semi-orthogonal decomposition, note that the functor associated to pushing forward from $H$ to $X$ then pulling back to $X_Z$ is represented by the composite $D^b \Coh H \to D^b \Coh X \to D^b \Coh X_{Z} $ corresponding to the second piece of the semi-orthogonal decomposition. This functor geometrically corresponds to the pullback from $H$ to its total transform. Since the total transform decomposes as the union of the exceptional divisor and the proper transform, we are led to consider the composite of the restriction functor $ D^b \Coh H \to D^b \Coh Z  $, with pullback from $Z$ to $E$, tensoring with the natural line bundle $\scrL$ on $E$, then pushing forward to $X_Z$, which corresponds to the first piece of Orlov's decomposition. The line bundle $\scrL$ arises from the projectivisation of the normal bundle of $Z$ in $X$, so the section defining $TH|_{Z}$ yields a natural transformation in the diagram of functors
  \begin{equation}
    \begin{tikzcd}
      D^b \Coh H \ar[r] \ar[d] & D^b \Coh Z \ar[d] \ar[dl,Rightarrow] \\
     D^b \Coh X \ar[r] &  D^b \Coh X_{Z}.
    \end{tikzcd}
  \end{equation}
{The cone of this natural transformation represents the pushforward map on the proper transform of $H$.} This can be expressed in terms of the semi-orthogonal decomposition as follows: { the adjunction between pullback and pushforward yields a natural transformation from the composition of restricting from $H$ to $Z$ and pushing forward from $Z$ to $H$, to the identity on $D^b \Coh H$. Composing with the pushforward from $H$ to $X$ then yields the desired natural transformation between the two functors from $D^b \Coh H$ to $\langle D^b \Coh Z,D^b \Coh X \rangle$  described above.} We now formulate the mirror Floer theoretic construction:

\begin{lem}[Proposition 3.12 of \cite{Sylvan2019}]
  \label{lem:GPS-gluing-from-Sylvan-gluing}
  With respect to the semi-orthogonal decomposition in Equation \eqref{eq:semi-orthogonal-A2-gluing}, the stop-removal functor
  \begin{equation}
       \cF(  E_1 \cup_{F} \left(A_2 \times F\right) \cup_F E_2) \to \cF( E_1 \cup_{F}E_2) 
     \end{equation}
     is obtained by localisation at the natural transformation $\cup_{E_1} \circ \eta_{E_2}$ (where $\eta_{E_2}  \co \cap_{E_2}^{L} \circ \cup_{E_2} \Rightarrow 1_{\cF(F)}$ is the co-unit of the cap-cup adjunction on $E_2$) between the two functors
     \begin{equation}
       \begin{tikzcd}
         \cF(F) \ar[r] \ar[d] &  \cF({E_1}) \ar[d] \\
         \cF(E_2) \ar[r] & \langle \cF(E_2), \cF(E_1) \rangle.    
       \end{tikzcd}
     \end{equation}
\end{lem}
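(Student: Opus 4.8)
The plan is to deduce this from Proposition 3.12 of \cite{Sylvan2019}, whose content is precisely that filling in the stops introduced by the $A_2 \times F$ collar is a localization; the remaining work is to match the localizing datum with the natural transformation $\cup_{E_1} \circ \eta_{E_2}$ in the language of the semi-orthogonal decomposition of Lemma \ref{lem:sylvan-gluing}. Throughout I write $\iota_1 \co \cF(E_1) \to \langle \cF(E_2), \cF(E_1)\rangle$ and $\iota_2 \co \cF(E_2) \to \langle \cF(E_2), \cF(E_1)\rangle$ for the two semi-orthogonal inclusions.

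First I would invoke the stop-removal theorem of Ganatra--Pardon--Shende: the functor $\cF(E_1 \cup_F (A_2 \times F) \cup_F E_2) \to \cF(E_1 \cup_F E_2)$ induced by forgetting the collar stops is a quotient by the subcategory generated by the linking disks of those stops, equivalently a localization at the continuation morphisms attached to them. Geometrically, the $A_2$ factor contributes, for each object $X$ of $\cF(F)$, a single linking disk, and under the identifications of Lemma \ref{lem:sylvan-gluing} this disk is the cone of a morphism from $\iota_2 \cup_{E_2} X$ to $\iota_1 \cup_{E_1} X$ inside $\langle \cF(E_2), \cF(E_1)\rangle$; these are exactly the two functors $\cF(F) \to \langle \cF(E_2), \cF(E_1)\rangle$ appearing along the two sides of the displayed square.

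The heart of the argument is then to identify this morphism. Using the adjunction $\Gamma(\cup_{E_2}) \cong \Gamma^\dagger(\cap_{E_2}^L)$ already exploited in the proof of Lemma \ref{lem:sylvan-gluing}, the gluing bimodule computes $\Hom_{\langle \cF(E_2), \cF(E_1)\rangle}(\iota_2 \cup_{E_2} X, \iota_1 \cup_{E_1} X) \cong \Hom_{E_1}(\cup_{E_1} \cap_{E_2}^L \cup_{E_2} X, \cup_{E_1} X)$, and I would show that the continuation morphism is precisely the image under $\cup_{E_1}$ of the counit $\eta_{E_2}(X) \co \cap_{E_2}^L \cup_{E_2} X \to X$. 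Naturality in $X$ then packages these into the single natural transformation $\cup_{E_1} \circ \eta_{E_2}$, so that localizing at it recovers the direct gluing bimodule for $\cF(E_1 \cup_F E_2)$.

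The main obstacle is the compatibility check at the level of the continuation data: one must verify that the abstract continuation morphism produced by stop removal coincides, under Sylvan's bimodule identifications, with the geometrically defined counit of the cup--cap adjunction, and not merely up to an unresolved scalar or higher homotopy. This is where the concrete model for the linking disk in $A_2 \times F$, together with the functoriality statements of \cite{Sylvan2019}, must be used carefully; once this identification is in place, the lemma follows immediately from the Ganatra--Pardon--Shende localization statement.
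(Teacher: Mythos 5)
You have correctly located the content of the lemma, but your proposal does not actually prove it: the step you defer as the ``main obstacle'' --- verifying that the continuation morphism attached to the collar stop coincides, under the bimodule identifications, with $\cup_{E_1}((\eta_{E_2})_{L_F})$ rather than with some other morphism in the same graded piece --- is precisely the assertion being made, so the argument as written is circular at its crucial point. Moreover, the route you choose makes this harder than it needs to be. By passing through the Ganatra--Pardon--Shende description of stop removal as a quotient by linking disks, you force yourself to produce a geometric model for the linking disk of the $A_2$ collar stop inside $\langle \cF(E_2), \cF(E_1)\rangle$ and to match a geometrically defined continuation element against the counit; this is genuine Floer-theoretic work that you have not carried out.

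The paper's proof avoids this entirely: Proposition 3.12 of Sylvan's paper already states that stop removal is localisation at a natural transformation given \emph{explicitly}, in the presentation $\Gamma^\dagger(\cup_{E_1}) \otimes_{\cF(F)} \Gamma(\cup_{E_2})$ of the gluing bimodule, by the element $T_{L_F} = \id_{\cup_{E_1}L_F} \otimes \id_{\cup_{E_2}L_F}$. All that remains is pure algebra: since localisation depends only on the cohomology-level natural transformation, one tracks $T_{L_F}$ through the quasi-isomorphisms of Equation \eqref{eq:gluing_bimodule_A_side}. The adjunction $\Gamma(\cup_{E_2}) \cong \Gamma^\dagger(\cap_{E_2}^L)$ sends $\id_{\cup_{E_2}L_F}$ to the counit $(\eta_{E_2})_{L_F}$ --- a formal property of adjunctions, not a disc count --- and the composition quasi-isomorphism then sends $\id_{\cup_{E_1}L_F}\otimes(\eta_{E_2})_{L_F}$ to $\mu^2(\id_{\cup_{E_1}L_F}, \cup_{E_1}((\eta_{E_2})_{L_F})) = \cup_{E_1}((\eta_{E_2})_{L_F})$. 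If you want to salvage your approach, you would need to either prove your identification of the continuation element directly (essentially re-deriving Sylvan's Proposition 3.12) or, more efficiently, start from the algebraic form of that proposition as the paper does.
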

\begin{proof}
  In the presentation $\Gamma^\dagger(\cup_{E_1}) \otimes_{\cF(F)} \Gamma(\cup_{E_2})$ of the gluing bimodule, Proposition 3.12 of \cite{Sylvan2019} asserts that stop removal is given by inverting a natural transformation which, at the level of objects, is given by $T_{L_F} = \id_{\cup_{E_1}L_F} \otimes \id_{\cup_{E_2}L_F}$. Since localisation only depends on the cohomology-level natural transformation, it is enough to show that the quasi-isomorphisms \eqref{eq:gluing_bimodule_A_side} sends $T_{L_F}$ to $\cup_{E_1}((\eta_{E_2})_{L_F})$. For this, we compute
  \begin{multline}
    T_{L_F} \mapsto \id_{\cup_{E_1} L_F} \otimes (\eta_{E_2})_{L_F} \\
    \mapsto \mu^2(\id_{\cup_{E_1} L_F},\cup_{E_1}
    \circ (\eta_{E_2})_{L_F}) = \cup_{E_1} \circ (\eta_{E_2})_{L_F}.
  \end{multline}
\end{proof}

At this stage, we are ready to deduce our homological mirror statement from the main result of this section
\begin{proof}[Proof of Theorem \ref{thm:HMS}]
Proposition \ref{prop:comparison_restriction_functors} asserts that the assumptions of Corollary \ref{cor:HMS_blowup_via_gluing} hold, yielding a comparison between the derived category of the blowup of $ \bC^{n} \times \left(\bC^{*}\right)^m$ along $\Pi_{n-1} \times H_{m-1}$ with the Fukaya category of $X_{m,n}$ stopped at $ H_{n-1} \times  \left(\bC^{*}\right)^m $. The description of the derived category of $X_Z \setminus H$ and of the Fukaya category of $X_{m,n}$ as localisations then yields the desired result.
\end{proof}

\begin{rem}
  A more efficient way to summarise the results of this section is to consider the diagram
    \begin{equation}
    \begin{tikzcd}
      D^b \Coh H \ar[r] \ar[d] & D^b \Coh Z \ar[d] \\
      D^b \Coh X \ar[r] & D^b \Coh ( X_Z \setminus H).
    \end{tikzcd}
  \end{equation}
  of categories. This can be shown to be a pushout diagram, and there are analogous results for the Fukaya categories of Liouville sections glued along a common stop (c.f.  Proposition 9.31 of \cite{GanatraPardonShende2019}). This formulation is used in \cite{GammageLe2021}.
\end{rem}

\subsection{A brief review of mirror symmetry for the three base cases}
\label{sec:brief-review-mirror}

We begin the proof of Proposition \ref{prop:comparison_restriction_functors}  by specifying our description of the three vertical maps in Diagram \eqref{eq:side-by-side-squares-restriction}.

One simplifying point is that the three varieties being considered on the coherent sheaf side are all affine, so that their derived categories of coherent sheaves are modelled by the category of complexes of free modules over the underlying ring of functions, with finitely generated total cohomology. We shall thus construct the desired functors by identifying Lagrangians in each of the three Liouville sections being considered, which are mirror to the structure sheaf. The discussion is further simplified by the intrinsic formality of algebras which are supported in degree $0$, i.e. they are determined up to $A_\infty$ equivalence by the cohomological product.

We begin by considering the functor
\begin{equation}
      \cF(H_{n-1} \times \left(\bC^{*}\right)^m ) \to  \Coh(\Pi_{n-1} \times \left(\bC^{*}\right)^m).
\end{equation}

On the symplectic side, we introduce the positive real Lagrangian
\begin{equation}
 L^+_{n-1}   \times \bR_+^m \subset H_{n-1} \times \left(\bC^{*}\right)^m,  
\end{equation}
i.e. the product of the set $ L^+_{n-1}$ of solutions to $\sum y_j = 1$ for $y_j \in \bR_+$ with the set $\bR_+^m $ of solutions to $\prod x_i = 1 $, again with $x_i \in \bR_+$.
\begin{lem} \label{lem:HMS-fibre}
  The self-Floer cohomology of $ L^+_{n-1}   \times \bR_+^m$ is isomorphic to the ring  of functions on $ \Pi_{n-1} \times \left(\bC^{*}\right)^m$, and the induced Yoneda functor
  \begin{equation}
    \cF(    H_{n-1} \times \left(\bC^{*}\right)^m) \to \mod_{\bk[x_j, y_i^{\pm}]/ \prod x_i }  
  \end{equation}
  is a fully faithful embedding onto the subcategory corresponding to the derived category of coherent sheaves.
\end{lem}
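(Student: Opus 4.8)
The plan is to deduce both assertions from the corresponding mirror statements for the two factors $H_{n-1}$ and $(\bC^*)^m$, exploiting the product structure. On the pair-of-pants factor, homological mirror symmetry for higher pairs of pants (\cite{LekiliPolishchuk2020}, and \cite{AbouzaidAurouxEfimovKatzarkovOrlov2013} in the surface case) supplies an equivalence $\cF(H_{n-1}) \cong D^{b}\Coh(\Pi_{n-1})$ sending the positive real Lagrangian $L^+_{n-1}$ to the structure sheaf $\mathcal{O}_{\Pi_{n-1}}$; in particular its self-Floer cohomology is the ring $\bk[x_0, \ldots, x_{n-1}]/(\prod x_i)$ of functions on $\Pi_{n-1}$, concentrated in degree $0$. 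On the torus factor $(\bC^*)^m \cong T^*T^m$, the Lagrangian $\bR_+^m$ is a cotangent fibre, with self-Floer cohomology $C_{-*}(\Omega T^m) \cong \bk[y_1^{\pm}, \ldots, y_m^{\pm}]$ also concentrated in degree $0$, and it generates the wrapped category, giving the standard equivalence $\cF((\bC^*)^m) \cong \Perf(\bk[y^{\pm}]) = D^{b}\Coh((\bC^*)^m)$ under which $\bR_+^m$ is mirror to $\mathcal{O}$.

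First I would settle the self-Floer computation. Since $L^+_{n-1} \times \bR_+^m$ is a product Lagrangian in a product of Liouville manifolds, the K\"unneth formula for Floer cohomology identifies its self-Floer cohomology with the tensor product of the two factor computations as algebras:
\[ HF^{*}(L^+_{n-1} \times \bR_+^m) \cong \bk[x_0, \ldots, x_{n-1}]/(\prod x_i) \;\otimes\; \bk[y_1^{\pm}, \ldots, y_m^{\pm}] = \bk[x_j, y_i^{\pm}]/(\prod x_i), \]
which is exactly the ring of functions on $\Pi_{n-1} \times (\bC^*)^m$. Because both factors, and hence the product, live in degree $0$, the higher products $\mu^{k}$ vanish for $k \geq 3$, so the algebra is intrinsically formal and agrees with this commutative ring on the nose.

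For the functor, I would invoke the K\"unneth theorem for wrapped Fukaya categories (c.f. \cite{GanatraPardonShende2019}) together with the external product for coherent sheaves to promote the two factor-wise equivalences to an equivalence $\cF(H_{n-1} \times (\bC^*)^m) \cong D^{b}\Coh(\Pi_{n-1} \times (\bC^*)^m)$ under which $L^+_{n-1} \times \bR_+^m$ corresponds to $\mathcal{O}$. Under this identification the Yoneda functor of the statement becomes $R\Hom(\mathcal{O}, -) = R\Gamma$, the global sections functor on $D^{b}\Coh$ of the affine variety $\Pi_{n-1} \times (\bC^*)^m$. Affineness makes $R\Gamma$ exact and identifies $D^{b}\Coh$ with the full subcategory of $\mod_{\bk[x_j, y_i^{\pm}]/\prod x_i}$ of complexes with bounded finitely generated cohomology, which is precisely the claimed fully faithful embedding onto the subcategory of coherent sheaves. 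It is worth emphasising that full faithfulness here comes from affineness, \emph{not} from $L^+_{n-1} \times \bR_+^m$ generating the Fukaya category: once $\Pi_{n-1}$ is singular, $\mathcal{O}_{\Pi_{n-1}}$ does not generate $D^{b}\Coh(\Pi_{n-1})$, yet $R\Gamma$ remains fully faithful on the whole of $D^{b}\Coh$.

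The hard part will be the categorical K\"unneth step: one must match the external tensor product of the two factor Lagrangians, and of the two mirror equivalences, with the product decomposition of the wrapped Fukaya category, taking care of the idempotent-completion issues inherent in such product formulas, and checking that the resulting product equivalence carries $L^+_{n-1} \times \bR_+^m$ to $\mathcal{O}$ rather than merely to some object with isomorphic endomorphism algebra. Given the factor-wise statements, the single-Lagrangian Floer computation and the final recognition of the Yoneda functor as $R\Gamma$ are then routine.
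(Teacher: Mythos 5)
Your overall strategy --- K\"unneth reduction to the two factors, generation of $\cF((\bC^*)^m)$ by the cotangent fibre $\bR_+^m$, the Lekili--Polishchuk equivalence for the pair-of-pants factor, and the observation that full faithfulness on that factor comes from the embedding $D^b\Coh \hookrightarrow \mod_R$ over an affine base rather than from generation --- is exactly the paper's. But there is a genuine gap at the point where you write that the literature ``supplies an equivalence $\cF(H_{n-1}) \cong D^{b}\Coh(\Pi_{n-1})$ sending the positive real Lagrangian $L^+_{n-1}$ to the structure sheaf.'' That is not what \cite{LekiliPolishchuk2020} proves: their equivalence sends a specific Lagrangian disc $L_{LP}$, constructed as a product of real arcs in the symmetric-product model $\Sym^{n-1}(\bC\setminus\{p_1,\dotsc,p_n\})$, to the rank-one free module, and it holds for a particular choice of grading structure on $H_{n-1}$.

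Identifying $L^+_{n-1}$ with $L_{LP}$ is the substantive content of the paper's proof and cannot be skipped: one must choose the punctures $p_i$ so that a Vandermonde-type determinant is nonzero, deform the resulting hyperplane arrangement to the union of the coordinate hyperplanes with $\{1+\sum z_k=0\}$ by convex interpolation through nonsingular matrices, and track the relevant connected component of the real locus through the induced isotopy to see that it lands on $\bR_+^{n-1}$. Separately, one must check that the grading induced by the holomorphic volume form of Equation \eqref{eq:holomorphic_volume_form} agrees with the Lekili--Polishchuk grading; the paper does this by comparing pole orders of the volume form along the boundary divisors of a compactification and arguing that only one grading keeps $HF^*(L^+_{n-1})$ in bounded (hence zero) degree. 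You do flag the danger of the equivalence carrying your Lagrangian ``merely to some object with isomorphic endomorphism algebra,'' but you locate that difficulty in the categorical K\"unneth step, whereas it actually lives inside the single factor $H_{n-1}$. Without the isotopy and the grading comparison, your assertion that $HF^*(L^+_{n-1})$ is the ring of functions on $\Pi_{n-1}$ concentrated in degree $0$ --- and hence the intrinsic-formality step that follows --- is unsupported.
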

\begin{proof}
  By the K\"unneth formula \cite{GanatraPardonShende2019}, we have a commutative diagram
  \begin{equation}
    \begin{tikzcd}
      \cF(H_{n-1} \times (\bC^*)^m) \ar[r, "\cong"] \ar[d, "\cY_F"]
        & \cF(H_{n-1}) \otimes \cF((\bC^*)^m) \ar[d, "\mathcal Y_\mathrm{prod}"] \\
      \mod_{CF^*(L^{\bR}_{n-1} \times \bR_+^m)} \ar[r, "\cong"]
        & \mod_{CF^*(L_{n-1}^\bR)} \otimes \mod_{CF^*(\bR_+^m)} ,\\
    \end{tikzcd}
  \end{equation}
 in which both vertical arrows are given by testing against the object $L_{n-1}^\bR \times \bR_+^m$. It thus suffices to show that the functor $\cY_\mathrm{prod}$ is fully faithful, which amounts to showing that its two components are individually fully faithful. This is automatic for the second component, since $\bR_+^m$ generates $\cF( (\bC^*)^m )$, so it is enough to see that the first component
  \begin{equation}
    \cF(H_{n-1}) \to \mod_{CF^*(L_{n-1}^\bR)}
  \end{equation}
  is fully faithful.
  
  We shall appeal to \cite{LekiliPolishchuk2020}, in which it is proved that for a particular graded structure on $H_{n-1}$, there is an equivalence
  \begin{equation}
    \cF(H_{n-1}) \cong \Coh(\bk[x_1,\dotsc,x_n]/(x_1\dotsm x_n))
  \end{equation}
  sending a particular Lagrangian disc $L_{LP}$ to the rank $1$ free module. Using the fully faithful embedding $\Coh \hookrightarrow \mod$, this reduces our task to showing that $L_{n-1}^\bR$ is isotopic to $L_{LP}$ and that our grading choice agrees with theirs.

  We'll begin by recalling the construction of $L_{LP}$, which takes place in $H_{n-1}^{sym} = \Sym^{n-1}\Sigma$, where $\Sigma = \bC \setminus \{ p_1, \dotsc, p_n \}$. Varying the position of the points $p_i$ results in a deformation of Stein manifolds, so to construct a Liouville identification $H_{n-1}^{sym} \sim H_{n-1}$ we may use whichever $p_i$ we like. In this case, a convenient choice is to take
  \begin{equation}
    p_1 = 0 \ll - p_2 \ll \cdots \ll - p_{n-1} \ll 1 ,
  \end{equation}
  and $p_n=1$, where all $p_i$ are (negative) real and ``$\ll$'' is taken to make the diagonal approximation
  \begin{equation}
    \label{eq:sym_prod_determinant}
  \begin{vmatrix}
    1 & 0 & \cdots & 0 \\
    1 & p_2 & \cdots & p_2^{n-1} \\
    1 & p_3 & \cdots & p_3^{n-1} \\
    \vdots & \vdots & \ddots & \vdots \\
    1 & 1 & \cdots & 1
  \end{vmatrix}
  \approx p_2 p_3^2 \dotsm p_{n-1}^{n-2} \ne0
  \end{equation}
  hold. Note that, under the identification $ \Sym^{n-1}\bC \cong \bC^{n-1} $ given by
\begin{equation}
  \label{eq:sym_prod_identification}
  \begin{aligned}
    (a_1, \dotsc, a_{n-1}) &\mapsto \prod (x-a_i) \\
                           &= \sum z_{k+1} x^k \mapsto (z_1, \dotsc, z_{n-1}),
  \end{aligned}
\end{equation}
  $H_{n-1}^{sym}$ is sent to the complement of the affine hyperplanes
  \begin{equation}
    \label{eq:sym_prod_hyperplanes}
    \left\{p^{n-1}_{i} + \sum_{k=0}^{n-2} p_i^k z_{k+1} = 0\right\}_i,
  \end{equation}
  and the nonvanishing of the determinant in Equation \eqref{eq:sym_prod_determinant} asserts precisely that these hyperplanes are in general position.

  The Lagrangian $L_{LP} \subset H_{n-1}^{sym}$ is the product of the real arcs
  \begin{equation}
    (-\infty, p_{n-1}) \times (p_{n-1}, p_{n-2}) \times \dotsb \times (p_2, 0).
  \end{equation}
  Under the identification \eqref{eq:sym_prod_identification}, $L_{LP}$ lands in the positive real locus $\bR_+^{n-1}$.

  To identify $H_{n-1}^{sym}$ with our preferred presentation 
  \begin{equation}
    H_{n-1} = \bC^{n-1} \setminus \left\{1 + \sum z_k = 0\right\},
  \end{equation}
  we will deform the hyperplane loci \eqref{eq:sym_prod_hyperplanes} to the loci comprising the coordinate hyperplanes together with $\{1 + \sum z_k = 0\}$, or equivalently deform the matrix
  \begin{equation}
    M_0 =
    \begin{pmatrix}
      1 & 0 & \cdots & 0 & 0 \\
      1 & p_2 & \cdots & p_2^{n-2} & p_2^{n-1} \\
      \vdots & \vdots & \ddots & \vdots & \vdots\\
      1 & p_{n-1} & \cdots & p_{n-1}^{n-2} & p_{n-1}^{n-1} \\
      1 & 1 & \cdots & 1 & 1
    \end{pmatrix}
  \end{equation}
  to the matrix
  \begin{equation}
    M_1 =
    \begin{pmatrix}
      1 & 0 & \cdots & 0 & 0 \\
      0 & -1 & \cdots & 0 & 0 \\
      \vdots & \vdots & \ddots & \vdots & \vdots \\
      0 & 0 & \cdots & (-1)^{n-2} & 0 \\
      1 & 1 & \cdots & 1 & 1
    \end{pmatrix}
  \end{equation}
  through nonsingular matrices $M_t$. This we can do by convex interpolation, since we have chosen the $p_i$ to satisfy \eqref{eq:sym_prod_determinant}.

  Let us examine the effect of this deformation on the Lagrangian $L_{LP}$. Because all the coefficients are real, $L_{LP}$ is a connected component of $H_{n-1}^{sym} \cap \bR^{n-1}$. As we follow the deformation, we obtain a family of Lagrangians given by the corresponding connected component of the real locus. This family is induced by an isotopy because the hyperplane configuration that we are considering is generic.  To see that the image of $L_{LP}$ under this isotopy is $L_{n-1}^\bR = \bR_+^{n-1}$, it thus suffices to pick a point $z \in L_{LP}$, which belongs to $\bR_+^{n-1}$, and avoids the hyperplane configuration throughout the path. Choosing the point so that
  \begin{equation}
    1 \ll z_1 \ll \cdots \ll z_{n-1}    
  \end{equation}
  ensures that all the sign conditions on the components of $M\begin{pmatrix}z\\1\end{pmatrix}$ remain positive, so convex interpolation will preserve these inequalities, yielding the desired result.

  Finally, to see that our gradings agree with those of \cite{LekiliPolishchuk2020}, we note that the space of gradings is an affine space over the integral first cohomology of $H_{n-1}$, which has rank $n$. Since $L_{n-1}$ is contractible, we may identify the first homology of $H_{n-1}$ relative $L_{n-1}$ with the first homology of $H_{n-1}$. Inspecting the generators of the Floer complex of $H_{n-1}$ given in \cite{LekiliPolishchuk2020}, we see that the generators of the self-Floer cohomology of $L_{n-1}$ corresponding to the functions $x_i$ span the first homology. This means that any two grading structures on $H_{n-1}$ agree if the induced gradings on the self-Floer cohomology of $L_{n-1}$ are both supported in degree $0$ (c.f. \cite[Corollary 3.2.5]{LekiliPolishchuk2020}). This is the case for the gradings in \cite{LekiliPolishchuk2020} by their computation, and for our chosen gradings by noting that, if we compactify $H_{n-1}$ as the hyperplane $\sum_{i=0}^{n} Y_i = 0 $ in $\bC \bP^{n}$, the volume form in Equation \eqref{eq:holomorphic_volume_form} has a pole of order $1$ along the hypersurfaces $\{ Y_i = 0\}$ for $i \neq 0$ (but not for $Y_0 =0$). Since the closure of the Lagrangian $L_{n-1}$ in this compactification only meets the divisors along which the form has a pole of order $1$, we conclude that the Floer cohomology is supported in bounded degrees (c.f. \cite{Pomerleano2021}), hence necessarily in degree $0$ since any other choice of grading yields Floer groups supported in unbounded degrees.
\end{proof}

Next, we consider the left vertical arrow of Diagram \eqref{eq:side-by-side-squares-restriction}.  We associate to the potential
  \begin{equation}
   \prod_{i=0}^{m}x_i \co \bC^{m+1} \to \bC     
  \end{equation}
  a Liouville sector by restricting to the inverse image of the translate of the left half plane by a sufficiently large positive integer. The fibre of this Liouville sector is identified with $\left(\bC^* \right)^m$, and we simplify the notation by writing
  \begin{equation}
   \ell =  \bR_+^m \subset \left(\bC^* \right)^m.     
  \end{equation}
The next result is a computation of the self Floer cohomology of the image of this Lagrangian under the cup functor; since this computation is tied to the formulation of the cup functor itself, we include both in the statement:
  \begin{lem}[\cite{AbouzaidAuroux2021}] \label{lem:Abouzaid-Auroux}
    There is a commutative diagram of $A_\infty$ algebra maps
      \begin{equation}
      \begin{tikzcd}
 CF^*\left(\cup \ell \right)    \ar[d] &     CF^*(\ell) \ar[l] \ar[d] \\
        \bZ[y^{\pm}_1, \ldots, y_m^{\pm}]/ \left( 1 + \sum_{i=1}^{m} y_i \right)  & \bZ[y^{\pm}_1, \ldots, y_m^{\pm}]\ar[l]
      \end{tikzcd}
    \end{equation}
    in which the vertical arrows are quasi-isomorphisms.
  \end{lem}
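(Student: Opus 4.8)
The plan is to produce the entire diagram at the level of cohomology and then invoke formality. Both algebras in the bottom row are supported in degree $0$, so by the intrinsic formality of such algebras it suffices to compute the graded rings $HF^*(\ell)$ and $HF^*(\cup\ell)$, to check that each is concentrated in degree $0$, and to identify the ring map induced by the cup functor with the quotient projection; the $A_\infty$ refinement and the homotopy-commutativity of the square then follow formally.

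First I would dispatch the right vertical arrow. Writing $(\bC^*)^m \cong T^*T^m$ with $\log|x_i|$ the cotangent coordinates and $\arg x_i$ the coordinates on the base torus, the Lagrangian $\ell = \bR_+^m$ is exactly a cotangent fibre. Its (wrapped) self-Floer cohomology is therefore the Pontryagin ring of the based loop space, $HF^*(\ell) \cong H_{-*}(\Omega T^m) \cong \bZ[\pi_1 T^m] = \bZ[y_1^{\pm}, \dotsc, y_m^{\pm}]$, which is concentrated in degree $0$ because $\Omega T^m$ is homotopy discrete, and whose product is the group-ring multiplication. This furnishes the right vertical quasi-isomorphism.

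Next comes the cup functor, which exists as part of the Liouville-sector formalism and supplies the top horizontal $A_\infty$-algebra map $CF^*(\ell) \to CF^*(\cup\ell)$. The image $\cup\ell$ is the thimble obtained by parallel transporting $\ell$ along the positive real ray in the base of $\prod_{i=0}^m x_i$ down to the critical value $0$; concretely we may take it to be the positive real locus $\bR_{>0}^{m+1} \subset \bC^{m+1}$. The generators of its self-Floer cohomology are again indexed by the fibre windings, so $HF^*(\cup\ell)$ is naturally a quotient of $\bZ[y_i^{\pm}]$, and the relation is the signed count of holomorphic discs bounded by $\cup\ell$ that meet the coordinate divisor $\{\prod_{i=0}^m x_i = 0\}$ exactly once. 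Each of its $m+1$ components $\{x_i = 0\}$ is met by a single such disc, weighted by the winding it records: the disc through $\{x_0 = 0\}$ contributes $1$ and the disc through $\{x_i = 0\}$ contributes $y_i$ for $1 \le i \le m$. Their sum is $1 + \sum_{i=1}^m y_i$, so $HF^*(\cup\ell) \cong \bZ[y_i^{\pm}]/(1 + \sum y_i)$, again in degree $0$, and since the cup map acts as the identity on the fibre-winding generators it is precisely the quotient projection.

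The substantive work, and the main obstacle, is the disc analysis of the preceding paragraph: one must show that $\cup\ell$ is unobstructed and regular, classify the discs governing its differential and product so that only the $m+1$ divisor-crossing configurations produce relations (ruling out multiply-covered or higher-intersection contributions, exactly as in the proof of Proposition \ref{prop:m0-disc-classification}), and fix the signs and gradings so that the relation appears with the stated coefficients and every generator lands in degree $0$. The grading bookkeeping runs parallel to the volume-form computation used above for $L^+_{n-1}$ in Lemma \ref{lem:HMS-fibre}. This is precisely the computation carried out in \cite{AbouzaidAuroux2021}, which we import; granting it, formality assembles the commuting square of $A_\infty$-algebra maps.
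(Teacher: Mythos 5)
Your proposal is essentially correct and its skeleton matches the paper's sketch: reduce to cohomology by intrinsic formality of degree-$0$ algebras, identify $HF^*(\ell)$ with the group ring $\bZ[y_1^{\pm},\dotsc,y_m^{\pm}]$ via the cotangent-fibre/loop-space description, and obtain the relation $1+\sum y_i$ as a sum of $m+1$ contributions, one for each component of the divisor $\{\prod x_i=0\}$, weighted by winding relative to $\{x_0=0\}$. The one place where your argument is structurally different, and where you are leaning harder on the citation than the paper does, is the claim that ``$HF^*(\cup\ell)$ is naturally a quotient of $\bZ[y_i^{\pm}]$'' with the relation given by a divisor-crossing disc count. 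A priori a direct computation for the thimble could produce higher-degree classes or further relations. The paper derives the quotient structure formally: sphericality of the cup functor produces a closed element $\tau_{\cup}\in CF^*(\ell)$ with $HF^*(\cup\ell)\cong \operatorname{H}\bigl(\Cone(\tau_{\cup}\cdot\colon CF^*(\ell)\to CF^*(\ell))\bigr)$, and since $HF^*(\ell)$ is concentrated in degree $0$ and has no zero divisors, multiplication by $\tau_{\cup}$ is injective and the cone collapses to the cokernel $HF^0(\ell)/(\tau_{\cup})$. This is what guarantees that the only thing left to compute is the single element $\tau_{\cup}$. The paper then computes $\tau_{\cup}$ not on the non-compact thimble but after replacing the cotangent fibre by the zero-section $\bT^m$ equipped with its universal local system, which is the device that lets each divisor component contribute exactly one disc (by deforming the restriction of the potential to $\{x_i=0\}$'s complement to a trivial fibration and counting a single disc there). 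If you want your write-up to stand on its own rather than outsourcing the quotient structure to \cite{AbouzaidAuroux2021}, you should add the sphericality/cone step and the no-zero-divisors observation; with those in place your disc count correctly identifies $\tau_{\cup}=1+\sum_{i=1}^m y_i$, and formality finishes the argument exactly as you say.
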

  \begin{proof}[Sketch of proof:]
We start with some general considerations \cite{AbouzaidSmith2019,Sylvan2019a}: since the cup functor is spherical, there is a closed element $\tau_{\cup} \in CF^*(\ell)$ so that we have a commutative diagram
      \begin{equation}
      \begin{tikzcd}
        HF^*(\ell) \ar[r] \ar[dr] & HF^*\left(\cup \ell \right) \ar[d] \\
        & \mathrm{Cone}(  CF^*(\ell) \to  CF^*(\ell) ),
           \end{tikzcd}
    \end{equation}    
    where the morphism in the cone is multiplication by $\tau_{\cup}$, and the vertical map is an isomorphism. In our case, the Floer cohomology $HF^*(\ell) $ is supported in degree $0$, and is an algebra with no zero divisors, so that the above diagram simplifies to
     \begin{equation}
      \begin{tikzcd}
 HF^0\left(\cup \ell \right)\ar[dr] &        HF^0(\ell) \ar[l]  \ar[d] \\
        & HF^0(\ell)/ ( \tau_{\cup})
           \end{tikzcd}
         \end{equation}
at the cohomological level.
         
         In the formalism of \cite{Sylvan2019a} the element $\tau_{\cup}$ is characterised implicitly in terms of the existence of an adjoint to the cup functor. However, the map $HF^0(\ell) \to HF^0\left(\cup \ell \right)  $ is explicitly given as the isomorphism between the self-Floer cohomology of $\ell$ in the fibre and the self-Floer cohomology of its product with a cotangent fibre in a neighbourhood of the boundary modelled after $F \times T^* [0,1]$, followed by the continuation map from local to global wrapping. This isomorphism is natural, which  allows us to replace $\ell$ (which we recall is the positive real locus in our situation) with the compact Lagrangian $\bT^{m} \subset \left(\bC^* \right)^m$, which corresponds to the $0$-section of the cotangent bundle under the symplectomorphism $\left(\bC^* \right)^m \cong  T^* \bT^{m}$. The reason to pass to a compact Lagrangian is to ensure that conical Lagrangians can be produced by parallel transport along arcs in the base of our chosen Landau-Giznburg model.

         More precisely, we replace the cotangent fibre by the $0$-section, equipped with the local system corresponding to the group ring (c.f.  \cite{Abouzaid2011,Abouzaid2012a}). These objects of the Fukaya category are not isomorphic, but the Floer homology from $\ell$ to this local system is a free module of rank-$1$ over $HF^*(\ell, \ell)$; choosing the intersection point between the fibre and the $0$-section to be the base point       fixes the isomorphism
    \begin{equation}
         HF^*(\ell) \cong   \bZ[y^{\pm}_1, \ldots, y_m^{\pm}].   
    \end{equation}

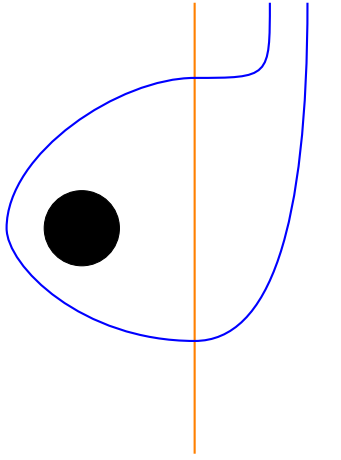
\begin{figure}[h]
  \centering
  \begin{tikzpicture}
    \draw[thick] (0,-3) -- (0,3);
\filldraw[black]  (-3.5,0) circle (.5);
\draw[orange, thick]  (-2 ,-3) -- (-2,3);
  \draw[blue,thick] (-1,3) .. controls (-1,2)  .. (-2,2) .. controls (-3,2) and (-4.5,1) ..  (-4.5,0) .. controls  (-4.5,-0.5) and (-3.5,-1.5) .. (-2,-1.5) .. controls (-1,-1.5) and (-0.5,0) .. (-0.5,3);

\end{tikzpicture} 
  \caption{Computing the cup functor on fibrewise compact Lagrangians in Liouville sectors associated to Landau-Ginzburg models: the endomorphisms of the image of the cup functor are given by the Floer homology groups of the orange and blue Lagrangians. The black circle represents the region where there may be some critical points.}
  \label{fig:Orlov-functor}
\end{figure}

The above considerations reduce the problem to computing the element $ \tau_{\cup} $ for the torus, equipped with its universal local system, which we do indirectly. We can decompose the contributions by intersection number with the components of the fibre: since $\tau_{\cup}$ is a count of strips as in Figure \ref{fig:Orlov-functor}, and the fibre at $0$ consists of complex hypersurfaces, we can decompose $\tau_{\cup}$ as a sum of contributions indexed by the components of this fibre. The count of curves passing through the irreducible component $\{x_i = 0\}$ can be identified with a count of discs in the product of $ \bC_{x_i} $ with $\bC^*_{x_j}$ for $j \neq i$. Note that the restriction of the potential function $\prod_{i} x_i$ to this open subset is a holomorphically trivial fibration with fibre $\left( \bC^*\right)^{m}$. Deforming the symplectic form to the standard one, we see that the count of discs in this relative homotopy class is $1$, hence that the element $\tau_{\cup} $ is a sum of $m+1$ terms; the expression $1 + \sum_{i=1}^{m} y_i$ arises by observing that the different trivialisation differ by their winding number (we use the component $\{x_0=0\}$ as the reference component, in which case $y_i$ is the winding number of $x_i/x_0$).

Finally, having computed the cup functor map on cohomology, we note all the higher terms of an $A_\infty$ homomorphism of graded $A_\infty$ algebras that are supported in degree $0$ must vanish for degree reasons. This implies that our cohomological computation yields the desired cochain level statement.
  \end{proof}

\begin{cor} \label{cor:HMS-left-vertical-arrow}
  The self-Floer cohomology of the Lagrangian $\cup  \left( L^+_{n-1}   \times \bR_+^m \right)$, as an object of the Fukaya category of $H_{n-1} \times \bC^{m+1} $  stopped at  $H_{n-1} \times  \left(\bC^{*}\right)^m $, is isomorphic to the ring  of functions on $ \Pi_{n-1} \times H_{m-1}$, and the induced Yoneda functor
  \begin{equation}
    \cF(    H_{n-1} \times \left(\bC^{*}\right)^m) \to \mod_{\bk[x_j, y_i^{\pm}]/(\prod x_j, 1 + \sum y_i)}  
  \end{equation}
  is a fully faithful embedding onto the subcategory corresponding to the derived category of coherent sheaves.
\end{cor}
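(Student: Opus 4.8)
The plan is to deduce the Corollary from the product structure of the ambient sector together with the two preceding lemmas. The point is that $H_{n-1}\times\bC^{m+1}$ stopped at $H_{n-1}\times\left(\bC^{*}\right)^m$ is a product of $H_{n-1}$, which carries no stop, with the Landau--Ginzburg sector $\left(\bC^{m+1},\left(\bC^{*}\right)^m\right)$ associated to $\prod x_i$. First I would invoke the K\"unneth theorem for partially wrapped Fukaya categories \cite{GanatraPardonShende2019} to identify $\cF$ of the sector with $\cF(H_{n-1})\otimes\cF(\bC^{m+1},\left(\bC^{*}\right)^m)$, and to see that the cup functor factors as $\id_{\cF(H_{n-1})}\otimes\cup$, with $\cup$ the cup functor of Lemma \ref{lem:Abouzaid-Auroux}. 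In particular $\cup\!\left(L^+_{n-1}\times\bR_+^m\right)$ is quasi-isomorphic to the exterior product $L^+_{n-1}\times\cup\ell$.

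The endomorphism algebra is then computed by the K\"unneth isomorphism $CF^*(L^+_{n-1}\times\cup\ell)\cong CF^*(L^+_{n-1})\otimes CF^*(\cup\ell)$. The first factor is the ring $\bk[x_j]/\prod x_j$ of functions on $\Pi_{n-1}$, by the $H_{n-1}$-component of Lemma \ref{lem:HMS-fibre}, and the second is $\bZ[y^{\pm}_i]/\left(1+\sum y_i\right)$, the ring of functions on $H_{m-1}$, by Lemma \ref{lem:Abouzaid-Auroux}. Their tensor product is exactly $\bk[x_j,y^{\pm}_i]/\left(\prod x_j,\,1+\sum y_i\right)$, the ring of functions on $\Pi_{n-1}\times H_{m-1}$. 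Since both factors are concentrated in degree $0$, so is their product, and the intrinsic formality of degree-$0$ algebras upgrades this cohomological identification to a quasi-isomorphism of $A_\infty$ algebras, establishing the first assertion.

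For the statement about the Yoneda functor I would argue factor by factor, using that $\cup\!\left(L^+_{n-1}\times\bR_+^m\right)$ is mirror to the structure sheaf of $\Pi_{n-1}\times H_{m-1}$. Under the K\"unneth decomposition the functor splits as the tensor product of the functor $\cF(H_{n-1})\to\mod_{CF^*(L^+_{n-1})}$, which is fully faithful onto $D^b\Coh(\Pi_{n-1})$ by \cite{LekiliPolishchuk2020} exactly as in Lemma \ref{lem:HMS-fibre}, with the functor induced on $\cF(\left(\bC^{*}\right)^m)$ by the cup functor. For the latter I would use that the fibre $\ell=\bR_+^m$ generates $\cF(\left(\bC^{*}\right)^m)\cong D^b\Coh(\left(\bC^{*}\right)^m)$ and that Lemma \ref{lem:Abouzaid-Auroux} identifies the effect of $\cup$ on endomorphisms with the quotient $\bZ[y^{\pm}]\to\bZ[y^{\pm}]/\left(1+\sum y_i\right)$ mirror to $H_{m-1}\hookrightarrow\left(\bC^{*}\right)^m$; together these exhibit $\cup\ell$ as a generator of the coherent subcategory with the correct endomorphism ring, so that testing against it is fully faithful onto $D^b\Coh(H_{m-1})$. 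Tensoring the two fully faithful factors yields the claimed embedding onto the subcategory corresponding to $D^b\Coh(\Pi_{n-1}\times H_{m-1})$.

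The main obstacle I expect is the second factor of the full faithfulness argument. Unlike the fibre case of Lemma \ref{lem:HMS-fibre}, the cup functor is only one piece of a semi-orthogonal decomposition and is not fully faithful on the nose, being mirror to the restriction $i^*$; one must therefore isolate the generating object $\cup\ell$ and verify separately that its endomorphism algebra is the ring of functions on $H_{m-1}$ (which is precisely the content of Lemma \ref{lem:Abouzaid-Auroux}) and that it generates the relevant coherent subcategory. The formality step, while routine given the degree-$0$ support, is what licenses passing all of these cohomological identifications to the cochain level.
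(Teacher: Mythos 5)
Your overall strategy --- K\"unneth to split the sector into the $H_{n-1}$ factor and the Landau--Ginzburg factor $(\bC^{m+1},(\bC^{*})^m)$, then treat the two component Yoneda functors separately, with Lemma \ref{lem:HMS-fibre} handling the first and Lemma \ref{lem:Abouzaid-Auroux} handling the endomorphism computation of the second --- is exactly the paper's, and your identification of the endomorphism ring with $\bk[x_j,y_i^{\pm}]/(\prod x_j, 1+\sum y_i)$, including the appeal to intrinsic formality, is correct.

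There is, however, a genuine gap in your full-faithfulness argument for the second factor. For the Yoneda functor on the \emph{entire} partially wrapped category $\cF(\bC^{m+1},(\bC^{*})^m)$ to be fully faithful, the object $\cup\ell$ must split-generate that whole category, not merely the image of the cup functor. You argue that $\ell$ generates $\cF((\bC^{*})^m)$ and conclude that $\cup\ell$ generates ``the relevant coherent subcategory,'' but a priori the partially wrapped category could contain objects outside the subcategory generated by linking discs (e.g.\ cocores of the ambient $\bC^{m+1}$), on which the Yoneda functor would then fail to be faithful; your mirror-side phrasing (``generator of the coherent subcategory'') presupposes the equivalence one is trying to establish. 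The missing ingredients are precisely the two facts the paper invokes: by stop removal the quotient of $\cF(\bC^{m+1},(\bC^{*})^m)$ by the subcategory generated by the image of the cup functor is $\cF(\bC^{m+1})$, and $\cF(\bC^{m+1})$ is trivial (subcritical), so that image --- hence $\cup\ell$, since $\ell$ generates the fibre category --- generates everything. With that generation statement supplied, the rest of your argument goes through as written.
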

\begin{proof}
  Arguing as in the proof of Lemma \ref{lem:HMS-fibre}, we can appeal to the K\"unneth theorem and reduce the problem to showing that the two component Yoneda functors are fully faithful. The first factor is the same as in Lemma \ref{lem:HMS-fibre}, and using the above result reduces the problem for the second factor to showing that $\cup\ell$ split-generates the Fukaya category. This follows from stop removal \cite{Sylvan2019, GanatraPardonShende2019}, the fact that $\ell$ generates the Fukaya category of the fibre, and the fact that $\cF(\bC^{m+1})$ is trivial.
\end{proof}

Finally, we consider the other side of this diagram. The next result can be extracted from work of Hanlon and Hicks (c.f. \cite[Theorem 6.2]{HanlonHicks2021}).
  \begin{lem}
    \label{lem:positive_real_locus_generates}
    The self-Floer cohomology of the positive real locus of $\left(\bC^{*}\right)^{n+m}$, stopped at $H_{n-1} \times \left(\bC^{*}\right)^m$  is isomorphic to the ring  of functions on $ \bC^{n} \times \left(\bC^{*}\right)^m$. This Lagrangian generates this partially wrapped category, so that the induced Yoneda functor
  \begin{equation}
 \cF(\left(\bC^{*}\right)^{n} \times \left(\bC^{*}\right)^m, H_{n-1} \times \left(\bC^{*}\right)^m)  \to \mod_{\bk[x_j, y_i^{\pm}]/ 1 + \sum y_i }  
  \end{equation}
  is a fully faithful embedding onto the subcategory corresponding to the derived category of coherent sheaves.
  \end{lem}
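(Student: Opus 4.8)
The plan is to reduce the statement to the toric homological mirror symmetry of \cite{HanlonHicks2021}, after separating off the untouched $\left(\bC^*\right)^m$ factor by a Künneth argument of the kind already used in Lemma \ref{lem:HMS-fibre}. Write the ambient manifold as $\left(\bC^*\right)^{n} \times \left(\bC^*\right)^m$ and observe that the stop $H_{n-1} \times \left(\bC^*\right)^m$ exhibits the sector as the product of the Liouville sector $\left(\left(\bC^*\right)^n, H_{n-1}\right)$ with the Liouville manifold $\left(\bC^*\right)^m$, while the positive real locus splits as $\bR_+^{n} \times \bR_+^{m}$. By the Künneth theorem of \cite{GanatraPardonShende2019}, the self-Floer cohomology, the generation statement, and the full faithfulness of the Yoneda functor all factor through the two factors. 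The factor $\left(\bC^*\right)^m$ is elementary: under $\left(\bC^*\right)^m \cong T^*T^m$ the locus $\bR_+^m$ is a cotangent fibre, which generates the wrapped category and whose self-Floer cohomology is the group ring $\bk[y_1^{\pm}, \dotsc, y_m^{\pm}]$, the ring of functions on $\left(\bC^*\right)^m$.

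All the content therefore sits in the factor $\left(\bC^*\right)^n$ stopped at $H_{n-1}$, for which I would invoke Theorem 6.2 of \cite{HanlonHicks2021}. That result identifies the partially wrapped Fukaya category of $\left(\bC^*\right)^n$, stopped at the skeleton attached to the fan of $\bC^n$, with $\Coh(\bC^n)$, in such a way that the Lagrangian mirror to the structure sheaf is the cotangent fibre of $T^*T^n$ (equivalently, the section of the $\operatorname{Log}$ map with vanishing argument). The positive real locus $\bR_+^n$ is exactly this Lagrangian, so it corresponds to $\mathcal O_{\bC^n}$; its self-Floer cohomology is the polynomial ring $\bk[x_1, \dotsc, x_n]$, reflecting that the stop obstructs the wrapping responsible for the negative monomials, leaving precisely the subring of $\bk[x_1^{\pm}, \dotsc, x_n^{\pm}]$ of functions regular on $\bC^n$. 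Combining the two factors by Künneth yields the claimed isomorphism with the ring of functions on $\bC^n \times \left(\bC^*\right)^m$.

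Generation and full faithfulness then follow formally. The structure sheaf generates $D^b\Coh$ of an affine variety, so $\bR_+^n$ generates the stopped category of the first factor; together with the cotangent-fibre generation of the second factor and the product generation statement of \cite{GanatraPardonShende2019}, this shows that $\bR_+^{n+m}$ split-generates. Since the resulting endomorphism algebra is supported in degree $0$, it is intrinsically formal, so the Yoneda functor is determined by its action on cohomology and is a fully faithful embedding onto the subcategory of modules modelling $D^b\Coh(\bC^n \times \left(\bC^*\right)^m)$, exactly as in Lemma \ref{lem:HMS-fibre}.

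The main obstacle is the compatibility of the hypotheses of \cite{HanlonHicks2021} with our geometric setup, and this divides into two points. First, the stop used in the cited theorem is the combinatorial skeleton of the fan of $\bC^n$, whereas ours is the smooth very affine hypersurface $H_{n-1}$; I would bridge this using that the tropicalisation of $H_{n-1}$ reproduces this fan, so that the skeletal and hypersurface stops are equivalent by the stop-comparison results underlying \cite{HanlonHicks2021}. Second, one must pin down the grading: as in the closing paragraph of the proof of Lemma \ref{lem:HMS-fibre}, the grading induced by our holomorphic volume form must be the one for which the self-Floer cohomology of $\bR_+^n$ is concentrated in degree $0$, rather than a grading that would spread it into unbounded degrees. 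I expect verifying this stop-and-grading dictionary to be the delicate part, the Künneth reduction and the formality argument being routine.
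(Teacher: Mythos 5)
Your proposal is essentially correct, but it takes a genuinely different route from the paper's own argument, and in fact it is the route the paper explicitly declines to follow: the sentence introducing this lemma notes that the result ``can be extracted from'' \cite[Theorem 6.2]{HanlonHicks2021}, and then the proof given in the text avoids toric HMS entirely. The paper instead observes that, by Gammage--Shende \cite{GammageShende2017}, the sector associated to the Thom--Sebastiani sum of potentials is the product of the sectors of the factors, so that the pair $\left(\left(\bC^{*}\right)^{n} \times \left(\bC^{*}\right)^m, H_{n-1} \times \left(\bC^{*}\right)^m\right)$ splits as a product of $n$ copies of $(\bC^*, \mathrm{pt})$ and $m$ copies of $\bC^*$; each one-dimensional factor has a Weinstein presentation with a unique handle whose co-core is $\bR_+$, so the product of positive real loci is the unique co-core and generates by the co-core generation theorem (see the remark following the lemma for the care needed about linking discs), and the endomorphism ring is then computed by K\"unneth from the elementary one-dimensional calculations. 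Your approach buys a direct identification with $\Coh(\bC^n)$ at the price of two nontrivial compatibilities that you correctly flag but do not resolve: the comparison between the FLTZ/fan skeleton stop of \cite{HanlonHicks2021} and the hypersurface stop $H_{n-1}$, and the matching of gradings. The paper's route sidesteps both issues, since the product-of-sectors decomposition produces the hypersurface stop on the nose and the degree-zero computation is immediate in dimension one; it also only needs a Yoneda embedding rather than a full categorical equivalence for the first factor. Both arguments are valid, but you should be aware that the stop-comparison step you defer is a genuine piece of work (it is essentially the content of Gammage--Shende and Zhou's skeleton comparisons), whereas the paper's reduction to one-dimensional pieces makes the whole lemma elementary.
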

  \begin{proof}
    The positive real locus is the unique handle for a Weinstein presentation both of $\bC^*$, and of the Weinstein sector corresponding to the potential $x$ on $\bC^*$.     By Gammage-Shende \cite{GammageShende2017}, the sector associated to a product potential is isomorphic to the product of the sectors corresponding to each factor. In particular, the product of the positive real loci is the unique handle for a Weinstein presentation of the pair $(\left(\bC^{*}\right)^{n} \times \left(\bC^{*}\right)^m, H_{n-1} \times \left(\bC^{*}\right)^m) $, hence generates the corresponding partially wrapped category. The computation of the self-Floer cohomology then follows from the K\"unneth formula.
  \end{proof}

  \begin{rem} In the above proof, we use the fact that the co-cores of a Weinstein sector generates its Fukaya category. The statement that appears in the literature (c.f.  \cite[Theorem 1.2]{ChantraineDimitroglouGhigginiGolovko2019} and \cite[Theorem 1.10]{GanatraPardonShende2019}), is that co-cores generate together with linking discs of the Legendrian stop. This distinction can be eliminated by choosing the Weinstein structure near the boundary to have a local minimum, rather than to be modelled after a cotangent bundle. Figure \ref{fig:linking_discs_are_cocores} shows how to make this replacement for a given Weinstein presentation.  To conclude that the co-cores of an arbitrary Weinstein structure which is modelled near the boundary by the right side of Figure \ref{fig:linking_discs_are_cocores}, we can appeal to \cite[Proposition 1.29]{GanatraPardonShende2019} and \cite[Proposition 3.3]{Lazarev2019} which together imply that the category generated by the co-cores does not depend on the choice of Weinstein presentation.
  \end{rem}
  \begin{figure}[h]
  \centering
  \begin{tikzpicture}
    \begin{scope}
    \draw[thick] (0,-2) -- (0,2);
    \draw[thin] (-2,0) -- (0,0);
    \foreach \i in {0, ..., 4}
      {
    \draw[->, thin] (-\i*.5,1) -- ++ (90:1);
    \draw[->, thin] (-\i*.5,-1) -- ++ (-90:1);
    \draw[->, thin] (-\i*.5,.2) -- ++ (90:.2);
    \draw[->, thin] (-\i*.5,-.2) -- ++ (-90:.2);
  };

  \draw[orange, thick]  (-1.5,-2) -- (-1.5,2);
    \end{scope}
    
  \begin{scope}[shift={(5,0)}]
    \draw[thick] (0,-2) -- (0,2);
    \draw[->, thin] (-1,0) -- ++ (180:.2);
    \draw[->, thin] (-.5,0) -- ++ (180:.2);
    \draw[->, thin] (-2,0) -- ++ (0:.2);
 \draw[->, thin] (-.5,1) -- ++ (105:1);
 \draw[->, thin] (-.5,-1) -- ++ (-105:1);
  \draw[->, thin] (-.5,.2) -- ++ (135:.2);
    \draw[->, thin] (-.5,-.2) -- ++ (-135:.2);
    
    \foreach \i in {0, 3}
      {
    \draw[->, thin] (-\i*.5,1) -- ++ (90:1);
    \draw[->, thin] (-\i*.5,-1) -- ++ (-90:1);
    \draw[->, thin] (-\i*.5,.2) -- ++ (90:.2);
    \draw[->, thin] (-\i*.5,-.2) -- ++ (-90:.2);
  };
  \foreach \i in {-1, 1}
  {
  
    \draw[->, thin] (-1.5+\i*.5,1) -- ++ (90+\i*15:1);
    \draw[->, thin] (-1.5+-\i*.5,-1) -- ++ (-90+\i*15:1);
    \draw[->, thin] (-1.5+-\i*.5,.2) -- ++ (90-\i*45:.2);
    \draw[->, thin] (-1.5+-\i*.5,-.2) -- ++ (-90+\i*45:.2);
  };

  \draw[orange, thick]  (-1.5,-2) -- (-1.5,2);
  \filldraw (-1.5,0) circle (2pt) ;
  \filldraw (0,0) circle (2pt) ;
    \end{scope}
  \end{tikzpicture}
  \caption{The vertical (orange) line represents a linking disc on the left, and the corresponding co-core on the right.}
   \label{fig:linking_discs_are_cocores}
\end{figure}
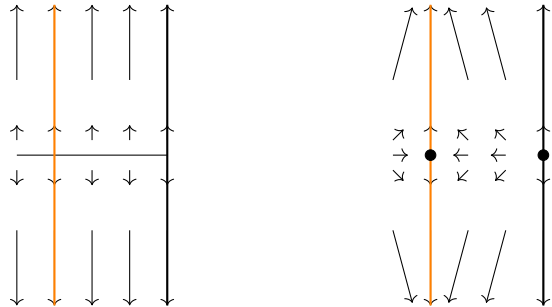

\subsection{Compatibility of mirror symmetry with restriction}
\label{sec:comp-mirr-symm-1}

Our goal in this section is to explain the commutativity of the left square in Diagram \eqref{eq:side-by-side-squares-restriction}. As a starting point, recall that the pullback map on derived categories of coherent sheaves of affine varieties associated to a ring map $R \to S$ can be expressed on (complexes of) modules as a tensor product with $S$, considered as an $R-S$-bimodule. Lemma \ref{lem:HMS-fibre} and Corollary \ref{cor:HMS-left-vertical-arrow} thus reduce the problem to proving the commutativity of the diagram
      \begin{equation}\label{diag:cup_and_Koszul}
  \begin{tikzcd}[column sep=tiny]
\cF(H_{n-1} \times \bC^{m+1},H_{n-1} \times  \left(\bC^{*}\right)^m ) \ar[d]  &    \cF(H_{n-1} \times \left(\bC^{*}\right)^m ) \ar[l] \ar[d] \\ \mod_{CF^*(\cup \left(L^{\bR}_{n-1} \times \bR_+^m \right) ) }  & \ar[l]\mod_{CF^*(L^{\bR}_{n-1} \times \bR_+^m)  }  \\
  \end{tikzcd}
\end{equation}
where the bottom horizontal map is tensoring with $CF^*(\cup L^{\bR}_{n-1} \times \bR_+^m) ) $ over $CF^*(L^{\bR}_{n-1} \times \bR_+^m) $, as long as we can compute that this map agrees with the mirror map:
\begin{multline}
     \bZ[x_1, \ldots, x_n, y^{\pm}_1, \ldots, y_m^{\pm}]/\left( \prod x_j \right)  \to \\ \bZ[x_1, \ldots, x_n, y^{\pm}_1, \ldots, y_m^{\pm}]/ \left( \prod x_j,  1 + \sum_{i=1}^{m} y_i \right).
\end{multline}
Lemma \ref{lem:Abouzaid-Auroux} computes this map if we ignore the $H_{n-1} $ factor. In order to obtain the desired computation, we therefore need a compatibility statement between cup functors associated to Liouville sectors and K\"unneth theorems for wrapped Floer cohomology. Because of intrinsic formality,  the required statement are at the cohomological level, which greatly simplifies the proof. For the statement, let $M$ be a Liouville sector with fibre $F$, and let $\ell \subset F$ be a properly embedded exact Lagrangian. We write $\cup \ell \subset M$ for the image of $\ell$ under the (Orlov) cup functor. Let $L \subset H$ be an exact Lagrangian in a Liouville domain. The following result is proved in Appendix \ref{sec:comp-k:unn-maps}:
  \begin{lem}
    The cup functors fit in a commutative diagram
    \begin{equation}
      \begin{tikzcd}
        HF^*(\ell \times L) \ar[r] \ar[d] & HF^*\left(\cup (\ell \times L) \right) \ar[d] \\
        HF^*(\ell) \otimes HF^*(L) \ar[r] & HF^*\left(\cup \ell\right)  \otimes HF^*(L).
      \end{tikzcd}
    \end{equation}
  \end{lem}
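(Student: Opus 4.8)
The plan is to reduce the statement to the single geometric identity $\cup(\ell \times L) \simeq (\cup\ell) \times L$ and then to deduce commutativity from the multiplicativity of the curve counts that define both the cup functor and the K\"unneth map. First I would recall the local nature of the cup functor: for a Liouville sector $M$ with fibre $F$, a neighbourhood of the stop is modelled on $F \times \bC_{\mathrm{Re}\geq 0}$ (equivalently $F \times T^*[0,1]$), and $\cup\ell$ is produced from $\ell$ sitting in a fibre by the inward parallel transport/wrapping used in Figure \ref{fig:Orlov-functor}, with the cohomological map $HF^*(\ell) \to HF^*(\cup\ell)$ given by the associated count of strips. The key point is that this construction only alters the $F$-direction and the base-direction of the sector.

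Next I would identify the Liouville sector structure on $M \times H$, where $H$ is a \emph{domain} (no competing stop). Its fibre is $F \times H$ and its base projection factors through the projection to $M$, so near the boundary it is modelled on $\left(F \times \bC_{\mathrm{Re}\geq 0}\right) \times H$. Since the cup construction is confined to the $M$-factor, I would conclude $\cup(\ell \times L) \simeq (\cup\ell) \times L$ as objects of $\cF(M \times H)$. This is where the hypothesis that $H$ is a domain is essential: there is no stop in the $H$-direction for the functor to interact with, so the $L$-factor is transported unchanged.

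Granting this identity, I would establish the square on cohomology as follows. The two horizontal arrows are both defined by counting the strips of Figure \ref{fig:Orlov-functor}, and the two vertical arrows are the K\"unneth quasi-isomorphisms of \cite{GanatraPardonShende2019}. Choosing a product almost complex structure and a split Hamiltonian near the stop, Gromov compactness forces each cup-strip in $M \times H$ to split as a product of a cup-strip in $M$ with a strip in $H$, the latter necessarily trivial (constant in the relevant homotopy class). This splitting of moduli spaces is exactly the assertion that K\"unneth followed by $(\text{cup}) \otimes \mathrm{id}$ agrees with cup followed by K\"unneth. As noted in the surrounding discussion, intrinsic formality lets me stop here: all the algebras involved are supported in degree $0$, the higher $A_\infty$-terms vanish for degree reasons, and so the cohomological square suffices without chain-level homotopy coherence.

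The main obstacle I anticipate is the simultaneous choice of Floer data: I must arrange a sufficiently split almost complex structure and perturbation so that the cup-strip moduli genuinely factor as a product, while at the same time these data compute the K\"unneth isomorphism. Concretely, the wrapping defining $HF^*(\cup\ell)$ inside $M$ and the wrapping defining $HF^*(L)$ inside $H$ must be choosable independently and then recombined without introducing new boundary-parallel strips; verifying that no extra curves arise at the interface (and that transversality can be maintained for the split data) is the delicate analytic step, and is presumably what the deferred argument in Appendix \ref{sec:comp-k:unn-maps} carries out in detail.
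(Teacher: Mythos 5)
Your conceptual skeleton matches the paper's: the proof in Appendix~\ref{sec:comp-k:unn-maps} also rests on the identity $\cup(\ell \times L) = (\cup\ell) \times L$ (in the Hamiltonian model, $\cup$ just takes the product with a cotangent fibre $T^*_{1/2}[0,1)$ in the collar, so it visibly commutes with taking products with the stop-free factor), on the splitting of the relevant Floer data, and on intrinsic formality to reduce everything to cohomology. However, the step you dispose of in one sentence --- ``choosing a product almost complex structure and a split Hamiltonian near the stop'' --- is exactly where your argument has a genuine gap, and it is where essentially all of the content of the paper's proof lives. A split Hamiltonian $H_1+H_2$ on a product of Liouville manifolds is \emph{not} admissible for defining wrapped Floer cohomology: the cylindrical end of $M_1\times M_2$ is not the product of the ends, the level sets of $H_1+H_2$ are not of contact type, and a product almost complex structure is not cylindrical for the product Liouville structure, so neither the standard compactness arguments nor the identification of the resulting groups with $HW^*$ of the product is available. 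You cannot simultaneously demand that the Floer data be split (so that moduli spaces factor and the cup map is computed factorwise) and that it be the data defining the wrapped invariants of $M_1\times M_2$; reconciling these is not a transversality bookkeeping issue but requires a construction.

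The paper resolves this with the interpolated Hamiltonians $H_{12}^{(m)}$ of Equation~\eqref{eq:product_Hamiltonian}, which are genuinely linear along a large contact hypersurface (so the integrated maximum principle applies and the groups are well defined) but agree with $H_1^{(m)}+H_2^{(m)}$ on the region containing all chords; Lemma~\ref{lem:product_ham} shows that after suitable rescaling all time-$j$ chords are split, and the double diagram \eqref{diag:wrapping_hams} of continuation and homotopy-method maps is then needed to prove that the colimit of these ad hoc groups really computes $HW^*$ of the completed product. Only after that machinery is in place does the argument you sketch --- the cup map is an inclusion of the subcomplex of chords lying over the zero section, and strips starting in that fibre stay in it by positivity of intersection --- close the square. (A minor additional imprecision: the horizontal maps are not the strip counts of Figure~\ref{fig:Orlov-functor}, which compute $\tau_\cup$ and the endomorphisms of $\cup\ell$; the map $HF^*(\ell)\to HF^*(\cup\ell)$ is the local identification near the stop followed by a continuation map, which is what makes the subcomplex description available.) So: right strategy, but the deferred ``delicate analytic step'' is the theorem, not a footnote to it.
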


We shall now prove the commutativity of Diagram \eqref{diag:cup_and_Koszul}:
\begin{lem}
  \label{lem:left-side}
  The left square in Diagram \eqref{eq:side-by-side-squares-restriction} commutes.
\end{lem}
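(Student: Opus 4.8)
The plan is to assemble the reductions already in place with two computational inputs: the Künneth--cup compatibility lemma stated above and Lemma \ref{lem:Abouzaid-Auroux}. By Lemma \ref{lem:HMS-fibre} and Corollary \ref{cor:HMS-left-vertical-arrow}, the two vertical Yoneda functors of the left square are fully faithful embeddings onto the coherent subcategories, identifying $\Coh(\Pi_{n-1} \times (\bC^*)^m)$ and $\Coh(\Pi_{n-1} \times H_{m-1})$ with modules over $CF^*(L^{\bR}_{n-1} \times \bR^m_+)$ and $CF^*(\cup(L^{\bR}_{n-1} \times \bR^m_+))$ respectively. Under these equivalences the left square becomes Diagram \eqref{diag:cup_and_Koszul}, whose bottom arrow is base change along the ring map that the cup functor induces on endomorphisms. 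Since the pullback functor on the coherent side is base change along the surjection $\bk[x_j, y_i^\pm]/(\prod x_j) \to \bk[x_j, y_i^\pm]/(\prod x_j, 1 + \sum_{i=1}^{m} y_i)$, the lemma reduces, exactly as indicated by the discussion preceding it, to identifying this cup-induced ring map with that surjection.

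First I would invoke the product structure $L^{\bR}_{n-1} \times \bR^m_+ = L^{\bR}_{n-1} \times \ell$, regarding $\ell = \bR^m_+$ as the fibrewise Lagrangian in the sector $\bC^{m+1}$ (whose fibre is $(\bC^*)^m$) that gets cupped, and $L^{\bR}_{n-1} \subset H_{n-1}$ as the untouched domain factor. Applying the Künneth--cup compatibility lemma stated above (proved in Appendix \ref{sec:comp-k:unn-maps}) to this product yields a commutative square expressing the cup-induced map on $HF^*(L^{\bR}_{n-1} \times \ell)$ as the tensor product of the identity on $HF^*(L^{\bR}_{n-1})$ with the cup-induced map on $HF^*(\ell)$. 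By Lemma \ref{lem:Abouzaid-Auroux} the latter is precisely the quotient $\bZ[y_1^\pm, \ldots, y_m^\pm] \to \bZ[y_1^\pm, \ldots, y_m^\pm]/(1 + \sum_{i=1}^{m} y_i)$, while the first factor carries $HF^*(L^{\bR}_{n-1}) \cong \bk[x_1, \ldots, x_n]/(\prod x_j)$ identically to itself. Tensoring the two, the cup-induced ring map is exactly the quotient $\bk[x_j, y_i^\pm]/(\prod x_j) \to \bk[x_j, y_i^\pm]/(\prod x_j, 1 + \sum_{i=1}^{m} y_i)$, which is the mirror algebra map, i.e. the pullback.

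Finally, since every algebra in sight is supported in degree $0$ and hence intrinsically formal, this cohomology-level identification of ring maps upgrades to an identification of the corresponding $A_\infty$ base-change functors with no higher corrections, so that Diagram \eqref{diag:cup_and_Koszul} commutes; transporting back along the Yoneda equivalences gives the commutativity of the left square. The one genuine obstacle is the Künneth--cup compatibility itself, namely that the cup functor commutes with taking products of Lagrangians and with the corresponding tensor decomposition of Floer cohomology; this is precisely the content deferred to Appendix \ref{sec:comp-k:unn-maps}, and once it is granted the remaining argument is formal, aided by the degree-$0$ concentration that makes intrinsic formality available.
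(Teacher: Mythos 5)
There is a genuine gap: you have reproduced (correctly, and in more detail) the reduction that the paper carries out \emph{before} the proof of Lemma \ref{lem:left-side} --- namely, that by Lemma \ref{lem:HMS-fibre} and Corollary \ref{cor:HMS-left-vertical-arrow} the left square becomes Diagram \eqref{diag:cup_and_Koszul}, and that the K\"unneth--cup compatibility together with Lemma \ref{lem:Abouzaid-Auroux} identifies the ring map $CF^*(L^{\bR}_{n-1}\times\bR_+^m)\to CF^*(\cup(L^{\bR}_{n-1}\times\bR_+^m))$ on cohomology with the quotient $\bk[x_j,y_i^{\pm}]/(\prod x_j)\to\bk[x_j,y_i^{\pm}]/(\prod x_j,\,1+\sum y_i)$, with intrinsic formality upgrading this to the chain level. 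But that computation only pins down the \emph{bottom arrow} of \eqref{diag:cup_and_Koszul}. It does not prove that the square commutes, which is the actual content of the lemma: one must show that for \emph{every} object $K$ of $\cF(H_{n-1}\times(\bC^*)^m)$ the natural comparison map
\begin{equation*}
\hom(L,K)\otimes_{\hom(L,L)}\hom(\cup L,\cup L)\;\longrightarrow\;\hom(\cup L,\cup K)
\end{equation*}
is a quasi-isomorphism, where $L=L^{\bR}_{n-1}\times\bR_+^m$. Identifying the ring map gives this only for $K=L$, where it is tautological.

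The paper's proof of Lemma \ref{lem:left-side} is devoted entirely to this missing step. It writes an arbitrary $K$ as a finite colimit of filtered colimits of copies of $L$ (using that $L$ generates and that the vertical Yoneda functors are fully faithful, so that colimit diagrams of modules lift to the Fukaya category), observes that the horizontal cup functors preserve these colimits because they are spherical and hence left adjoints, uses compactness of $\tilde L$ to commute $\hom$ past the colimits in the module categories, and checks via full faithfulness that the tautological maps $\colim\hom(L,L)\to\hom(L,\colim L)$ are quasi-isomorphisms upstairs as well; this reduces everything to the case $K=L$, which is the Yoneda lemma. Your ``intrinsic formality'' remark addresses only the chain-level ambiguity in the ring map, not this functorial commutativity, so some version of the generation-and-colimit-preservation argument (or an equivalent one) must be supplied for the proof to be complete.
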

  \begin{proof}
   Write $L$ for the object $L_{n-1}^\bR \times \bR_+^m \in \cF(H_{n-1} \times (\bC^*)^m)$, let $K$ be an arbitrary object of $\cF(H_{n-1} \times (\bC^*)^m)$, and write $\tilde K$ for its image in $\mod_{CF^*(L_{n-1}^\bR \times \bR_+^m)}$. Write $\tilde K = \colim_D \tilde L$, where $L = L_{n-1}^\bR \times \bR_+^m$ and $\tilde L$ is $L$ as a module over itself. We may arrange that $D$ is a finite diagram of filtered diagrams. Now, because the right vertical arrow of Diagram \eqref{diag:cup_and_Koszul} is fully faithful, the colimit diagram $D_+ \to \mod_{CF^*(L_{n-1}^\bR \times \bR_+^m)}$ lifts to a diagram $D_+ \to \cF(H_{n-1} \times (\bC^*)^m)$, and the lift is a colimit diagram because it satisfies the universal property. In other words, $K$ is a finite colimit of filtered colimits of $L$, and this colimit is preserved by the right arrow.

    Next, note that both horizontal arrows are spherical functors \cite{AbouzaidSmith2019,Sylvan2019a}, so in particular they are left adjoints. This means they preserve the colimits $K = \colim_{D} L$ and $\tilde K = \colim_D \tilde L$. Using the fact that the left vertical map is a fully faithful embedding as above, we see that all arrows preserve the colimit $K = \colim_D L$.

    Now, let $K'$ be another object of $\cF(H_{n-1} \times (\bC^*)^m)$. Following the above, we can write $K' = \colim_{D'} L$, and this colimit is also preserved by all arrows in the diagram. We are thus interested in seeing that \eqref{diag:cup_and_Koszul} commutes for the morphism space $\hom(K, K')$. Since
    \begin{equation}
      \hom(K, K') = \hom(\colim_D L, \colim_{D'} L) = \lim_D \hom(L, \colim_{D'} L),
    \end{equation}
    and similarly in the other three categories, it is enough to show that the diagram commutes for morphism spaces of the form $\hom(L, \colim_{D'} L)$. To do this, we would again like to pull the colimit out of the $\hom$ complex. In the lower (module) categories that is possible because $\tilde L$ and its image in the lower left are compact objects. In the upper (Fukaya) categories, we use the tautological maps of cochain complexes
    \begin{equation}
      \label{eq:L_commute_colimit}
      \colim_{D'} \hom(L, L) \to \hom(L, \colim_{D'} L)
    \end{equation}
    (and similarly with $\cup L$) which become quasi-isomorphisms under the vertical arrows. On the other hand, the vertical arrows are fully faithful, so the map \eqref{eq:L_commute_colimit} is already a quasi-isomorphism. This reduces commutativity of Diagram \eqref{diag:cup_and_Koszul} to its commutativity on $L$, which is the Yoneda lemma.
  \end{proof}

\subsection{Compatibility of mirror symmetry with pushforward}
\label{sec:comp-mirr-symm}
We now turn our attention to the right square in Diagram \eqref{eq:side-by-side-squares-restriction}. Since the vertical maps in this diagram are equivalences, it suffices to prove that the diagram
  \begin{equation} \label{eq:adjoint_of_right_square}
  \begin{tikzcd}
  \cF(H_{n-1} \times \left(\bC^{*}\right)^m )  \ar[d]  &  \cF(\left(\bC^{*}\right)^{n} \times \left(\bC^{*}\right)^m, H_{n-1} \times \left(\bC^{*}\right)^m)  \ar[d] \ar[l] \\
 \Coh(\{\prod_{i=0}^{n-1} x_i = 0 \} \times \left(\bC^{*}\right)^m) & \Coh(\bC^{n} \times \left(\bC^{*}\right)^m) \ar[l],
  \end{tikzcd}
\end{equation}
given by the adjoint maps commutes. On the coherent sheaf side, the map is again given by restriction, so our proximate goal is to describe the adjoint of the cup functor on Fukaya categories.

\begin{lem}
  \label{lem:describe_adjoint_cup_as_restriction}
  If $M$ is a Liouville sector with fibre $F$, which is swappable in the sense of \cite{Sylvan2019a}, the adjoint to the cup functor $\cF(F) \to \cF(M)$ is given by a composition
  \begin{equation}
       \cF(M) \to   \cF(M^{(2)}) \to \cF(A_1 \times F) \cong \cF(F),
     \end{equation}
     where the first arrow is the inclusion of $M$ in a Liouville section $M^{(2)}$ obtained by doubling the stop, and the second arrow is a Viterbo restriction functor.
\end{lem}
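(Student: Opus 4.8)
The plan is to unwind the swappability formalism of \cite{Sylvan2019a} so as to present both the cup functor and its claimed adjoint entirely inside the doubled sector $M^{(2)}$, and then to interchange them using the adjunction between sector inclusions and Viterbo restrictions. First I would recall that swappability furnishes the doubled sector $M^{(2)}$: geometrically this is $M$ with a collar $A_1 \times F$ attached along a copy of the fibre, so that $M^{(2)}$ carries two parallel copies of the stop $F$ that are interchanged by a compactly supported Hamiltonian isotopy. The inclusion of $M$ as a Liouville subsector induces the first functor $\cF(M)\to\cF(M^{(2)})$, and Viterbo restriction to the collar induces the second $\cF(M^{(2)})\to\cF(A_1 \times F)$. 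Since $A_1$ is a disc with a single stop, its partially wrapped category is generated by one object with endomorphism algebra $\bk$, so the K\"unneth theorem gives $\cF(A_1 \times F)\cong\cF(F)$, which is the identification appearing in the statement.

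The heart of the argument is to recognise the composite $G \co \cF(M)\to\cF(F)$ as the left adjoint $\cap^{L}$ of $\cup$. I would first give cup a matching presentation in the doubled picture: following the definition of the cup functor in the swappable setting, $\cup$ is the composite of the collar identification $\cF(F)\cong\cF(A_1 \times F)$, the inclusion $\cF(A_1 \times F)\to\cF(M^{(2)})$ of the collar as a subsector, and the Viterbo restriction $\cF(M^{(2)})\to\cF(M)$ back onto $M$. With both functors expressed through the two sector inclusions $M\hookrightarrow M^{(2)}$ and $A_1 \times F\hookrightarrow M^{(2)}$ together with their Viterbo restrictions, I would invoke the localisation description of Viterbo restriction and the adjunction between a subsector inclusion and its restriction, as established in \cite{GanatraPardonShende2019,Sylvan2019a}. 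Passing to adjoints term by term turns the presentation of $\cup$ into the presentation of $G$ with inclusions and restrictions interchanged; since left adjoints are unique up to canonical isomorphism, this identifies $G$ with $\cap^{L}$.

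The main obstacle will be to ensure that no spurious shift or monodromy is introduced when the two sector inclusions are exchanged, that is, that the Hamiltonian swap of the two copies of $F$ realises exactly the cotwist of the spherical adjunction $\cap^{L}\dashv\cup$ rather than some autoequivalence differing from it. I expect this to be the only point demanding genuine Floer-theoretic input: I would pin it down by computing the unit and counit of the proposed adjunction on the generating objects $\cup\ell$, matching the strips counted by the Viterbo continuation maps with those defining the counit $\eta$, and using the swappability isotopy to fix the gradings and orientations, before verifying the triangle identities on these generators. Once the adjunction is established on generators, a colimit-propagation argument of the type used in Lemma \ref{lem:left-side} extends it to all of $\cF(M)$, completing the identification.
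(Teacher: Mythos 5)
Your proposed composite, taken literally, is the zero functor: if $i_M \co \cF(M) \to \cF(M^{(2)})$ is the honest subsector inclusion and $V$ is Viterbo restriction to the collar neighbourhood of the doubled stop, then $V \circ i_M = 0$, since a Lagrangian in the interior of $M$ is disjoint from (and orthogonal to) the collar. This is stated explicitly at the end of the paper's proof. The functor that actually computes the left adjoint is $\Sigma^{-1} V \circ \mathbf S_\phi \circ i_M$, where $\mathbf S_\phi$ is the wrap-once autoequivalence of $\cF(M^{(2)})$ induced by the swap of the stop: it is the swap that pushes an object of $M$ across the stop so that its Viterbo restriction to the collar is nonzero, and the positivity of this wrapping (via $\mathbf S_\phi \circ \cup_{\sigma^-} \cong \cup_{\sigma^+}$) is precisely what selects the \emph{left} adjoint rather than the right one. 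Your proposal demotes the swap to a correction of ``gradings and orientations''; in fact it is the central mechanism, and omitting it from the construction of the functor is a fatal gap rather than a bookkeeping issue.

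The route you propose for establishing adjointness is also not viable as stated. There is no general adjunction between a subsector inclusion and its Viterbo restriction (restriction is a localisation; its adjoint, when it exists, is a section onto the local objects, not the inclusion), so ``passing to adjoints term by term'' has no foundation and, even granting it, would not distinguish $\cap^L$ from $\cap^R$ -- these differ by the cotwist of the spherical adjunction, which is exactly the ambiguity the swap resolves. Finally, verifying the triangle identities ``on the generating objects $\cup\ell$'' cannot suffice: the image of $\cup$ is only one piece of the semiorthogonal decomposition $\cF(M^{(2)}) = \langle \cF(M), \cF(F)\rangle$, and the unit of the adjunction must be checked on all of $\cF(M)$, whose generators include cocores of $M$ that are not of the form $\cup\ell$. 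The paper avoids all of this by working directly with bimodules: it writes $\Gamma(\cup) \cong (\cup_{\sigma^-}, i_M)^*\Delta_{\cF(M^{(2)})}$, applies $\mathbf S_\phi$ to both entries, decomposes $\mathbf S_\phi \circ i_M$ against the semiorthogonal decomposition, and then applies $V$, landing on the adjoint graph $\Gamma^\dagger(\Sigma^{-1}V\mathbf S_\phi i_M)$ in one chain of quasi-isomorphisms, with no need for unit/counit verifications.
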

\begin{proof}
  We start with the graph bimodule $\Gamma(\cup)(\ell, L) = CW^*(L, \cup \ell)$, illustrated as the first picture of Figure \ref{fig:restriction_to_fibre-is-viterbo}. We will apply a sequence of bimodule quasi-isomorphisms to obtain a formula for the adjoint graph of the left adjoint $\cap^L$.
 \begin{figure}[h]
  \centering
  \begin{tikzpicture}
    \begin{scope}
    \draw[thick] (0,-2) -- (0,2);
\draw [dashed, thin] (0,-2)  -- (-3,-2) arc (270:90:2) -- (0,2);
    \draw[orange, thick]  (-1,-2) -- (-1,2);
    \node[label=right:{$\cup \ell$}] at (-1,0) (Ul) {};

    \draw [dashed, thin] (-3.75,0) circle (.5);
    \fill[blue!50]   (-3,1) .. controls (-2,1) and (-2,-1) ..  (-3,-1) .. controls (-3.5,-1) and ($(-3.75,0) + (-60:.75)$) .. ($(-3.75,0) + (-60:.5)$) arc (-60:-120:.5) ..  controls  ($(-3.75,0) + (-120:1)$) and (-4,-1.5) .. (-3, -1.5) .. controls (-2.5,-1.5) .. (-2,-2) -- (-1.5,-2) -- (-1.5,2) -- (-2,2) .. controls (-2.5,1.5) .. (-3,1.5) .. controls (-4,1.5) and ($(-3.75,0) + (120:1)$) .. ($(-3.75,0) + (120:.5)$) arc (120:60:.5) .. controls ($(-3.75,0) + (60:.75)$) and (-3.5,1) ..  cycle;
    \node[label=left:{$L$}] at (-2.5,0) (L) {};  
    \end{scope}
     \begin{scope}[shift={(7,0)}]
       \draw [dashed, thin] (0,-2)  -- (-3,-2) arc (270:90:2) -- (0,2);
       \draw[thick] (0,-2) -- (0,-.5);
       \draw[dashed,thin] (0,.5) arc (90:270:.5); 
\draw[thick] (0,2) -- (0,.5);

    \draw[orange, thick]  (-.25,-2) -- (240:.5);
   \node[label=left:{$\cup_- \ell$}] at (-.25,-1) (Ul+) {};
   \draw [dashed, thin] (-3.75,0) circle (.5);
   \fill[blue!50]   (-3,1) .. controls (-2,1) and (-2,-1) ..  (-3,-1) .. controls (-3.5,-1) and ($(-3.75,0) + (-60:.75)$) .. ($(-3.75,0) + (-60:.5)$) arc (-60:-120:.5) ..  controls  ($(-3.75,0) + (-120:1)$) and (-4,-1.5) .. (-3, -1.5) .. controls (-2.5,-1.5) .. (-2,-2) -- (-1.5,-2) -- (-1.5,2) -- (-2,2) .. controls (-2.5,1.5) .. (-3,1.5) .. controls (-4,1.5) and ($(-3.75,0) + (120:1)$) .. ($(-3.75,0) + (120:.5)$) arc (120:60:.5) .. controls ($(-3.75,0) + (60:.75)$) and (-3.5,1) ..  cycle;
 
\end{scope}

\begin{scope}[shift={(0,- 5)}]
  \draw [dashed, thin] (0,-2)  -- (-3,-2) arc (270:90:2) -- (0,2);
      \draw[dashed,thin] (0,.5) arc (90:270:.5); 
       \draw[thick] (0,-2) -- (0,-.5);

\draw[thick] (0,2) -- (0,.5);
       
 \draw[blue!50, thick]  (-.25,2) -- (120:.5);

    \draw[orange, thick]  (-.25,-2) -- (240:.5);
  
       \draw [dashed, thin] (-3.75,0) circle (.5);
       \fill[blue!50] (-3,1) .. controls (-2,1) and (-2,-1) ..  (-3,-1) .. controls (-3.5,-1) and ($(-3.75,0) + (-60:.75)$) .. ($(-3.75,0) + (-60:.5)$) arc (-60:-120:.5) ..  controls  ($(-3.75,0) + (-120:1)$) and (-4,-1.5) .. (-3, -1.5) .. controls (-2,-1.5) and (210:1) ..  (210:.5) arc (210:150:.5) .. controls (150:1) and (-2,1.5)    .. (-3,1.5) .. controls (-4,1.5) and ($(-3.75,0) + (120:1)$) .. ($(-3.75,0) + (120:.5)$) arc (120:60:.5) .. controls ($(-3.75,0) + (60:.75)$) and (-3.5,1) ..  cycle;

  \end{scope}
  \begin{scope}[shift={(7,- 5)}]
  \draw [dashed, thin] (0,-2)  -- (-3,-2) arc (270:90:2) -- (0,2);
      \draw[dashed,thin] (0,.5) arc (90:270:.5); 
       \draw[thick] (0,-2) -- (0,-.5);

\draw[thick] (0,2) -- (0,.5);
       
 \draw[orange, thick]  (-.25,2) -- (120:.5);
 \node[label=left:{$\mathbf S_\phi \cup_+ \ell$}] at (-.25,-1) (Ul+) {};
 
    \draw[blue!50, thick]  (-.25,-2) -- (240:.5);

    \draw [dashed, thin] (-3.75,0) circle (.5);
         \fill[blue!50] ($(-3,0) + (165:2)$) arc (165:195:2)  .. controls ($(-3,0) + (195:1.5)$) and  ($(-3.75,0) + (-60:2.5)$) .. ($(-3.75,0) + (-60:.5)$) arc (-60:-120:.5) ..  controls  ($(-3.75,0) + (-120:1.5)$) and  ($(-3.75,0) + (120:1.5)$)  .. ($(-3.75,0) + (120:.5)$) arc (120:60:.5) .. controls ($(-3.75,0) + (60:2.5)$) and ($(-3,0) + (165:1.5)$) .. ($(-3,0) + (165:2)$) -- cycle ;

  \end{scope}
  \end{tikzpicture}
  \caption{The vertical (orange) line represents the cup functor. The (light blue) region in the top left is the projection of an arbitrary Lagrangian $L$ underlying an object in $\cF(M)$.}
  \label{fig:restriction_to_fibre-is-viterbo}
\end{figure}
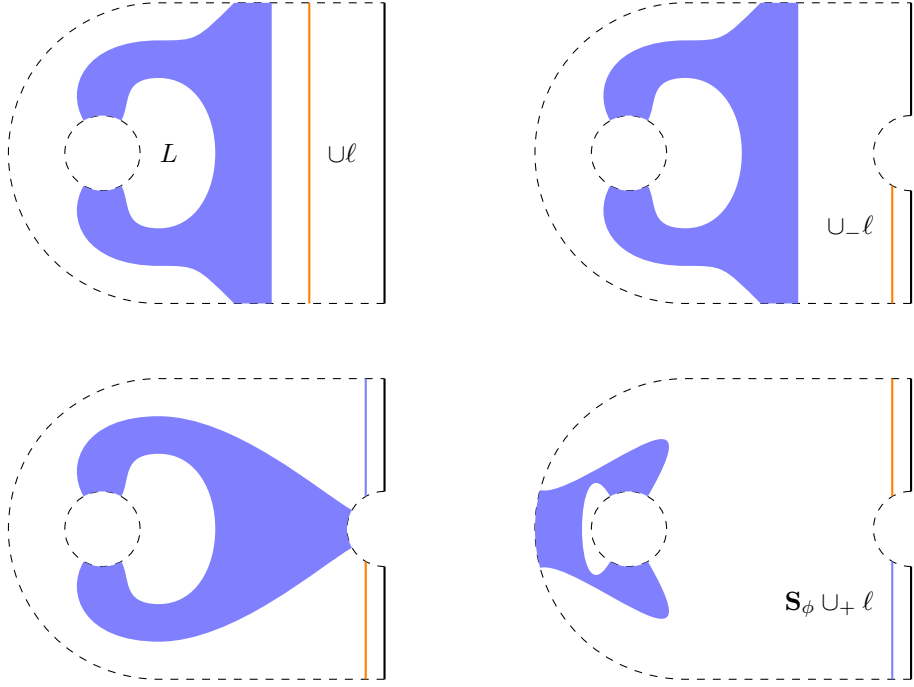
  First, we double the stop $\sigma$ to obtain a new sector $M^{(2)}$ with stops $\sigma^+$ and $\sigma^-$, which are positive and negative pushoffs of $\sigma$, respectively. In purely sectorial language, this is equivalent to gluing on a copy of $F \times A_2$, where $F$ is the fiber of $\sigma$ and $A_2$ is $\bC$ with three stops. By \cite[Cor.~3.11, see also proof of Thm.~1.3]{Sylvan2019a}, we have an equivalence
  \begin{equation}
    \Gamma(\cup) \cong (\cup_{\sigma_-}, i_M )^*\Delta_{\cF(M^{(2)})},
  \end{equation}
  where $i_M$ is the inclusion of $M$ into $M^{(2)}$ given by the gluing decomposition. This situation is illustrated in the second picture (upper right) of Figure \ref{fig:restriction_to_fibre-is-viterbo}.

  Next, by sphericality of $\cup$, we have a semiorthogonal decomposition
\begin{equation}
  \label{eq:cup_semiorthogonal_decomp}
    \cF(M^{(2)}) = \langle \cF(M), \cF(F) \rangle
\end{equation}
  with $\cF(F)$ the image of $\cup_{\sigma^+}$ and $\cF(M)$ the second orthogonal to the image of $i_M$. We are free to represent $i_M$ in these coordinates, see the third picture of Figure \ref{fig:restriction_to_fibre-is-viterbo}.

  Finally, we apply the wrap-once deformation $\phi \colon M^{(2)} \to M^{(2)}$ coming from the swap of $\sigma$, as shown in the last picture of Figure \ref{fig:restriction_to_fibre-is-viterbo}. Algebraically, this corresponds to taking second orthogonals, which means that we have an equivalence of bimodules
  \begin{align*}
    (\cup_{\sigma_-}, i_M)^*\Delta_{\cF(M^{(2)})}
      &\cong (\mathbf S_\phi \circ \cup_{\sigma_-}, \mathbf S_\phi \circ  i_M)^*\Delta_{\cF(M^{(2)})} \\
      &\cong (\cup_{\sigma^+}, \mathbf S_\phi \circ  i_M)^*\Delta_{\cF(M^{(2)})},
  \end{align*}
  where $\mathbf S_\phi$ is the autoequivalence of $\cF(M^{(2)})$ induced by $\phi$. Here, the positivity of the wrapping $\phi$ is expressed by the isomorphism $\mathbf S_\phi\circ \cup_{\sigma_-} \cong \cup_{\sigma_+}$; a negative wrapping would involve the monodromy. With $i_M$ expressed in the coordinates \eqref{eq:cup_semiorthogonal_decomp}, we see obtain a decomposition
  \begin{equation}
    \mathbf S_\phi \circ i_M \cong \mathrm{cone}(F_M \to F_-)
  \end{equation}
  with $F_-$ landing in the image of $\sigma^-$ and $F_M$ landing in the image of $i_M$. In particular, $F_M$ is left-orthogonal to $\cup_{\sigma^+}$, so Viterbo restriction $V$ to a neighborhood $U \cong F \times T^*[0,1]$ of $\sigma^- \cup \sigma^+$ gives an equivalence
  \begin{align*}
    (\cup_{\sigma^+}, \mathbf S_\phi \circ  i_M)^*\Delta_{\cF(M^{(2)})}
      &\cong (\cup_{\sigma^+}, F_-)^*\Delta_{\cF(M^{(2)})} \\
      &\cong (\cup_{\sigma^+}, V F_-)^*\Delta_{\cF(F \times T^*[0,1])} \\
      &\cong (\id_F, \Sigma^{-1}VF_-)^*\Delta_{\cF(F)} \\
      &= \Gamma^\dagger(\Sigma^{-1}VF_-) \\
      &\cong \Gamma^\dagger(\Sigma^{-1}V\mathbf S_\phi i_M).
  \end{align*}
  Here, $\Sigma$ is the stabilization equivalence $\cF(F) \cong \cF(F \times T^*[0,1])$, and we have used the fact that $V \circ i_M = 0$.
\end{proof}

We now perform the final computation of this section, which is in some sense the analogue of Lemma \ref{lem:Abouzaid-Auroux}:
\begin{lem}
  There is a commutative diagram of $A_\infty$ algebra maps
      \begin{equation}
      \begin{tikzcd}
        CF^*( \cap \left( \bR_+^n \times \bR_+^m   \right))  \ar[d] &  CF^*(\bR_+^n \times \bR_+^m) \ar[l]   \ar[d] \\
       \bZ[x_1, \ldots, x_n, y^{\pm}_1, \ldots, y_m^{\pm}]/\left( \prod x_j \right)  &   \bZ[x_1, \ldots, x_n, y^{\pm}_1, \ldots, y_m^{\pm}]  \ar[l] 
      \end{tikzcd}
    \end{equation}
    in which the vertical arrows are quasi-isomorphisms.
\end{lem}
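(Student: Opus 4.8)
This is the pushforward-side analogue of Lemma~\ref{lem:Abouzaid-Auroux}, computing the cap functor (the adjoint to the cup functor described in Lemma~\ref{lem:describe_adjoint_cup_as_restriction}) on the generating positive real locus. As in the proof of Lemma~\ref{lem:Abouzaid-Auroux}, the whole statement is cohomological: both rings are supported in degree $0$, so every higher term of an $A_\infty$ homomorphism between them vanishes for degree reasons, and it suffices to identify the induced map on cohomology with the quotient $\bZ[x_1,\dotsc,x_n,y_1^\pm,\dotsc,y_m^\pm] \to \bZ[x_1,\dotsc,x_n,y_1^\pm,\dotsc,y_m^\pm]/(\prod x_j)$. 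The two rings in the statement are already the ones supplied by Lemma~\ref{lem:positive_real_locus_generates} on the right and, on the left, by Lemma~\ref{lem:HMS-fibre} once we have identified $\cap(\bR_+^n \times \bR_+^m)$.

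First I would reduce to a single pair-of-pants factor. By Gammage--Shende \cite{GammageShende2017} the sector $\left((\bC^*)^{n+m}, H_{n-1} \times (\bC^*)^m\right)$ is the product of the Landau--Ginzburg sector $\left((\bC^*)^n, H_{n-1}\right)$ attached to the potential $1 + \sum_{j=1}^n y_j$ with the unstopped factor $(\bC^*)^m$, and its fibre is $H_{n-1} \times (\bC^*)^m$. Since the stop lies entirely in the first factor, the cap functor splits as $\cap \otimes \id_{(\bC^*)^m}$; this is the adjoint counterpart of the K\"unneth compatibility of the cup functor recorded before Lemma~\ref{lem:left-side}, and at the cohomological level it is just the statement that the ring map on a tensor product is the tensor product of the factor maps. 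The spectator factor contributes $\bZ[y_1^\pm,\dotsc,y_m^\pm]$ with the identity map, so everything reduces to computing the cap functor on $\bR_+^n \subset (\bC^*)^n$ and showing that the resulting map is the quotient $\bZ[x] \to \bZ[x]/(\prod x_j)$.

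Next I would identify the object $\cap\, \bR_+^n$. By Lemma~\ref{lem:describe_adjoint_cup_as_restriction}, after doubling the stop and applying the wrap-once deformation, the cap functor is computed by Viterbo restriction to a neighbourhood $F \times T^*[0,1]$ of the (doubled) fibre. The Lagrangian $\bR_+^n$ fibres over a ray in the base of the Landau--Ginzburg model with fibre the positive real locus $L^+_{n-1} = \{\sum y_j = \lambda-1,\ y_j > 0\} \subset H_{n-1}$, so the wrapped-once Lagrangian meets this neighbourhood in a copy of $L^+_{n-1}$ and its Viterbo restriction is $L^+_{n-1}$ itself. The verification that the induced grading and the (trivial) local system agree identifies $\cap\, \bR_+^n \cong L^+_{n-1}$ in $\cF(H_{n-1})$, so that by Lemma~\ref{lem:HMS-fibre} its endomorphism ring is $\bZ[x]/(\prod x_j)$, as required for the left vertical identification.

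Finally, and this is where the real content lies, I would identify the induced ring map. Viterbo restriction is a unital ring homomorphism, so the cap functor induces a ring map $\phi \colon \bZ[x] \to \bZ[x]/(\prod x_j)$, and it is enough to show that $\phi(x_i) = \bar x_i$ for each $i$. Each generator $x_i$ is represented by a chord of $\bR_+^n$ localised at the $i$-th puncture of the fibre, transverse to the Viterbo hypersurface, and such chords restrict to the corresponding generating chords of $L^+_{n-1}$; the product $\prod_j x_j$ then maps to $\prod_j \bar x_j = 0$, the vanishing being precisely the relation cutting out $\Pi_{n-1}$ that is furnished by Lekili--Polishchuk \cite{LekiliPolishchuk2020} and underlies Lemma~\ref{lem:HMS-fibre}. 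The delicate point throughout --- the main obstacle --- is this last disc-counting bookkeeping: one must control the chords and their images under restriction carefully enough to be certain that no generator is rescaled and that the only new relation introduced is $\prod_j x_j = 0$. Should a residual ambiguity in the matching of generators remain, it can be removed by the Noetherian surjectivity argument of Proposition~\ref{prop:mirror_map_iso}, since $\phi$ is a surjection between two copies of a Noetherian ring. Intrinsic formality then upgrades this cohomological computation to the asserted commutative diagram of $A_\infty$ algebra maps.
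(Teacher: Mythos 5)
Your setup matches the paper's: reduce to cohomology by intrinsic formality, use a K\"unneth/product argument to strip off the $(\bC^*)^m$ spectator factor, and identify $\cap\,\bR_+^n$ with the positive real locus of $H_{n-1}$ so that the two vertical arrows are the ones already supplied by Lemmas \ref{lem:positive_real_locus_generates} and \ref{lem:HMS-fibre}. The problem is the step you yourself flag as ``where the real content lies.'' Your argument there --- that each generator $x_i$ is ``represented by a chord localised at the $i$-th puncture'' which ``restricts to the corresponding generating chord of $L^+_{n-1}$'' --- is an assertion, not a proof: the cap functor is not literal restriction of chords but the composite of Lemma \ref{lem:describe_adjoint_cup_as_restriction} (double the stop, wrap once, then Viterbo restrict), and the generators of $HF^*(L^+_{n-1})$ are localised near punctures only in the Lekili--Polishchuk symmetric-product model, which still has to be matched against the Hanlon--Hicks chords of $\bR_+^n$ in $(\bC^*)^n$. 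Your proposed fallback does not close this gap: the Noetherian argument of Proposition \ref{prop:mirror_map_iso} shows that a surjective endomorphism of a Noetherian ring is injective, but here $\phi$ is a map between two \emph{different} rings, and knowing that $\bZ[x,y^\pm]/\ker\phi$ is abstractly isomorphic to $\bZ[x,y^\pm]/(\prod x_j)$ does not force $\ker\phi = (\prod x_j)$, nor does it pin down $\phi$ on generators --- and the specific map is exactly what the gluing bimodule in Proposition \ref{prop:comparison_restriction_functors} requires.

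The paper resolves this by a detection argument rather than direct chord matching: working in the symmetric-product model, it composes the cap map with further Viterbo restrictions to the subdomains $\Omega_k \subset H_{n-1}$ given by neighbourhoods of all but the $k$-th puncture, where the restriction sends $\bZ[z_1,\dotsc,z_n]/(\prod z_j)$ onto the localisation killing $z_k$. A tropical degeneration of the pair of pants (as in \cite{Abouzaid2006}) identifies the composite embedding $\Omega_k \to H_{n-1} \to (\bC^*)^n$ with a standard coordinate inclusion $(\bC^*)^{n-1} \to (\bC^*)^n$, so the composite $HF^*(\bR_+^n) \to HF^*(\cap\,\bR_+^n) \to HF^*(\bR_+^{n-1})$ is the evident map $x_i \mapsto z_i$. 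Since $\bZ[z_1,\dotsc,z_n]/(\prod z_j) \to \bigoplus_k \bZ[z_1^\pm,\dotsc,z_{k-1}^\pm,z_{k+1}^\pm,\dotsc,z_n^\pm]$ is injective, this determines the original map and shows it is the quotient by $\prod x_j$. You would need to supply this (or an equivalent identification of the map on generators) to complete your argument.
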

\begin{proof}[Sketch of proof]
  By intrinsic formality, it suffices to compute the maps at the level of cohomology. An argument analogous to that of Proposition \ref{prop:kunneth_cup_compat} shows that the K\"unneth isomorphism commutes with Viterbo restriction as the cohomological level, so it suffices to show that we have a commutative diagram
 \begin{equation}
      \begin{tikzcd}
 HF^*( \cap \bR_+^n )  \ar[d] &         HF^*(\bR_+^n) \ar[l]   \ar[d] \\
     \bZ[x_1, \ldots, x_n]/\left( \prod x_j \right) &     \bZ[x_1, \ldots, x_n]  \ar[l]   
      \end{tikzcd}
    \end{equation}
    It is convenient at this stage to use the model of $H_{n-1}$ given as the $n-1$\st symmetric product of $\bC$ punctured at $n$ points, following \cite{LekiliPolishchuk2020}. In this model, we have an isomorphism
    \begin{equation}
            HF^*( \cap \bR_+^n )  \cong  \bZ[z_1, \ldots, z_n]/\left( \prod z_j \right), 
    \end{equation}
    moreover, each generator $z_j$ is localised near the $j$\th puncture. In particular, the inclusion of the product of small neighbourhoods of all but the $k$\th puncture yields the embedding of a Liouville subdomain
    \begin{equation}
      \Omega_k \to H_{n-1}      
    \end{equation}
    and the induced Viterbo restriction map is
    \begin{equation}
           \bZ[z_1, \ldots, z_n]/\left( \prod z_j \right) \to  \bZ[z^{\pm}_1, \ldots,z_{k-1}^{\pm} , z_k, z_{k+1}^{\pm}, \cdots,  z^{\pm}_n]/ z_k.
    \end{equation}
By considering a tropical degeneration of the pair of pants, as in \cite{Abouzaid2006}, we see that the composite embedding
    \begin{equation}
         \Omega_k \to H_{n-1} \to  (\bC^*)^{n}   
    \end{equation}
    is Liouville isotopic to an embedding
\begin{equation}
         \Omega_k \to (\bC^*)^{n-1} \to  (\bC^*)^{n}.
       \end{equation}
       This identifies the composite
       \begin{equation}
           HF^*( \bR_+^n ) \to     HF^*( \cap \bR_+^n ) \to    HF^*( \bR_+^{n-1} )    
         \end{equation}
         as the map taking $x_i$ to $z_i$. We conclude the desired result using the fact that the map
         \begin{equation}
             \bZ[z_1, \ldots, z_n]/\left( \prod z_j \right) \to  \bigoplus_{k} \bZ[z^{\pm}_1, \ldots, z^{\pm}_{k-1}, z_{k+1}^{\pm}, \ldots, z^{\pm}_n].          
         \end{equation}
is injective. 
\end{proof}

\appendix

  \section{Compatibility of K\"unneth maps with cup functors}
\label{sec:comp-k:unn-maps}

The purpose of this appendix is to prove the compatibility of K\"unneth isomorphisms with cup functors. To formulate the desired result, consider a Liouville sector $M_1$, a Liouville domain $M_2$, and embedded Lagrangians $L_i, L'_i \subset M_i$ which are invariant under the respective Liouville flows: 
\begin{prop}
  \label{prop:kunneth_cup_compat}
There is a cohomological K\"unneth isomorphism
  \begin{equation}
 HF^*(L_1, L'_1) \otimes  HF^*(L_2, L'_2)  \cong HF^*(L_1 \times L_2, L'_1 \times L'_2)
  \end{equation}
  which intertwines the cup functors in the sense that, if $\sigma_1$ is a boundary component of $M_1$, and $L'_1 = \cup \ell'_1$, then the following diagram commutes
  \begin{equation}
    \begin{tikzcd}
       HF^*(\ell_1, \ell'_1) \otimes  HF^*(L_2, L'_2)  \ar[d] \ar[r] &  HF^*(\ell_1 \times L_2, \ell'_1 \times L'_2) \ar[d] \\
       HF^*(L_1, L'_1) \otimes  HF^*(L_2, L'_2)  \ar[r] & HF^*(L_1 \times L_2, L'_1 \times L'_2).
    \end{tikzcd}
  \end{equation}
 \end{prop}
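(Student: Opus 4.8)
The plan is to reduce the proposition to the explicit description of the cup--functor map already used in the proof of Lemma \ref{lem:Abouzaid-Auroux}: the map $HF^*(\ell_1,\ell_1') \to HF^*(\cup\ell_1, \cup\ell_1')$ factors as the composition of the \emph{local} Floer isomorphism, which identifies $HF^*(\ell_1,\ell_1')$ with the local Floer cohomology of the pair $\cup\ell_1, \cup\ell_1'$ computed in a collar $F_1 \times T^*[0,1]$ of the stop $\sigma_1$, followed by the continuation map from local to global wrapping. First I would record the purely geometric fact that the cup construction commutes with exterior products: since $M_1 \times M_2$ is a Liouville sector with fibre $F_1 \times M_2$ whose stop is the product of $\sigma_1$ with $M_2$, and since the cup construction only modifies the collar direction lying in the $M_1$ factor, there is an identification of objects $\cup^{M_1 \times M_2}(\ell_1 \times L_2) \simeq (\cup \ell_1) \times L_2$, and likewise for the primed Lagrangians. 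This is precisely what exhibits the right-hand vertical arrow of the square as a cup--functor map, so that both vertical arrows become instances of the same construction.

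With the objects identified, the commutativity is checked one factor of the composition at a time, and because $M_2$ is a domain it merely rides along. For the local Floer isomorphism, the collar of the product sector is $(F_1 \times T^*[0,1]) \times M_2$, so the Künneth isomorphism of \cite{GanatraPardonShende2019} identifies the local Floer cohomology of $\cup^{M_1 \times M_2}(\ell_1 \times L_2)$ with $HF^*_{\mathrm{loc}}(\cup\ell_1,\cup\ell_1') \otimes HF^*(L_2, L_2')$; composing with the $M_1$--local isomorphism tensored with the identity on $HF^*(L_2,L_2')$ gives the upper square. For the continuation map, I would take the wrapping Hamiltonian on $M_1 \times M_2$ to be supported in the $M_1$ direction, so that the continuation map from local to global wrapping is the $M_1$--continuation map tensored with the identity on $HF^*(L_2,L_2')$; Künneth is then tautologically compatible with this split datum, yielding the lower square. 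Concatenating the two squares gives the assertion.

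The hard part will be the bookkeeping needed to guarantee that the Künneth isomorphism appearing in the local step and the one appearing in the continuation step are literally the same map, so that the two commuting squares may be concatenated; this reduces to arranging that the interpolating Floer data (almost complex structures and wrapping Hamiltonians) can be chosen of split product type throughout the collar region to which the curves defining the cup map are confined, so that every relevant moduli space genuinely factors as a product. As elsewhere in the paper, intrinsic formality permits working entirely at the cohomological level, so no higher coherence of these identifications is required and the argument reduces to an equality of maps between cohomology groups. I would close by noting that the same scheme, run with the roles of the cup and cap functors exchanged and Viterbo restriction replacing the continuation map, supplies the compatibility of Künneth with $\cap$ invoked in Section \ref{sec:comp-mirr-symm}.
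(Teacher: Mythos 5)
Your proposal has the right skeleton --- the final step of the paper's argument is indeed the observation that $\cup^{M_1\times M_2}$ agrees with $\cup_{\sigma_1}\times \id_{\cF(M_2)}$ on objects, together with the fact that the cup functor on morphisms is realised by the subcomplex of chords living over the zero section of the collar, so that the comparison reduces to checking that continuation strips starting in the fibre stay in the fibre. But you have misjudged where the weight of the proof lies. What you set aside as ``bookkeeping'' --- arranging that the wrapping data on $M_1\times M_2$ can be taken of split product type wherever the relevant curves live --- is the mathematical core of the statement, and it does not follow from choosing data carefully in a collar. The obstruction is global: a split Hamiltonian $H_1+H_2$ with each $H_i$ linear is not linear at infinity on the product (so it is not admissible for the wrapped category of the completion), while an admissible linear Hamiltonian on the product is not split (so neither the K\"unneth map nor the confinement of the cup-functor curves to the fibre is automatic). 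Citing the K\"unneth isomorphism of \cite{GanatraPardonShende2019} as a black box does not help, because the content of the proposition is precisely that \emph{that} isomorphism, in some concrete model, intertwines the cup functors; you need a model of the K\"unneth map explicit enough to compare with the continuation maps defining $\cup$.

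The paper resolves this by constructing, for each slope bound $m$, an interpolating Hamiltonian $H_{12}^{(m)}$ (Equation \eqref{eq:product_Hamiltonian}) that is linear along a large level set but agrees with $H_1^{(m)}+H_2^{(m)}$ outside a compact region, and by proving (Lemma \ref{lem:product_ham}) that after shrinking $\varepsilon$ all time-$j$ chords for $j\le m$ are split. One then needs the doubly-indexed diagram \eqref{diag:wrapping_hams} of continuation and homotopy-method quasi-isomorphisms, together with integrated maximum principle arguments, to show that the resulting colimit computes wrapped Floer cohomology of the completion and that the identifications are compatible with the continuation maps. None of this appears in your proposal, and without it the two squares you wish to concatenate are not both defined on the same model of $HF^*(L_1\times L_2, L_1'\times L_2')$. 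Your closing remark about running the same scheme for the cap functor is consistent with how the paper uses the result in Section \ref{sec:comp-mirr-symm}, but it inherits the same gap.
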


 There are three technical restrictions in the above theorem, which are all made for convenience: the restriction to cohomological information, to the setting where $M_2$ is a Liouville manifold, and to the case where all Lagrangians are globally invariant under the Liouville flow. The first allows us to avoid the technicalities of chain-level constructions, the second to avoid discussing Liouville sectors with corners, and the last ensures that products are conical at infinity.
 \begin{rem}
   We may replace the condition that the Lagrangians are globally conical by the condition that they be conical at infinity by applying the following trick from \cite{AbouzaidSeidel2010}: take the product of $M_i$ with $T^* S^1$, the product of $L_i$ and $L'_i$ with different cotangent fibres, and then equip the product $M_i \times T^* S^1$ with a Liouville structure for which these Lagrangians are globally conical. The Floer cohomology groups in $M_i \times T^* S^1$ decompose according to the winding number along the circle, and those of winding number $0$ agree with the Floer groups in $M_i$. Alternatively, we could take the product with the cotangent bundle of the interval, considered as a Liouville sector, at the expense of requiring additional explanation for how to address the appearance of corners.
 \end{rem}

\subsection{K\"unneth isomorphism}
\label{sec:kunneth-isomorphism}

We associate to a Liouville sector $M$ its horizontal completion
\begin{equation}
  \hat{M} \equiv M \cup \sigma \times T^* (-\infty,0] /  \sim,
\end{equation}
where $\sigma = \partial M / \bR$ is the stop of $M$.

We equip $\hat{M}$ with a Liouville form $\hat{\lambda}$ whose restriction to $\sigma \times T^* (-\infty,0] $ is the product of the Liouville form on $\sigma$ with a Liouville form on $T^* (-\infty, 0]$ whose flow is shown in Figure \ref{fig:Liouville-flow}. The boundary projection $\pi \colon \nu\partial M \to T^*[0,1)$ extends to a surjective projection
\begin{equation}
  \hat{\pi} \colon \widehat{ \nu \partial M } \to ( -\infty, 1 ).
\end{equation}

\begin{figure}[h]
  \centering
  \begin{tikzpicture}
    \draw (0,-2) -- (0,2);
    \foreach \i in {0, ..., 4}
      {
    \draw[->, thick] (-\i*.5,1) -- ++ (90:1);
    \draw[->, thick] (-\i*.5,-1) -- ++ (-90:1);
    \filldraw[black] (-\i*.5,0) circle (2pt);
  };
  \draw[->, thick] (-2.5,1) -- ++ (135:1);
  \draw[->, thick] (-2.5,0) -- ++ (180:1);
  \draw[->, thick] (-2.5,-1) -- ++ (-135:1);
    \foreach \i in {2, ..., 4}
      {
    \draw[->, thick] (-2,0) ++ (90+\i*30:2) -- ++ (90+\i*30:2);
  };

\end{tikzpicture} 
  \caption{The chosen Liouville flow on $T^* (-\infty, 0]$. The black dots represent the vanishing of the vector field on the subset of the $0$-section given by the interval $[-1,0]$.}
  \label{fig:Liouville-flow}
\end{figure}
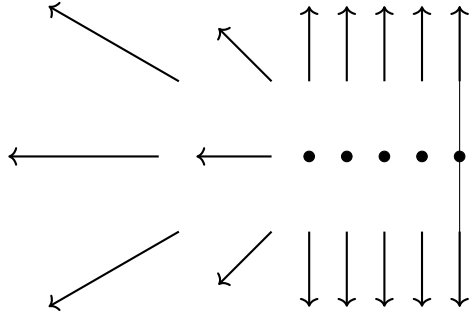

Since $\hat{M}$ is a complete Liouville manifold, we can construct a wrapped Fukaya category $\mathcal W(\hat M)$ using linear Hamiltonians and contact type almost complex structures \cite{AbouzaidSeidel2010} such that:
\begin{itemize}
  \item all Lagrangians are transverse to the locus $\hat{\pi}^{-1}\left( (-1, 1) \right)$ living over the zero section, and
  \item the Hamiltonian vector field $X_H$ of the wrapping Hamiltonians is tangent to each fiber $\hat{\pi}^{-1}(q)$ for $q \in (-1, 1)$, and
  \item The almost complex structure fixes each of the above fibers.
\end{itemize}

The wrapped Fukaya category of the sector $M$ is defined to be the non-full subcategory $\mathcal W(M) \subset \mathcal W(\hat{M})$, where
\begin{itemize}
  \item ahe Lagrangians are required to live in the interior of $M$, and
  \item the morphisms are generated by Hamiltonian chords supported in $M$.
\end{itemize}
That this is a subcategory follows from positivity of intersections of $H$-perturbed holomorphic disks with the fiber $\hat{\pi}^{-1}(0)$. This is the linear version of the category $\mathcal W_\mathrm{quad}(M)$ in \cite{Sylvan2019a}.

\subsubsection{Hamiltonians on product sectors}

We are interested in the situation where $M$ is a product $M_1 \times M_2$, where we would like to identify on the nose the Fukaya algebras associated to split and cylindrical Floer data. 

Fix a bump function $a \colon \bR \to [0,1]$ which vanishes for $r \le 2$, equals $1$ for $r \in [\beta_-, \beta_+]$, and vanishes again for $r$ sufficiently large. We will construct Hamiltonians on $M_1 \times M_2$ which are linear on a large cylinder and split outside a compact set by rescaling via the function $h(r) = a(r) \log(r)$.

\pgfdeclarelayer{bg}    % declare background layer
\pgfsetlayers{bg,main}  % set the order of the layers (main is the standard layer)

\begin{figure}[h]
  \centering
  \begin{tikzpicture}
\begin{axis}[
  axis x line=left, axis y line=left, ylabel style={rotate=270},
  ymin=0, ymax=4, ytick={0,2}, ylabel=$H_2$,
  xmin=0, xmax=4, xtick={0,2}, xlabel=$H_1$
]
\draw[thin] (axis cs: 2,0) -- (axis cs: 2,2) -- (axis cs: 0,2);
\draw[thin, blue] (axis cs: 0,0) -- (axis cs: 4,1);
\draw[thin, blue] (axis cs: 0,0) -- (axis cs: 1,4);
\draw[black, fill=gray] (axis cs: 0,0) --(axis cs: 0,0.2) -- (axis cs: 0.2,0.2) -- (axis cs: 0.2,0) --(axis cs: 0,0);
\begin{pgfonlayer}{bg}
\fill[yellow, opacity=50] (axis cs: 2,0) -- (axis cs: 2,.5) -- (axis cs: 4,1) -- (axis cs: 4,0) -- cycle ;
\fill[yellow, opacity=50] (axis cs: 0,2) -- (axis cs: .5,2) -- (axis cs: 1,4) -- (axis cs: 0,4) -- cycle ;  
\end{pgfonlayer}
\end{axis}
\end{tikzpicture} 
\caption{The decomposition of $\hat{M}_1 \times \hat{M}_2$}
  \label{fig:decompose_product}
\end{figure}
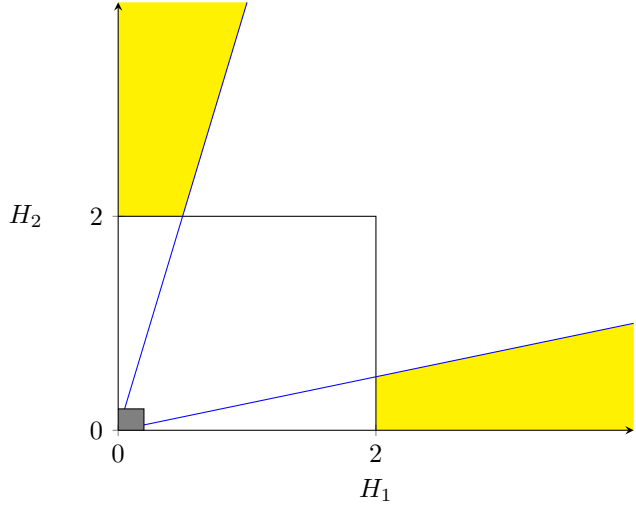

Concretely, let $H_1$ and $H_2$ be linear exhausting positive Hamiltonians on $M_1$ and $M_2$, respectively, so that
\begin{equation} \label{eq:bound_derivative_Hi-Liouville}
    0 \leq Z_iH_i \leq H_i,
\end{equation}
where the second inequality is an equality whenever $\varepsilon \leq H_i$ (i.e. $H_i$ is linear outside a compact set). Define
\begin{equation}
  \label{eq:product_Hamiltonian}
  H_{12}(x_1, x_2) =
  e^{h(H_1)} \cdot H_2\left(\phi_{Z_2}^{-h(H_1)}x_2\right)
  +
  e^{h(H_2)} \cdot H_1\left(\phi_{Z_1}^{-h(H_2)}x_1\right).
\end{equation}
This construction satisfies the following key properties:
\begin{enumerate}
  \item Provided $\varepsilon < 1$ and $\beta_+ - \beta_-$ is sufficiently large relative to $\frac1\varepsilon$, $H_{12}$ is linear along $H_{12}^{-1}(\beta_+)$.
  \item The region where $H_{12}$ fails to agree with $H_1 + H_2$ is compact, and is disjoint from the locus where $\varepsilon < \frac{H_2}{H_1} < \frac1\varepsilon$.
\end{enumerate}
Henceforth, we will assume $\varepsilon < 1$, and in fact we will shrink $\varepsilon$ to further improve the behavior of $H_{12}$.

Our goal is to choose $H_1$ and $H_2$ so that all chords of $H_{12}$ live in the region $H_{12} < \beta_+$. To that end, note that the region where $H_{12}$ is not split decomposes into two disjoint regions, where $H_1 \ge \max\left\{2, \frac{H_2}\varepsilon\right\}$ and where $H_2 \ge \max\left\{2, \frac{H_1}\varepsilon\right\}$, see Figure \ref{fig:decompose_product}. These two regions are essentially identical, so we will focus on the first.

Let us compute $X_{H_{12}}$. First, the $M_1$ component of $dH_{12}$ is given by
\begin{multline}
  \left[ e^{h(H_1)} \cdot \left( H_2 - Z_2H_2 \right)\left(\phi_{Z_2}^{-h(H_1)}x_2\right) \cdot h'(H_1) \right]dH_1 \\
  + e^{h(H_2)} \cdot dH_1 \circ d\phi_Z^{-h(H_2)}
\end{multline}
and similarly with the $M_2$ component. In the region $H_1 \ge \max\left\{ 2, \frac{H_2}\varepsilon \right\}$, this simplifies to
\begin{equation}
  \label{eq:d_split_Hamiltonian}
  \left[ 1 + e^{h(H_1)}h'(H_1) \cdot \left( H_2 - Z_2H_2 \right)\left(\phi_{Z_2}^{-h(H_1)}x_2\right) \right] dH_1,
\end{equation}
while the $M_2$ component simplifies to
\begin{equation}
  e^{h(H_1)} \cdot dH_2 \circ d\phi_Z^{-h(H_1)}.
\end{equation}

Let us examine Equaction \eqref{eq:d_split_Hamiltonian}. The coefficient $e^{h(H_1)}h'(H_1)$ is globally bounded independently of $\varepsilon$. By shrinking $\varepsilon \rightsquigarrow \varepsilon'$ and correspondingly rescaling $H_i \rightsquigarrow \frac{\varepsilon'}{\varepsilon}H_i \circ \phi_{Z_i}^{\log\frac{\varepsilon}{\varepsilon'}}$, we can make $\lVert H_2-Z_2H_2\rVert_{C^0}$ as small as we like, so that Equation \eqref{eq:d_split_Hamiltonian} approaches $dH_1$. Since $X_{H_1}$ has no chords in the linear region $H_1 > \varepsilon$, neither will $H_{12}$ once $\varepsilon$ is small enough.

After further shrinking $\varepsilon$, the same argument shows that we can arrange that $mX_{H_{12}}$ also has no chords in the region where $H_{12} \ge \beta_+$. We have proved

\begin{lemma}
  \label{lem:product_ham}
  Let $H_1$ and $H_2$ be linear exhausting positive Hamiltonians on $M_1$ and $M_2$ which have no integer length chords from $\partial^\infty L_i$ to  $\partial^\infty L'_i$, and which satisfy Equation~\eqref{eq:bound_derivative_Hi-Liouville}. Let $m \in \bN$. Then, for any sufficiently small $\varepsilon > 0$, there are linear exhausting positive Hamiltonians $H_1^{(m)}$ on $M_1$ and $H_2^{(m)}$ on $M_2$, together with a positive exhausting (non-linear) Hamiltonian $H_{12}^{(m)}$ on $M_1 \times M_2$ such that:
  \begin{enumerate}
    \item $H_i^{(m)}$ is a rescaling of $H_i$ which is linear in the region where it is bigger than $\varepsilon$, and satisfies $H_i^{(m)} \leq H_i $ everywhere.
    \item $H_{12}^{(m)}$ is linear along ${H_{12}^{(m)}}^{-1}(\beta_+)$.
    \item $H_{12}^{(m)}$ agrees with $H_1^{(m)} + H_2^{(m)}$ except on the regions $\max\left\{2, \frac{H_2^{(m)}}\varepsilon\right\} \le H_1^{(m)} \le \beta_{\max}$ and $\max\left\{2, \frac{H_1^{(m)}}\varepsilon\right\} \le H_2^{(m)} \le \beta_{\max}$. Here, $\beta_{\max}$ is a value above which the bump function $a$ vanishes.
    \item For $j = 1, \dotsc, m$, all time $1$ chords of $jX_{H_{12}^{(m)}}$ from $L_1 \times L_2$ to  $L'_1 \times L'_2$ are split. In particular, they lie in the region where both $H_1^{(m)}$ and $H_2^{(m)}$ are less than $\varepsilon$.
  \end{enumerate}
  \qed
\end{lemma}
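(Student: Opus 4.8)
The plan is to organize the construction and estimates carried out in the paragraphs above into the four itemized conclusions, with a \emph{single} choice of $\varepsilon$ made uniform over the finite range $1 \le j \le m$. I would take $H_i^{(m)}$ to be the rescaling of $H_i$ that is linear precisely where $H_i^{(m)} > \varepsilon$, obtained as in the text by composing with the Liouville flow $\phi_{Z_i}$ and scaling, and set $H_{12}^{(m)}$ to be the product Hamiltonian \eqref{eq:product_Hamiltonian} built from $H_1^{(m)}$ and $H_2^{(m)}$. With this choice, conclusions (1)--(3) are essentially definitional: (1) is the homogeneity of $H_i$ in its linear cone together with the elementary comparison $H_i^{(m)} \le H_i$; (2) is the first property recorded below \eqref{eq:product_Hamiltonian}, guaranteed once $\beta_+ - \beta_-$ is large relative to $1/\varepsilon$; and (3) merely names the locus where one of $h(H_1^{(m)})$, $h(H_2^{(m)})$ is nonzero while the diagonal cancellation $H_{12}^{(m)} = H_1^{(m)} + H_2^{(m)}$ fails, which is exactly the pair of off-diagonal wedges of Figure \ref{fig:decompose_product}.

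The substance is conclusion (4). Since $H_{12}^{(m)}$ is split outside $R_1 \cup R_2$, where $R_1 = \{\max\{2, H_2^{(m)}/\varepsilon\} \le H_1^{(m)} \le \beta_{\max}\}$ and $R_2$ is its mirror, it suffices to show that no time-$j$ chord ($1 \le j \le m$) from $L_1 \times L_2$ to $L'_1 \times L'_2$ meets $R_1 \cup R_2$ nor escapes into the conical end $\{H_{12}^{(m)} \ge \beta_+\}$; a chord avoiding all of these lies in the split locus and is therefore a product of chords in the two factors. The exclusion of $R_1$ rests on the computation already performed: on $R_1$ the $M_1$-component of $dH_{12}^{(m)}$ is the positive multiple $c\,dH_1^{(m)}$ of \eqref{eq:d_split_Hamiltonian}, with $c = 1 + e^{h(H_1^{(m)})}h'(H_1^{(m)})(H_2^{(m)} - Z_2 H_2^{(m)})$; because the product symplectic form splits, the $M_1$-projection of $X_{H_{12}^{(m)}}$ is $c\,X_{H_1^{(m)}}$, so the projection of any flow line is, after reparametrization, a flow line of $X_{H_1^{(m)}}$. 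As $H_1^{(m)} \ge 2 > \varepsilon$ throughout $R_1$, this projected trajectory lives in the linear cone of $H_1^{(m)}$, where $X_{H_1^{(m)}}$ is a positive rescaling of the Reeb field; a chord would then realize a Reeb chord from $\partial^\infty L_1$ to $\partial^\infty L'_1$ whose length is the reparametrized elapsed time $\int_0^1 jc\,dt$. The mirror argument excludes $R_2$ using the hypothesis on $L_2, L'_2$, and the same mechanism applied to the single linear Hamiltonian along $(H_{12}^{(m)})^{-1}(\beta_+)$ (with slope controlled by $\beta_+ - \beta_-$) excludes the conical end, which is the content of the last paragraph preceding the statement.

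The main obstacle is to upgrade the phrase \emph{once $\varepsilon$ is small enough} into a single $\varepsilon$ that works simultaneously for every $j \in \{1,\dots,m\}$, and to make the near-integrality of $\int_0^1 jc\,dt$ actually forbid a chord. For this I would use that $e^{h(H_1^{(m)})}h'(H_1^{(m)})$ is bounded independently of $\varepsilon$, while the rescaling forces $\lVert H_2^{(m)} - Z_2 H_2^{(m)}\rVert_{C^0} = O(\varepsilon)$ (this difference is supported in the core $\{H_2^{(m)} < \varepsilon\}$), so that $c \to 1$ uniformly and hence $\lvert \int_0^1 jc\,dt - j\rvert \le m\,\lVert c-1\rVert_{C^0}$ for all $j \le m$. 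The Reeb chord lengths from $\partial^\infty L_1$ to $\partial^\infty L'_1$ that are at most $m(1+\lVert c-1\rVert_{C^0})$ form a finite set avoiding the integers by hypothesis, so there is a uniform gap $g_0 > 0$ between these lengths and the integers $1, \dots, m$; choosing $\varepsilon$ small enough that $m\,\lVert c-1\rVert_{C^0} < g_0$ keeps the elapsed time of every putative chord inside this gap and thus off the list of Reeb chord lengths, for all $j \le m$ at once. Because only finitely many $j$ are involved, one such $\varepsilon$ exists, which rules out all chords in $R_1 \cup R_2$ and completes conclusion (4).
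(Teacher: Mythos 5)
Your proposal is correct and follows essentially the same route as the paper, which presents this lemma as the summary of the immediately preceding discussion (``We have proved''): conclusions (1)--(3) are read off from the construction in Equation \eqref{eq:product_Hamiltonian} and the properties listed below it, and conclusion (4) comes from the computation in Equation \eqref{eq:d_split_Hamiltonian} showing the $M_1$-component of $dH_{12}$ is a near-unit multiple of $dH_1$ on the non-split wedge, so that a chord meeting that wedge would project to a Reeb chord of nearly integer length. Your quantitative gap argument (uniform over $j \le m$, using the bound on $e^{h}h'$ and $\lVert H_2 - Z_2H_2\rVert_{C^0} = O(\varepsilon)$) is a legitimate sharpening of the paper's softer ``once $\varepsilon$ is small enough,'' with the only cosmetic caveat that one should invoke closedness of the Reeb chord length spectrum rather than literal finiteness.
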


Fix once and for all the model Hamiltonians $H_i$. Consider a sequence $(m, \varepsilon_m)_{m\in \bN}$ and corresponding Hamiltonians $H_1^{(m)}$, $H_2^{(m)}$, and $H_{12}^{(m)}$.
We consider the homotopy-commutative diagram
\begin{equation}
  \label{diag:wrapping_hams}
  \begin{tikzcd}[column sep=5]
    CF^*(L_1 \times L_2,L'_1 \times L'_2 ; H_{12}^{(1)}) \ar[r, "\cong"] & CF^*(L_1 \times L_2,L'_1 \times L'_2 ; H_{12}^{(2)}) \ar[r, "\cong"] \ar[d] & CF^*(L_1 \times L_2,L'_1 \times L'_2 ; H_{12}^{(3)}) \ar[r] \ar[d] & \cdots \ar[d] \\
                                                       & CF^*(L_1 \times L_2,L'_1 \times L'_2 ; 2H_{12}^{(2)}) \ar[r, "\cong"] & CF^*(L_1 \times L_2,L'_1 \times L'_2 ; 2H_{12}^{(3)}) \ar[r] \ar[d] & \cdots \ar[d] \\
                                                       & & CF^*(L_1 \times L_2,L'_1 \times L'_2 ; 3H_{12}^{(3)}) \ar[r]
                                                      & \cdots \ar[d] \\
                                                       & & & \cdots 
  \end{tikzcd}
\end{equation}
in which the vertical maps are continuation maps associated to increasing slope \cite{AbouzaidSeidel2010}. The horizontal maps are constructed by the homotopy method \cite{FukayaOhOhtaOno2009}:   since all chords of $jH_{12}^{(m)}$ are split, they are products of $X_{H_1^{(m)}}$-chords and $X_{H_2^{(m)}}$-chords of length $j$, which are rescalings of length $j$ chords of $X_{H_1}$ and $X_{H_2}$. In particular, further rescaling $H_1$ and $H_2$ by shrinking $\varepsilon$ doesn't change the length $j$ chords. The formula in Equation \eqref{eq:product_Hamiltonian} thus gives a family $H_{12}^t$ which interpolates between $H_{12}^{(m)}$ and $H_{12}^{(m+1)}$ with no birth/death of chords. In particular, we obtain canonical identifications of the generators of $CF^*(L_1 \times L_2,L'_1 \times L'_2; H_{12}^t)$ for varying $t$.   The homotopy method quasi-isomorphisms are then obtained by counting time-ordered arrangements of exceptional index $0$ strips, where the generators at different times are identified as above (see Figure \ref{fig:Homotopy_method}). For homotopy-commutativity, one similarly counts time-ordered arrangements of index $-1$ in which one of the strips is an exceptional continuation map, exactly as in the construction of the linear term of the Viterbo restriction map in \cite{AbouzaidSeidel2010}.

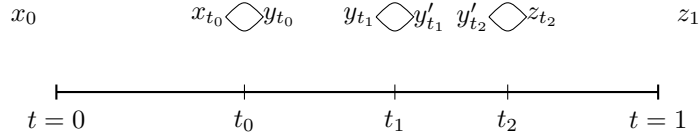
\begin{figure}[h]
  \centering
  \begin{tikzpicture}
    \draw[thick] (-4,0) -- (4,0);
    \draw[thick] (-4,-.1) -- (-4,.1);
    \draw[thick] (4,-.1) -- (4,.1);
      \node[label=below:{$t=0$}] at (-4,0) {};
      \node[label=below:{$t=1$}] at (4,0) {};

   \begin{scope}[shift={(0,1)}]
     \node[label=left:{$x_0$}] at (-4,0) {};
     \begin{scope}[shift={(-1.5,0)}]
         \node[label=left:{$x_{t_0}$}] at (0,0) {};
     \draw (-.25,0) .. controls (0,.25) .. (.25,0) .. controls (0,-.25) .. (-.25,0);
     \node[label=right:{$y_{t_0}$}] at (0,0) {};
     \begin{scope}[shift={(0,-1)}]
        \draw[thin] (0,-.1) -- (0,.1);
   \node[label=below:{$t_0$}] at (0,0) {};
     \end{scope}
     \end{scope}
   \begin{scope}[shift={(.5,0)}]
         \node[label=left:{$y_{t_1}$}] at (0,0) {};
     \draw (-.25,0) .. controls (0,.25) .. (.25,0) .. controls (0,-.25) .. (-.25,0);
     \node[label=right:{$y'_{t_1}$}] at (0,0) {};
      \begin{scope}[shift={(0,-1)}]
        \draw[thin] (0,-.1) -- (0,.1);
   \node[label=below:{$t_1$}] at (0,0) {};
     \end{scope}
     \end{scope}
     \begin{scope}[shift={(2,0)}]
         \node[label=left:{$y'_{t_2}$}] at (0,0) {};
     \draw (-.25,0) .. controls (0,.25) .. (.25,0) .. controls (0,-.25) .. (-.25,0);
     \node[label=right:{$z_{t_2}$}] at (0,0) {};
       \begin{scope}[shift={(0,-1)}]
        \draw[thin] (0,-.1) -- (0,.1);
   \node[label=below:{$t_2$}] at (0,0) {};
     \end{scope}
   \end{scope}
   
          \node[label=right:{$z_1$}] at (4,0) {};
   \end{scope}
\end{tikzpicture} 
  \caption{A representation of the time-ordered strip count in the homotopy method, consisting of three rigid solutions to the Floer equation, occuring at times $t_0 < t_1 < t_2$. The labels $x_t$, $y_t$, $y'_t$, and $z_t$ are those for the time-$1$ chords of $H_t$.}
  \label{fig:Homotopy_method}
\end{figure}

\begin{lem}
  The colimit of the homology groups in Diagram \eqref{diag:wrapping_hams} computes wrapped Floer cohomology in the completion of $M_1 \times M_2$.
\end{lem}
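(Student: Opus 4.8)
The plan is to recognise the colimit of \eqref{diag:wrapping_hams} as a filtered colimit of Floer groups along a cofinal family of Hamiltonians whose slopes tend to infinity, with continuation maps as the connecting maps, and then to appeal to the fact \cite{AbouzaidSeidel2010} that the wrapped Floer cohomology of the completion of $M_1 \times M_2$ (which carries the product Liouville structure of $\hat M_1 \times \hat M_2$) is computed by any such family.

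I would begin with the combinatorics of the diagram. Its indexing poset is $\{(j,m) : 1 \le j \le m\}$, with the horizontal maps (increasing $m$) the homotopy-method quasi-isomorphisms produced in Lemma \ref{lem:product_ham} and the vertical maps (increasing $j$) continuation maps of increasing slope. This poset is filtered, the horizontal maps are isomorphisms, and the diagonal $\{(m,m)\}_m$ is cofinal, so the colimit equals $\varinjlim_m HF^*(L_1 \times L_2, L'_1 \times L'_2; m H_{12}^{(m)})$, with connecting map from $(m,m)$ to $(m{+}1,m{+}1)$ given by the composite of a homotopy-method isomorphism $m H_{12}^{(m)} \rightsquigarrow m H_{12}^{(m+1)}$ (fixed slope, varying $\varepsilon$) followed by the slope-raising continuation $m H_{12}^{(m+1)} \to (m{+}1) H_{12}^{(m+1)}$. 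A standard comparison of iterated continuation maps identifies this composite, up to homotopy, with a single continuation map for the increase from $m H_{12}^{(m)}$ to $(m{+}1)H_{12}^{(m+1)}$.

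Next I would check that $\{m H_{12}^{(m)}\}_m$ is an admissible cofinal family. By part (2) of Lemma \ref{lem:product_ham} each $H_{12}^{(m)}$ is linear along $\{H_{12}^{(m)} = \beta_+\}$; scaling by $m$ preserves this hypersurface and multiplies its slope by $m$, so the slopes tend to infinity and the family is cofinal among all slopes. By part (4), every time-$1$ chord of $m X_{H_{12}^{(m)}}$ is split and lies in the compact region where both $H_1^{(m)}, H_2^{(m)} < \varepsilon$, which sits strictly inside the cylinder $\{H_{12}^{(m)} = \beta_+\}$. The contact-type hypersurface at level $\beta_+$ then serves as a barrier: by the integrated maximum principle of \cite{AbouzaidSeidel2010}, applied in the fibered form used above to define $\mathcal W(\hat M)$, every Floer solution and every continuation trajectory with asymptotics among these chords is confined to that region. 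Consequently $HF^*(\dots; m H_{12}^{(m)})$ is well defined and, below the action level $\beta_+$, agrees with the Floer cohomology of a genuinely linear Hamiltonian of the same slope on $\hat M_1 \times \hat M_2$, so the diagonal telescope is identified with the standard one.

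The main obstacle is precisely this confinement-and-comparison step at infinity: the product of two conical ends is not conical, and $H_{12}^{(m)}$ is genuinely non-linear in the corner directions, where it equals the split Hamiltonian $H_1^{(m)} + H_2^{(m)}$. The delicate point is to run the maximum principle using only linearity along the single hypersurface $\{H_{12}^{(m)} = \beta_+\}$, verifying that no Floer trajectory escapes through the non-linear corner. I would handle this with the same no-escape estimate that underlies the linear term of the Viterbo restriction map in \cite{AbouzaidSeidel2010}, using the split structure at infinity to rule out crossings of the barrier. Once compactness is secured, independence of wrapped Floer cohomology from the choice of cofinal defining family yields the asserted identification of the colimit with wrapped Floer cohomology in the completion of $M_1 \times M_2$.
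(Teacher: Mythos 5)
Your overall strategy coincides with the paper's: restrict to the cofinal diagonal of Diagram \eqref{diag:wrapping_hams}, use the integrated maximum principle along the contact hypersurface $\{H_{12}^{(m)} = \beta_+\}$ to replace each $mH_{12}^{(m)}$ by a genuinely linear Hamiltonian of the same slope (the paper's linear extrapolation $m\tilde H_{12}^{(m)}$), observe via the bound $\tilde H_{12}^{(m)} \ge \max(H_1^{(m)}, H_2^{(m)})$ that the slopes tend to infinity, and then match the connecting maps of the telescope with continuation maps so that independence of the cofinal family from \cite{AbouzaidSeidel2010} applies.

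The one step you dispose of too quickly is the identification of the horizontal arrows with continuation maps. Those arrows are homotopy-method isomorphisms --- counts of time-ordered configurations of exceptional index-$0$ strips --- and not continuation maps, so ``a standard comparison of iterated continuation maps'' does not apply to the composite you write down. Moreover the direction is reversed: shrinking $\varepsilon$ decreases the Hamiltonians (Lemma \ref{lem:product_ham}, part (1)), so the monotone continuation map runs against the horizontal arrows, and what must actually be shown is that the \emph{inverse} of each horizontal isomorphism agrees on cohomology with the continuation map from $mH_{12}^{(m+1)}$ up to $mH_{12}^{(m)}$. The paper establishes this with a bespoke parametrised moduli space --- a continuation solution up to some time $t_0$ followed by exceptional Floer strips at later times, as in Figure \ref{fig:Homotopy_method_continuation} --- whose compactness again rests on the maximum principle applied in the two factors. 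A second, smaller omission: to form the telescope one needs $m_{i+1}H_{12}^{(m_{i+1})} > m_i H_{12}^{(m_i)}$ outside a compact set, which requires passing to a suitable subsequence $m_i$ rather than indexing over all of $\bN$. Neither point derails your plan, but both must be supplied before the final appeal to \cite{AbouzaidSeidel2010} is legitimate.
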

\begin{proof}
   Write $\tilde H_{12}^{(m)}$ for the linear extrapolation of $H_{12}^{(m)}$, which one can construct by replacing $\beta_+$ with infinity. Note that, for $j = 1, \dotsc, m$, $CF^*(L_1 \times L_2; jH_{12}^{(m)})$ and $CF^*(L_1 \times L_2; \tilde jH_{12}^{(m)})$ are canonically isomorphic. Indeed, they have the same chords, and by the integrated maximum principle \cite{AbouzaidSeidel2010} along ${H_{12}^{(m)}}^{-1}(\beta_+)$, all holomorphic disks live in the region where the Hamiltonians agree.

We begin by observing that
\begin{equation}
  H_{12}^{(m)} \ge \tilde H_{12}^{(m)} \ge \max(H_1^{(m)}, H_2^{(m)})
\end{equation}
globally. The right hand side is a (non-smooth) linear exhausting positive Hamiltonian which is eventually $m$-independent outside an arbitrarily small neighborhood of the skeleton of $M_1 \times M_2$. In particular, $m\tilde H_{12}^{(m)}$ has slope tending to infinity with $m$, so
\begin{equation} \label{eq:colimit_HW_completed}
  \lim_\rightarrow HF^*(L_1\times L_2,L'_1\times L'_2 ; m_i\tilde H_{12}^{(m_i)})
\end{equation}
computes the wrapped Floer cohomology of $L_1\times L_2$ and $L'_1\times L'_2$, where the values of $m_i$ appearing in the colimit are chosen so that $m_{i+1}H_{12}^{(m_{i+1})} > m_i H_{12}^{(m_i)}$ outside a compact set.

It now suffices to compare these continuation maps with the maps obtained from Diagram \eqref{diag:wrapping_hams}. First we use the integrated maximum principle again to see that the colimit in Equation \eqref{eq:colimit_HW_completed} is isomorphic to the colimit
\begin{equation} \label{eq:colimit_HW_split}
  \lim_\rightarrow HF^*(L_1\times L_2,L'_1\times L'_2 ; m_i H_{12}^{(m_i)}),
\end{equation}
where we recall that $H_{12}^{(m_i)}$ is split at infinity. Since the vertical maps are continuation maps, their compatibility with composition reduces the problem to showing that the (inverse) of the horizontal maps agrees on cohomology with continuation maps, which are well-defined because the functions $H_{12}^{t}$ decrease with $t$ as a consequence of Equation \eqref{eq:bound_derivative_Hi-Liouville}. 

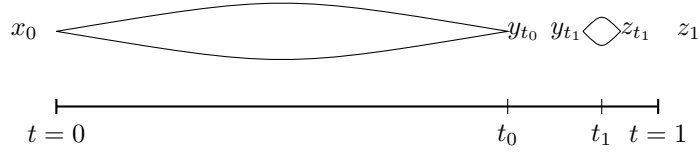
\begin{figure}[h]
  \centering
  \begin{tikzpicture}
    \draw[thick] (-4,0) -- (4,0);
    \draw[thick] (-4,-.1) -- (-4,.1);
    \draw[thick] (4,-.1) -- (4,.1);
      \node[label=below:{$t=0$}] at (-4,0) {};
      \node[label=below:{$t=1$}] at (4,0) {};

   \begin{scope}[shift={(0,1)}]
     \node[label=left:{$x_0$}] at (-4,0) {};
        \draw (-4,0) .. controls (-1,.5) .. (2,0) .. controls (-1,-.5) .. (-4,0);
     \node[label=right:{$y_{t_0}$}] at (1.75,0) {};
     \begin{scope}[shift={(0,-1)}]
        \draw[thin] (2,-.1) -- (2,.1);
   \node[label=below:{$t_0$}] at (2,0) {};
     \end{scope}

   \begin{scope}[shift={(3.25,0)}]
         \node[label=left:{$y_{t_1}$}] at (0,0) {};
     \draw (-.25,0) .. controls (0,.25) .. (.25,0) .. controls (0,-.25) .. (-.25,0);
     \node[label=right:{$z_{t_1}$}] at (0,0) {};
      \begin{scope}[shift={(0,-1)}]
        \draw[thin] (0,-.1) -- (0,.1);
   \node[label=below:{$t_1$}] at (0,0) {};
     \end{scope}
     \end{scope}
          \node[label=right:{$z_1$}] at (4,0) {};
   \end{scope}

\end{tikzpicture} 
  \caption{A representation of the time-ordered strip count in the comparison between the homotopy method and the continuation map, consisting of a rigid solutions to the continuation map from $t=0$ to $t=t_0$, followed by a solution to the Floer equation, occuring at some time $t_0 < t_1$.}
  \label{fig:Homotopy_method_continuation}
\end{figure}

This is achieved by using the composition of continuation maps from $mH_{12}^{(n)}$ to  $mH_{12}^{(t)}$, followed by homotopy from $mH_{12}^{(t)}$  to $mH_{12}^{(m)}$, associated to $m \leq t \leq n$ (see Figure \ref{fig:Homotopy_method_continuation}). The moduli space of solutions to these equations is compact by an application of the maximum principle in the two factors, and the associated parametrised moduli space defines the desired homotopy.

\end{proof}

Finally, we complete this appendix by proving it main result:
\begin{proof}[Proof of Proposition \ref{prop:kunneth_cup_compat}]
  In the setting of Hamiltonian wrapping, the cup functor $\cup_\sigma \colon \cF(\sigma) \to \cF(M)$ sends a Lagrangian $\ell \subset \sigma$ to its product with a fibre $\ell \times T_{\frac12}^*[0,1) \subset M$. At the level of morphisms, it sends $CW^*(\ell, \ell')$ to the piece of $CW^*(\ell\times T_{\frac12}^*[0,1), \ell'\times T_{\frac12}^*[0,1))$ living along the zero section of $[0,1)$ \cite[Construction 2.18]{Sylvan2019a}. The assumptions on our wrapping Hamiltonians imply that this piece is a subalgebra and that it is isomorphic to $CW^*(\ell, \ell')$ itself.

  To prove the proposition, it is enough to check it for finite slope. Since all of the Hamiltonians $H_{12}$, $\tilde H_{12}$, and $H_1+H_2$ preserve the fibre over $(q=\frac12, p=0)$ near the stop of $M_1\times M_2$, we obtain a diagram
  \begin{equation}
  \begin{tikzcd}
    CF^*(\ell_1 \times L_2 , \ell'_1 \times L'_2 ; j\tilde H_{12}^{(m)}) \ar[d, "="]\ar[r, "\cup"] & CF^*(\cup\ell_1 \times L_2, \cup \ell'_1 \times L'_2 ; j\tilde H_{12}^{(m)}) \ar[d, "="]  \\
    CF^*(\ell_1 \times L_2, \ell'_1 \times L'_2 ; jH_{12}^{(m)}) \ar[d, "\cong"] \ar[r, "\cup"] &   CF^*(\cup\ell_1 \times L_2, \cup \ell'_1 \times L'_2; jH_{12}^{(m)}) \ar[d, "\cong"] \\
  CF^*(\ell_1 \times L_2, \ell'_1 \times L'_2; jH_1^{(m)} + jH_2^{(m)}) \ar[r, "\cup"]  & CF^*(\cup\ell_1 \times L_2, \cup \ell'_1 \times L'_2; jH_1^{(m)} + jH_2^{(m)})
  \end{tikzcd}
  \end{equation}
  where we have abused notation and referred to the Hamiltonians in the fibre and the total space in the same way. The top vertical arrows are the canonical isomorphisms above, while the bottom vertical arrows are continuation quasi-isomorphisms coming from the fact that $H_{12}^{(m)} - (H_1^{(m)} + H_2^{(m)})$ is compactly supported. The top square commutes because all arrows are identifications of (sub)complexes. The bottom square commutes because the strips contributing to the continuation map which start in the fibre must stay in the fibre, for the same reason that the generators living in the fibre form a subcomplex.

  Since the right-hand cup functor agrees with $\cup_{\sigma_1} \times \id_{\cF(M_2)}$, the result follows.
\end{proof}

\bibliographystyle{plain}
\bibliography{large-bib}

\def\cprime{$'$}
\begin{thebibliography}{10}

\bibitem{AbouzaidSeidel2010}
M.~Abouzaid and P.~Seidel.
\newblock An open string analogue of {V}iterbo functoriality.
\newblock {\em Geom. Topol.}, 14:627--718, 2010.

\bibitem{Abouzaid2006}
Mohammed Abouzaid.
\newblock Homogeneous coordinate rings and mirror symmetry for toric varieties.
\newblock {\em Geom. Topol.}, 10:1097--1156, 2006.
\newblock [Paging previously given as 1097--1157].

\bibitem{Abouzaid2008}
Mohammed Abouzaid.
\newblock On the {F}ukaya categories of higher genus surfaces.
\newblock {\em Adv. Math.}, 217(3):1192--1235, 2008.

\bibitem{Abouzaid2011}
Mohammed Abouzaid.
\newblock A cotangent fibre generates the {F}ukaya category.
\newblock {\em Adv. Math.}, 228(2):894--939, 2011.

\bibitem{Abouzaid2012a}
Mohammed Abouzaid.
\newblock Nearby {L}agrangians with vanishing {M}aslov class are homotopy
  equivalent.
\newblock {\em Inventiones mathematicae}, 189:251--313, 2012.

\bibitem{AbouzaidAuroux2021}
Mohammed Abouzaid and Denis Auroux.
\newblock Homological mirror symmetry for hypersurfaces in
  {$(\mathbb{C}^*)^n$}, 2021.
\newblock in preparation.

\bibitem{AbouzaidAurouxEfimovKatzarkovOrlov2013}
Mohammed Abouzaid, Denis Auroux, Alexander~I. Efimov, Ludmil Katzarkov, and
  Dmitri Orlov.
\newblock Homological mirror symmetry for punctured spheres.
\newblock {\em J. Amer. Math. Soc.}, 26(4):1051--1083, 2013.

\bibitem{AbouzaidAurouxKatzarkov2016}
Mohammed Abouzaid, Denis Auroux, and Ludmil Katzarkov.
\newblock Lagrangian fibrations on blowups of toric varieties and mirror
  symmetry for hypersurfaces.
\newblock {\em Publ. Math. Inst. Hautes \'{E}tudes Sci.}, 123:199--282, 2016.

\bibitem{AbouzaidSmith2019}
Mohammed Abouzaid and Ivan Smith.
\newblock Khovanov homology from {F}loer cohomology.
\newblock {\em J. Amer. Math. Soc.}, 32(1):1--79, 2019.

\bibitem{ChanPomerleanoUeda2016}
Kwokwai Chan, Daniel Pomerleano, and Kazushi Ueda.
\newblock Lagrangian torus fibrations and homological mirror symmetry for the
  conifold.
\newblock {\em Comm. Math. Phys.}, 341(1):135--178, 2016.

\bibitem{ChantraineDimitroglouGhigginiGolovko2019}
Baptiste Chantraine, Georgios~Dimitroglou Rizell, Paolo Ghiggini, and Roman
  Golovko.
\newblock Geometric generation of the wrapped fukaya category of weinstein
  manifolds and sectors, 2019.

\bibitem{EliashbergGromov1991}
Yakov Eliashberg and Mikhael Gromov.
\newblock Convex symplectic manifolds.
\newblock In {\em Several complex variables and complex geometry, {P}art 2
  ({S}anta {C}ruz, {CA}, 1989)}, volume~52 of {\em Proc. Sympos. Pure Math.},
  pages 135--162. Amer. Math. Soc., Providence, RI, 1991.

\bibitem{FukayaOhOhtaOno2009}
Kenji Fukaya, Yong-Geun Oh, Hiroshi Ohta, and Kaoru Ono.
\newblock {\em Lagrangian intersection {F}loer theory: anomaly and obstruction.
  {P}art {I}}, volume~46 of {\em AMS/IP Studies in Advanced Mathematics}.
\newblock American Mathematical Society, Providence, RI; International Press,
  Somerville, MA, 2009.

\bibitem{Gammage2021}
Benjamin Gammage.
\newblock Local mirror symmetry via syz, 2021.

\bibitem{GammageLe2021}
Benjamin Gammage and Ian Le.
\newblock Mirror symmetry for truncated cluster varieties, 2021.

\bibitem{GammageShende2017}
Benjamin Gammage and Vivek Shende.
\newblock Mirror symmetry for very affine hypersurfaces, 2021.

\bibitem{GanatraPardonShende2019}
Sheel Ganatra, John Pardon, and Vivek Shende.
\newblock Sectorial descent for wrapped fukaya categories, 2019.

\bibitem{GanatraPomerleano2021}
Sheel Ganatra and Daniel Pomerleano.
\newblock A log {PSS} morphism with applications to {L}agrangian embeddings.
\newblock {\em J. Topol.}, 14(1):291--368, 2021.

\bibitem{HanlonHicks2021}
Andrew Hanlon and Jeff Hicks.
\newblock Functoriality and homological mirror symmetry for toric varieties,
  2021.

\bibitem{Kuznetsov2016}
Alexander Kuznetsov.
\newblock Derived categories view on rationality problems.
\newblock In {\em Rationality problems in algebraic geometry}, volume 2172 of
  {\em Lecture Notes in Math.}, pages 67--104. Springer, Cham, 2016.

\bibitem{Lazarev2019}
Oleg Lazarev.
\newblock Geometric and algebraic presentations of weinstein domains, 2019.

\bibitem{LekiliPolishchuk2020}
Yank\i Lekili and Alexander Polishchuk.
\newblock Homological mirror symmetry for higher-dimensional pairs of pants.
\newblock {\em Compos. Math.}, 156(7):1310--1347, 2020.

\bibitem{Mikhalkin2004}
Grigory Mikhalkin.
\newblock Decomposition into pairs-of-pants for complex algebraic
  hypersurfaces.
\newblock {\em Topology}, 43(5):1035--1065, 2004.

\bibitem{Orlov1992}
D.~Orlov.
\newblock Projective bundles, monoidal transformations, and derived categories
  of coherent sheaves.
\newblock {\em Izv. Ross. Akad. Nauk Ser. Mat.}, 56:852--862, 1992.

\bibitem{Pomerleano2021}
Daniel Pomerleano.
\newblock Intrinsic mirror symmetry and categorical crepant resolutions, 2021.

\bibitem{Seidel2013}
Paul Seidel.
\newblock Notes on categorical dynamics and symplectic topology.

\bibitem{Seidel2002}
Paul Seidel.
\newblock Fukaya categories and deformations.
\newblock In {\em Proceedings of the {I}nternational {C}ongress of
  {M}athematicians, {V}ol. {II} ({B}eijing, 2002)}, pages 351--360. Higher Ed.
  Press, Beijing, 2002.

\bibitem{Seidel2008}
Paul Seidel.
\newblock {\em Fukaya categories and {P}icard-{L}efschetz theory}.
\newblock Zurich Lectures in Advanced Mathematics. European Mathematical
  Society (EMS), Z\"{u}rich, 2008.

\bibitem{Seidel2011}
Paul Seidel.
\newblock Homological mirror symmetry for the genus two curve.
\newblock {\em J. Algebraic Geom.}, 20(4):727--769, 2011.

\bibitem{SeidelSmith2005}
Paul Seidel and Ivan Smith.
\newblock The symplectic topology of {R}amanujam's surface.
\newblock {\em Comment. Math. Helv.}, 80(4):859--881, 2005.

\bibitem{Sheridan2011}
N.~Sheridan.
\newblock On the homological mirror symmetry conjecture for pairs of pants.
\newblock {\em J. Differential Geom.}, 89:271--367, 2011.

\bibitem{Sylvan2019}
Zachary Sylvan.
\newblock On partially wrapped {F}ukaya categories.
\newblock {\em J. Topol.}, 12(2):372--441, 2019.

\bibitem{Sylvan2019a}
Zachary Sylvan.
\newblock Orlov and viterbo functors in partially wrapped fukaya categories,
  2019.

\end{thebibliography}

\end{document}